\documentclass[reqno]{amsart}
\usepackage{amscd}

\def\beq{\begin{equation}}
\def\eeq{\end{equation}}
\def\ba{\begin{array}}
\def\ea{\end{array}}

\def\R{\mathbb R}

\def\nn{\nonumber}

\def\la{\langle}
\def\ra{\rangle}

\def \ds{\displaystyle}
\def \vs{\vspace*{0.1cm}}

\def\a{\alpha}

\def\bee{\begin{eqnarray}}
\def\beee{\begin{eqnarray*}}
\def\eee{\end{eqnarray}}
\def\eeee{\end{eqnarray*}}
\def\nn{\nonumber}
\def\ba{\begin{array}}
\def\ea{\end{array}}

%


%
\def\slashii#1{\setbox0=\hbox{$#1$}             
   \dimen0=\wd0                                 
   \setbox1=\hbox{\sl/} \dimen1=\wd1            
   \ifdim\dimen0>\dimen1                        
      \rlap{\hbox to \dimen0{\hfil\sl/\hfil}}   
      #1                                        
   \else                                        
      \rlap{\hbox to \dimen1{\hfil$#1$\hfil}}   
      \hbox{\sl/}                               
   \fi}                                         %
%
\def\slashiii#1{\setbox0=\hbox{$#1$}#1\hskip-\wd0\hbox to\wd0{\hss\sl/\/\hss}}
%


\newcommand{\A}{{\mathcal A}}

\newcommand{\C}{{\mathbf C}}

\newcommand{\N}{{\mathcal N}}



\textwidth=150mm

\textheight=230mm   
\topmargin=-0.4in
\oddsidemargin=+0.2in
\evensidemargin=+0.2in

\newtheorem{thm}{Theorem}[section]
\newtheorem{lm}[thm]{Lemma}
\newtheorem{prop}[thm]{Proposition}

\theoremstyle{definition}
\newtheorem{rem}[thm]{Remark}

\newtheorem{df}[thm]{Definition}

\theoremstyle{remark}

\begin{document}
\pagestyle{plain}
\date{\today}

\title{Energy quantization for a singular super-Liouville boundary value problem}

\thanks{}
\author{J\"urgen Jost, Chunqin Zhou, Miaomiao Zhu}

\address{Max Planck Institute for Mathematics in the Sciences, Inselstr. 22, 04103 Leipzig, Germany}
\email{jjost@mis.mpg.de}

\address{Department of Mathematics and MOE-LSC, Shanghai Jiaotong University, 200240, Shanghai, P. R. China}
\email{cqzhou@sjtu.edu.cn}

\address{School of Mathematical Sciences, Shanghai Jiao Tong University, Dongchuan Road 800, 200240, Shanghai, P. R. China
\newline
Max Planck Institute for Mathematics in the Sciences, Inselstr. 22,	04103 Leipzig,	Germany
}
\email{mizhu@sjtu.edu.cn}

\begin{abstract}{In this paper, we develop the blow-up analysis and establish the energy quantization for solutions to super-Liouville type equations on Riemann surfaces with conical singularities at the boundary. In other problems in geometric analysis, the blow-up analysis usually strongly utilizes conformal invariance, which yields a Noether current from which strong estimates can be derived. Here, however, the conical singularities destroy conformal invariance. Therefore,
we develop another, more general,  method that uses the vanishing of the  Pohozaev constant for such solutions to deduce the removability of boundary singularities.}
\end{abstract}

\maketitle

{\bf Keywords:} Super-Liouville equation, Pohozaev constant, Conical singularity, Blow-up, Energy identity, Boundary value problem, Chiral boundary condition.

{\bf 2010 Mathematics Subject Classification:} 35J60, 35A20, 35B44.

\

\section{Introduction}
Many problems with a noncompact symmetry group, like the conformal group, are limit cases where the Palais-Smale condition no longer applies, and therefore, solutions may blow up at isolated singularities, see for instance \cite{Lions}. Therefore, a blow-up analysis is needed, and this has become one of the fundamental tools in the geometric calculus of variations. This usually depends on the fact that the invariance yields an associated Noether current whose algebraic structure can be turned into estimates.  In the case of  conformal invariance  this Noether current is  a holomorphic quadratic differential. For harmonic map type problems,  finiteness of the energy functional in question implies that that differential is in $L^1$. This then can be used to obtain fundamental  estimates. For other problems, however, like (super-) Liouville equations, finiteness of the energy functional is not sufficient to get the $L^1$ bound of that differential and
hence this is an extra assumption leading to the removability of local singularities (Prop 2.6, \cite{JWZZ1}).

But for (super-) Liouville equations  on surfaces with conical singularities, we do not even have conformal invariance, because the scaling behavior at the singularities is different from that at regular points, see \cite{JZZ3}. It turns out, however,  that for an important class of two-dimensional geometric variational problems, there is a condition that is weaker than conformal invariance, the vanishing of a so-called Pohozaev constant (i.e.  the Pohozaev identity),  that is not only sufficient but also necessary for the blow-up analysis. This  Pohozaev constant on one hand measures the extent to which the Pohozaev identity fails and on the other hand provides a characterization of the singular behavior of a solution at an isolated singularity.  This vanishing condition is already known to play a crucial role in geometric analysis (see e.g. \cite{St}), but for super-Liouville equations, as mentioned,  this identity by itself suffices for the blow-up analysis.

In this paper, we shall apply this strategy to the blow-up analysis of the (super-)Liouville boundary problem on surfaces with conical singularities. To this purpose, let $M$  be a compact Riemann surface with nonempty boundary $\partial M$ and with a spin structure. We also denote this compact Riemann surface as $(M,\A, g)$, where $g$ is its Riemannian metric with the conical singularities of divisor
$$\A= \Sigma^m_{j=1}\alpha_j q_j$$
(for definition of $\A$, see Section \ref{preli}). Associated to the metric $g$, one can define the gradient $\nabla$ and the Laplace operator $\Delta $ in the usual way.

We then have our main object of study,  the {\bf super-Liouville functional} that couples a real-valued function $u$ and a spinor
$\psi$ on $M$
\begin{equation}
\label{eq1} E_B\left( u,\psi \right) =\int_{M}\{\frac 12 \left|
\nabla u\right| ^2+K_gu+\left\langle (\slashiii{D}+e^u)\psi ,\psi
\right\rangle_g -e^{2u}\}dv+\int_{\partial M}\{h_gu-ce^u \}d\sigma,
\end{equation} where $K_g$ is
the Gaussian curvature in $M$, and $h_g$ is the geodesic curvature of
 $\partial M$£¬and
$c$ is a  given positive constant. The Dirac operator $\slashiii{D}$ is defined by
$\slashiii{D}\psi :=\sum_{\alpha =1}^2e_{_\alpha }\cdot \nabla
_{e_\alpha }\psi ,$ where $\left\{ e_1,e_2\right\} $ is an
orthonormal basis on $TM$, $\nabla$ is the Levi-Civita connection on
$M$ with respect to $g$ and $\cdot$ denotes Clifford multiplication
in the spinor bundle $\Sigma M$ of $M$. Finally, $ \la \cdot,\cdot
\ra_g $ is the natural Hermitian metric on  $\Sigma M$ induced by $g$. We also write $|\cdot|_g^2$  as $ \la \cdot, \cdot
\ra_g $.
For the geometric background, see \cite{LM} or \cite{Jo}.

The Euler-Lagrange system for $E_B(u,\psi)$ with Neumann / chirality
boundary conditions is
\begin{equation}
\left\{
\begin{array}{rcll}
-\Delta_g u &=& \ds\vs 2e^{2u}-e^u\left\langle \psi ,\psi
\right\rangle_g  -K_g,\qquad & \text { in } M^o\setminus \{q_1,q_2,\cdots, q_m\},
\\
\slashiii{D}_g\psi &=&\ds  -e^u\psi, \quad & \text { in } M^o\setminus \{q_1,q_2,\cdots, q_m\},\\
\ds\vs \frac {\partial u}{\partial n}& = & ce^u-h_g,\qquad & \text { on } \partial M\setminus \{q_1,q_2,\cdots, q_m\},\\
B^{\pm}\psi&=& 0,\qquad & \text { on } \partial M\setminus \{q_1,q_2,\cdots, q_m\}.
\end{array}
\right.   \label{Eq-NB}
\end{equation}
Here $B^{\pm}$ are the chirality operators (see Section \ref{preli} for the definition).

When $\psi=0$ and $(M, g)$ is a closed smooth Riemann surface, we obtain the classical Liouville functional
\begin{equation*}
 E\left( u \right) =\int_{M}\{\frac 12 \left|
\nabla u\right| ^2+K_gu -e^{2u}\}dv.
\end{equation*}
The Euler-Lagrange equation for $E(u)$ is the Liouville equation
\begin{equation*}
-\Delta_g u = \ds\vs 2e^{2u}  -K_g.
\end{equation*}
Liouville \cite{L} studied this equation in the plane, that is, for $K_g=0$.
The Liouville equation comes up in many problems of complex analysis
and differential geometry of Riemann surfaces, for instance
the prescribing curvature problem. The interplay between the
geometric and analytic aspects makes the Liouville equation
mathematically very interesting.

When $\psi\neq 0$ and $(M, g)$ again is a closed smooth Riemann surface, we obtain the super-Liouville funtional
\begin{equation*}
E\left( u,\psi \right) =\int_{M}\{\frac 12 \left| \nabla u\right|
^2+K_gu+\left\langle (\slashiii{D}+e^u)\psi ,\psi \right\rangle_g
-e^{2u}\}dv.
\end{equation*}
The Euler-Lagrange system for $E(u,\psi )$ is
\begin{equation*}
\left\{
\begin{array}{rcl}
-\Delta_g u &=& \ds\vs 2e^{2u}-e^u\left\langle \psi ,\psi
\right\rangle_g  -K_g\qquad
\\
\slashiii{D}_g\psi &=&\ds  -e^u\psi
\end{array} \text {in } M.
\right.
\end{equation*}

The supersymmetric version of the Liouville functional and equation have been studied extensively in the physics literature, see for instance \cite{Pr}, \cite{ARS} and \cite{FH}. As all supersymmetric functionals that arise in elementary particle physics, it needs anticommuting variables.

Motivated by the super-Liouville functional, a mathematical version of this functional  that works with commuting variables only, but otherwise preserves the structure and the invariances of it,   was introduced in \cite{JWZ1}.  That model couples the bosonic scalar field to a fermionic spinor field. In particular, the super-Liouville functional is conformally invariant, and it possesses a very interesting mathematical structure.

The analysis  of classical Liouville type equations was developed in  \cite{BM, LS, Li, BCLT} etc, and the corresponding analysis  for super-Liouville equations in \cite{JWZ1, JWZZ1, JZZ2}. In particular, the complete blow-up theory for sequences of solutions was established, including the energy identity for the spinor part, the blow-up value at blow-up points and the profile for a sequence of solutions at the blow-up points. For results by physicists about super-Liouville equations, we refer to \cite{Pr}, \cite{ARS} and \cite{FH} etc.

When $(M,\A, g)$ is a closed Riemann surface (without boundary) with conical singularities of divisor $\A $ and with a spin structure, we obtain that
\begin{equation*}
E\left( u,\psi \right) =\int_{M}\{\frac 12 \left| \nabla u\right|
^2+K_gu+\left\langle (\slashiii{D}+e^u)\psi ,\psi \right\rangle_g
-e^{2u}\}dv_g.
\end{equation*}
The Euler-Lagrange system for $E(u,\psi )$ is
\begin{equation}
\left\{
\begin{array}{rcl}
-\Delta_g u &=& \ds\vs 2e^{2u}-e^u\left\langle \psi ,\psi
\right\rangle_g  -K_g\qquad
\\
\slashiii{D}_g\psi &=&\ds  -e^u\psi
\end{array} \text {in } M\backslash \{q_1,q_2,\cdots, q_m\}.
\right.   \label{eq-2}
\end{equation}
This system is closely related to the classical Liouville equation, or the prescribing curvature equation on M with conical singularites
(see \cite{T1}, \cite{CL1}). \cite{BT, BT1, B, Ta, BCLT, BaMo} studied the blow-up theory of the following Liouville type equations with singular data:
$$
-\Delta_g u=\lambda \frac{Ke^u}{\int_M K e^{u}dg}-4\pi(\Sigma_{j=1}^{m}\alpha_j\delta_{q_j}- f),
$$ where $(M,g)$ is a smooth surface and the singular data appear in the equation. For system (\ref {eq-2}),   \cite{JZZ3} provides an analytic foundation  and the blow-up theory.

For Liouville boundary problems on $(M, g)$ with or without conical singularites, there are also lots of results on the blow-up analysis, see \cite{JWZ2, BWZ, GL, ZZ, ZZZ}. For super-Liouville boundary problems  on a smooth Riemann surface $M$, the corresponding results can be found in \cite {JZZ1, JWZZ2}.

 In this paper, we aim to provide an analytic foundation and to establish the blow-up analysis for the system (\ref{Eq-NB}). Our main result is the following energy quantization property for solutions to (\ref{Eq-NB}):

\begin{thm} \label{thmsin} Let $(u_n,\psi_n)$ be a sequence of solutions of (\ref{Eq-NB}) with energy conditions:
$$
\int_{M}e^{2u_n}dg<C,~~~~~~\int_{M}|\psi_n|_g^4dg<C.
$$
Define
$$
\Sigma _1=\left\{ x\in M,\text{ there is a sequence
}y_n\rightarrow x\text{ such that }u_n(y_n)\rightarrow +\infty
\right\}.
$$
If $\Sigma_1\neq \emptyset$, then  the possible values of
$$\lim_{n\rightarrow \infty}\{ \int_{M}2e^{2u_n}-e^{u_n}|\psi_n|^2_{g}dv_g+\int_{\partial M}ce^{u_n}d\sigma_g\}$$
is
$$  4\pi \N+2\pi \N+\Sigma_j 4\pi(1+\alpha_j)\{0, 1\}+\Sigma_j 2\pi(1+\alpha_j)\{0,1\},$$
where $\N=
\{0,1,2,\cdots, k\}$.
\end{thm}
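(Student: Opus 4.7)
\emph{Strategy.} I would follow the template for blow-up analysis of super-Liouville equations, adapted here to simultaneously accommodate a boundary and conical singularities. The Brezis--Merle theory together with the energy bounds $\int_M e^{2u_n}dg<C$ and $\int_M|\psi_n|^4 dg<C$ forces $\Sigma_1$ to be finite, so the quantization is additive and reduces to a local analysis around each blow-up point. I would partition $\Sigma_1$ into four types: interior regular, boundary regular, interior conical ($p=q_j\in M^o$), and boundary conical ($p=q_j\in\partial M$), and carry out the bubble extraction around each one in disjoint small (conformal-to-cone) neighborhoods.

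\emph{Bubble extraction and classification.} At each $p\in\Sigma_1$ pick $x_n\to p$ with $u_n(x_n)=\max_{B_r(p)}u_n\to+\infty$, set $\lambda_n:=e^{-u_n(x_n)}$, and define
\[
\tilde u_n(x):=u_n(x_n+\lambda_n x)+\log\lambda_n,\qquad \tilde\psi_n(x):=\sqrt{\lambda_n}\,\psi_n(x_n+\lambda_n x).
\]
Passing to a subsequence, $(\tilde u_n,\tilde\psi_n)$ converges in $C^2_{\mathrm{loc}}$ on the appropriate model space to an entire super-Liouville bubble: on $\R^2$ for interior regular $p$ (energy $4\pi$), on a half-plane with the inherited chirality condition $B^\pm$ for boundary regular $p$ (energy $2\pi$), on the flat cone of total angle $2\pi(1+\alpha_j)$ for interior conical $p$ (energy $4\pi(1+\alpha_j)$), or on the corresponding half-cone for boundary conical $p$ (energy $2\pi(1+\alpha_j)$). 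The entire solution classification in \cite{JWZZ1, JZZ3} with appropriate boundary / conical adaptations (analogous to \cite{JZZ1, JWZZ2}) pins down each of these masses. Iterating the rescaling at a regular blow-up point produces a bubble tree indexed by $\N$, whereas at a conical point the single singular axis permits at most one bubble, which accounts for the factors $\{0,1\}$.

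\emph{Pohozaev identity and neck analysis.} The decisive step, emphasized in the introduction, is to rule out hidden energy in the necks between successive bubbles and between the outermost bubble and the background. Since conformal invariance is lost at conical points, I would replace the Noether-current argument by a Pohozaev-type identity: multiply the equation for $u_n$ by $(x-x_0)\cdot\nabla u_n$ and the equation for $\psi_n$ by a matching spinorial multiplier, integrate on an annulus $B_{R_2}\setminus B_{R_1}$, and collect the boundary contributions (including those coming from $\partial M$, where the Neumann/chirality conditions must be inserted). The vanishing of the associated Pohozaev constant in the limit, together with the removability of boundary singularities it implies, forces
\[
\int_{B_{R_2}\setminus B_{R_1}}\bigl(2e^{2u_n}-e^{u_n}|\psi_n|^2\bigr)dg+\int_{(B_{R_2}\setminus B_{R_1})\cap\partial M} c\,e^{u_n}d\sigma\ \longrightarrow\ 0.
\]
Summing the bubble energies at all points of $\Sigma_1$ then yields exactly $4\pi\N+2\pi\N+\sum_j 4\pi(1+\alpha_j)\{0,1\}+\sum_j 2\pi(1+\alpha_j)\{0,1\}$.

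\emph{Main obstacle.} The hardest case is a \emph{boundary conical} blow-up point: one must simultaneously cope with the broken conformal invariance, the fact that the natural Pohozaev multiplier $(x-x_0)\cdot\nabla$ is not tangent to $\partial M$ near the singular axis unless $x_0$ is chosen on the boundary and the cone is respected, and the coupling of the chirality condition to the spinorial Pohozaev term. Establishing the vanishing of the Pohozaev constant in this mixed setting, and thereby the removability of isolated boundary--conical singularities, is the technical heart of the argument; once it is in place the quantization follows by the bubble-tree sum outlined above.
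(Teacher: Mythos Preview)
Your outline captures the right architecture---finite $\Sigma_1$, four local cases, Pohozaev identity as the replacement for conformal invariance---and you correctly identify the boundary conical point as the delicate new case. But there are two substantive gaps between your sketch and a proof.

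First, you never address the alternative that $u_n$ stays bounded in $L^\infty_{\mathrm{loc}}$ away from $\Sigma_1$. The paper must prove (Theorem~\ref{mainthm-1}) that in fact $u_n\to-\infty$ uniformly on compact subsets of $M\setminus\Sigma_1$, and this is what forces the total energy to concentrate at $\Sigma_1$. The argument uses the spinor energy identity (Theorem~\ref{engy-indt}) together with the vanishing Pohozaev constant to remove the putative limit singularity and derive a contradiction from the bubble mass; it is not automatic and your sketch omits it.

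Second, and more importantly, the paper does \emph{not} compute $m(p)$ by summing bubble energies as you propose. The Pohozaev/removability machinery only yields the energy identity for the \emph{spinor} $L^4$-energy, not for the full Liouville quantity $\int(2e^{2u_n}-e^{u_n}|\psi_n|^2)+\int ce^{u_n}$; your claimed neck vanishing for the latter is not what removability delivers. Instead, the paper takes a global route: once $u_n\to-\infty$ off $\Sigma_1$, a Green-function argument shows that $u_n+\phi$ minus its mean converges in $C^2_{\mathrm{loc}}(M\setminus\Sigma_1)$ to an explicit Green function $G$, which in particular gives the oscillation bound $\max_{\partial B_R(p)}u_n-\min_{\partial B_R(p)}u_n\le C$. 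With this bound in hand, the Pohozaev identity applied directly to $(v_n,\Psi_n)$ on $D^+_\delta(p)$ produces the quadratic relation $m(p)^2/(2\pi)=(1+\alpha)m(p)$ (Theorem~\ref{BV}), which pins down $m(p)$ exactly. Your assertion that ``at a conical point the single singular axis permits at most one bubble'' is not justified and is in any case not how the factor $\{0,1\}$ arises: each conical point $q_j$ either lies in $\Sigma_1$ or not, and if it does, $m(q_j)$ equals exactly $4\pi(1+\alpha_j)$ or $2\pi(1+\alpha_j)$ by the Pohozaev computation, not by a bubble count.
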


\
\

From the energy quantization property, one can deduce the
concentration properties of conformal volume and the compactness of
solutions. It turns out that understanding of this property is the key
step to study existence from  a variational point of view by a refined Moser-Trudinger inequality, see e.g. \cite{DJLW, DM}.

If we assume that the points $q_1, q_2, \cdots, q_l$ are in $M^o$ for $1\leq l<m$ and the points $q_{l+1}, q_{l+2}, \cdots, q_m$ are on $\partial M$ for the surface $(M,\A,g)$ with the divisor $\A=\Sigma_{j=1}^{m}\alpha_jq_j$, $\alpha_j>0$,  we have the following  Gauss-Bonnet formula
\begin{equation*}
\frac 1{2\pi}\int_{M}K_gdv_g+\frac{1}{2\pi}\int_{\partial M}h_gd\sigma_g = \mathcal{X}(M) + |\A|,
\end{equation*}
where $\mathcal{X}(M)=2-2g_M$ is the topological Euler characteristic of $M$ itself, $g_M$ is the genus of $M$ and
$$|\A|= \Sigma_{j=1}^{l}\alpha_j+\Sigma_{j=l+1}^{m}\frac{\alpha_j}{2}$$
is the degree of $\A$, see \cite{T1}. From (\ref{Eq-NB}) we obtain that
$$
\int_{M}2e^{2u_n}-e^{u_n}|\psi_n|^2_gdv_g+\int_{\partial M}ce^{u_n}d\sigma_{g}=\int_{M}K_gdv_g+\int_{\partial M}h_gd\sigma_g=2\pi(\mathcal{X}(M) + |\A|).
$$

\

Then we can use Theorem \ref{thmsin} to get the following:

\begin{thm}\label{global-blowup}
Let $(M,\A,g)$ be as above.  Then
\begin{itemize}
\item[(i)] if
$$2\pi(1-g_M)+2\pi\Sigma_{j=1}^{l}\alpha_j+ \pi\Sigma_{j=1+1}^{m}\alpha_j= 2\pi,$$
then the blow-up set $\Sigma_1$ contains at most one point. In particular, $\Sigma_1$ contains at most one point if $g_M=0$ and $\A=0$.
\item[(ii)] if
$$2\pi(1-g_M)+2\pi\Sigma_{j=1}^{l}\alpha_j+ \pi\Sigma_{j=1+1}^{m}\alpha_j < \pi,$$
then the blow-up set $\Sigma_1=\emptyset$.
\end{itemize}
\end{thm}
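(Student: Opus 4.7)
The plan is to combine the energy quantization in Theorem \ref{thmsin} with the global integral identity already recalled in the excerpt just before the theorem, which pins the total energy on the left-hand side to a purely topological value. First I would (re)derive that identity by testing the first equation of (\ref{Eq-NB}) by $1$ on $M \setminus \bigcup_j B_\varepsilon(q_j)$, applying the divergence theorem, substituting the Neumann-type boundary condition $\partial_n u_n = c e^{u_n} - h_g$, and sending $\varepsilon \to 0$ (the circular boundary terms around each conical point drop out thanks to the logarithmic-type local description of $u_n$ near $q_j$). The output is
\[
T \;:=\; \lim_{n\to\infty}\Bigl(\int_{M}\bigl(2e^{2u_n} - e^{u_n}|\psi_n|_g^2\bigr)\, dv_g + \int_{\partial M} c e^{u_n}\, d\sigma_g\Bigr) \;=\; 2\pi\bigl(\mathcal{X}(M) + |\A|\bigr),
\]
a constant independent of $n$ which coincides with the topological quantity appearing in the hypotheses of (i) and (ii).

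Next I would compare $T$ against the quantization set from Theorem \ref{thmsin}. If $\Sigma_1 \neq \emptyset$, then $T$ must lie in
\[
4\pi \N + 2\pi \N + \sum_{j\leq l} 4\pi(1+\alpha_j)\{0,1\} + \sum_{j>l} 2\pi(1+\alpha_j)\{0,1\}.
\]
The essential combinatorial observation is that every nonzero summand is at least $2\pi$: a regular boundary blow-up contributes exactly $2\pi$, a boundary conical blow-up contributes $2\pi(1+\alpha_j) > 2\pi$ since $\alpha_j > 0$, and any interior blow-up contributes $\geq 4\pi$. In particular, each blow-up point absorbs at least $2\pi$ of $T$.

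The two conclusions then fall out. For (ii), the stated strict inequality pushes $T$ strictly below $2\pi$, the minimum nonzero element of the quantization set, so no admissible blow-up configuration exists and $\Sigma_1 = \emptyset$. For (i), the equality hypothesis forces $T$ to equal exactly $2\pi$, which can be realized in the quantization set only by a single regular-boundary summand (every other nonzero term is strictly larger than $2\pi$); therefore $|\Sigma_1| \leq 1$, and when equality holds the blow-up point is forced to be a regular boundary point. The ``in particular'' instance $g_M = 0$, $\A = 0$ is the disc case, for which Gauss--Bonnet gives $T = 2\pi$ and the conclusion specializes immediately.

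The main obstacle is not analytic at this stage --- Theorem \ref{thmsin} has packaged the hard blow-up analysis --- but is rather the careful bookkeeping: verifying that the minimum nonzero summand in the quantization set is indeed $2\pi$ and that no combination of smaller contributions is possible, plus justifying the divergence-theorem step around the conical points. The latter is standard since the normal derivative of $u_n$ on small circles encircling each $q_j$ yields a boundary term that vanishes as the radius tends to zero, under the known local asymptotics of solutions to the singular super-Liouville system.
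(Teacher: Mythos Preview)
Your approach is correct and is precisely what the paper does: it gives no separate proof of this theorem, simply recording it as an immediate corollary of Theorem~\ref{thmsin} together with the Gauss--Bonnet identity displayed just before. Your filling-in of the arithmetic --- that the total energy $T$ is topologically fixed, that by Theorem~\ref{BV} each blow-up point contributes at least $2\pi$ to $T$ (so $|\Sigma_1|\le T/(2\pi)$), and hence the hypotheses of (i) and (ii) force $|\Sigma_1|\le 1$ and $\Sigma_1=\emptyset$ respectively --- is exactly the intended argument.
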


To show Theorem \ref{thmsin}, a key step is to compute the blow-up value
$$m(p)=\lim_{R\rightarrow 0}\lim_{n\rightarrow \infty}\{\int_{B^M_R(p)}(2e^{2u_n}-e^{u_n}|\psi_n|_{g}^2-K_{g})dv_{g}+\int_{\partial M\cap B^M_R(p)}(ce^{u_n}-h_{g})d\sigma_{g}\},
$$ at the blow-up point $p\in \Sigma_1$ for a blow-up sequence $(u_n,\psi_n)$. Here $B^M_R(p)$ is a geodesic ball of $(M,g)$ at $p$. For this purpose, we need to study the following local super-Liouville boundary value problem (see Section \ref{loc}):
\begin{equation}\label{Eq-B}
\left\{
\begin{array}{rcll}
-\Delta u(x) &=& 2V^2(x)|x|^{2\alpha}e^{2u(x)}-V(x)|x|^{\alpha}e^{u(x)}|\Psi|^2,  \quad &\text {in } D^+_{r}, \\
\slashiii{D}\Psi &=& -V(x)|x|^{\alpha}e^{u(x)}\Psi, \quad &\text {in } D^+_{r}, \\
\frac {\partial u}{\partial n}& = & cV(x)|x|^{\alpha}e^{u(x)},\quad & \text { on } L_r, \\
B^{\pm}\Psi &=& 0,\quad & \text { on } L_r.
\end{array}
\right.
\end{equation}
Here $\alpha \geq 0$, $V(x)$ is in $C^{1}_{loc}(D^+_r\cup L_r)$ and satisfies $0< a\leq V(x)\leq b$.  $L_r$ and $S_r^+$ here and in the sequel are  portions of  $\partial D^+_r$, which are defined in section 3. Then we have the following Brezis-Merle type concentration compactness theorem:
\begin{thm}\label{mainthm}
Let $(u_n,\Psi _n)$ be a sequence of regular solutions to $\left(
\ref{Eq-B}\right) $
 satisfying
 $$\int_{D^+_r}|x|^{2\alpha}e^{2u_n}+|\Psi_n|^4dx+\int_{L_r}|x|^{\alpha}e^{u_n}ds<C.$$ Define
\[\begin{array}{rcl}
\Sigma _1&=&\left\{ x\in  D^+_r\cup L_r,\text{ there is a sequence
}y_n\rightarrow x\text{ such that }u_n(y_n)\rightarrow +\infty
\right\},\\
\Sigma _2&=&\left\{ x\in D^+_r\cup L_r,\text{ there is a sequence
}y_n\rightarrow x\text{ such that }\left| \Psi_n(y_n)\right|
\rightarrow +\infty \right\} .\end{array}
\]
Then, we have $\Sigma_2\subset\Sigma_1$. Moreover, $(u_n,\Psi _n)$
admits a subsequence, still denoted by $( u_n,\Psi _n),$ that
satisfies

\begin{enumerate}
\item[a)] $|\Psi _n|$ is bounded in $%
L_{loc}^{\infty} ( (D^+_r\cup L_r)\backslash \Sigma _2)$ .

\item[b)]  For $u_n$, one of the following alternatives holds:
\begin{enumerate}
\item[i)]  $u_n$ is bounded in $L^{\infty}_{loc} (D^+_r\cup L_r).$

\item[ii)]  $u_n$ $\rightarrow -\infty $ uniformly on compact subsets of $D^+_r\cup L_r$.

\item[iii)]  $\Sigma _1$ is finite, nonempty and either
\begin{equation}
u_n\text{ is bounded in }L_{loc}^\infty ((D^+_r\cup L_r)\backslash \Sigma _1)
\label{aaa}
\end{equation}
or
\begin{equation}
u_n\rightarrow -\infty \text{ uniformly on compact subsets of }
(D^+_r\cup L_r)\backslash \Sigma _1.  \label{ddd}
\end{equation}
\end{enumerate}
\end{enumerate}
\end{thm}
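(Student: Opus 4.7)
The plan is to follow the classical Brezis--Merle strategy, adapted to the mixed Neumann/chirality boundary value problem with the conical weight $|x|^{2\alpha}$. I would begin with the easier structural facts and then attack the trichotomy.

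First, I would establish the inclusion $\Sigma_2\subset\Sigma_1$. Fix $x_0\notin\Sigma_1$; then there is a (half-)disc neighborhood on which $u_n$ is uniformly bounded from above, so that the coefficient $V(x)|x|^{\alpha}e^{u_n}$ of the Dirac equation is uniformly bounded. Combining this with the $L^4$ bound on $\Psi_n$ and the chirality boundary condition $B^{\pm}\Psi_n=0$ on $L_r$, standard $L^p$ elliptic theory for the Dirac operator with chiral boundary conditions (as in the smooth-boundary analogue used in \cite{JWZZ2, JZZ1}) promotes the $L^4$ bound to an $L^{\infty}_{\mathrm{loc}}$ bound, hence $x_0\notin\Sigma_2$. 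Part (a) is then essentially by definition: outside $\Sigma_2$, $|\Psi_n|$ is locally uniformly bounded, and a diagonal subsequence extraction using an exhaustion of $(D_r^+\cup L_r)\setminus\Sigma_2$ by compact sets yields the claim.

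The main content is the trichotomy (b), for which I would prove a boundary/weighted Brezis--Merle type lemma for the Neumann problem on the half-disc. The inhomogeneous data
\[
f_n:=2V^{2}|x|^{2\alpha}e^{2u_n}-V|x|^{\alpha}e^{u_n}|\Psi_n|^2, \qquad g_n:=cV|x|^{\alpha}e^{u_n}
\]
are uniformly bounded in $L^1(D_r^+)$ and $L^1(L_r)$ respectively, the cross term by Cauchy--Schwarz against the hypotheses on $|x|^{2\alpha}e^{2u_n}$ and $|\Psi_n|^4$. Passing to a subsequence, $f_n\,dx$ and $g_n\,ds$ weak-$*$ converge to finite Radon measures. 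At any point $x_0$ whose total local mass (the interior mass plus the boundary mass, weighted appropriately to reflect the Neumann Green kernel) is below the critical Brezis--Merle threshold, the boundary Brezis--Merle lemma yields $L^p$ bounds for $e^{u_n}$ and $e^{2u_n}$ with some $p>1$, and hence by elliptic theory $u_n^+$ is locally bounded near $x_0$. Since the global $L^1$ mass is finite, only finitely many points exceed the threshold, so $\Sigma_1$ is finite whenever non-empty. To separate (\ref{aaa}) from (\ref{ddd}), I would split $u_n=v_n+w_n$ on a compact $K\subset(D_r^+\cup L_r)\setminus\Sigma_1$, where $w_n$ solves the Neumann Poisson problem with the now-bounded right hand side (uniformly controlled by $L^p$ estimates) and $v_n$ is Neumann-harmonic on $K$. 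Applying Harnack's principle to $v_n$ (after reflection across $L_r$ to kill the Neumann datum) gives the usual dichotomy for subharmonic functions bounded above: either a locally uniform bound, or uniform divergence to $-\infty$. This transfers back to $u_n$, handling both the case $\Sigma_1=\emptyset$ (alternatives (i) and (ii)) and the case $\Sigma_1\neq\emptyset$ (alternatives in (iii)).

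The main obstacle is the precise boundary-plus-weight version of the Brezis--Merle inequality: one must simultaneously track the contribution of a Neumann boundary datum to the local exponential integrability of $u_n$ in the half-disc and handle the extra concentration possible at the singular point $x=0$, where the weight $|x|^{2\alpha}\,dx$ behaves as a measure of fractional dimension $2+2\alpha$. Careful bookkeeping of the normalization constants here is what produces the thresholds $4\pi$, $2\pi$, $4\pi(1+\alpha)$ and $2\pi(1+\alpha)$ that later appear in Theorem~\ref{thmsin}, and I expect this calibration, rather than any structural step, to be the delicate technical core of the proof.
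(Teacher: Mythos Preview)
Your proposal is correct and follows essentially the same route as the paper, which in fact does not write out a detailed proof of this theorem: it records the key small-energy regularity result (Lemma~\ref{lmbm4}, giving simultaneous $L^\infty$ bounds on $u_n^+$ and $|\Psi_n|$ once $\int 2V^2|x|^{2\alpha}e^{2u_n}<\pi$ and $|c|\int V|x|^\alpha e^{u_n}<\pi$), and in Remark~\ref{rem3.4} uses the substitution $v_n=u_n+\alpha\log|x|$ to trade the conical weight for a boundary Dirac mass at the origin, thereby reducing to the smooth-boundary case of \cite{JWZZ2} and the singular-data arguments of \cite{BT}. Your direct treatment of the weighted Neumann Brezis--Merle inequality is equivalent to this reduction; the only point to correct is that the numbers $4\pi$, $2\pi$, $4\pi(1+\alpha)$, $2\pi(1+\alpha)$ you allude to are the \emph{blow-up values} computed later in Theorem~\ref{BV} via the Pohozaev identity, not the thresholds governing finiteness of $\Sigma_1$ here---for the present theorem only the $\pi$ cutoff of Lemma~\ref{lmbm4} is needed.
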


To show the quantization property of the blow-up value, we need to rule out (\ref{aaa}) in the above theorem. To this end, the decay estimates of the spinor part $\Psi_n$, the Pohozaev identity  of the local system and the energy identity of $\Psi_n$, which means there is no energy contribution on the neck domain, play the essential roles. The corresponding theorem is the following:

\begin{thm}\label{engy-indt}
Let $(u_n,\Psi _n)$ be a sequence of regular solutions to $\left(
\ref{Eq-B}\right) $
 satisfying
 $$\int_{D^+_r}|x|^{2\alpha}e^{2u_n}+|\Psi_n|^4dx+\int_{L_r}|x|^{\alpha}e^{u_n}ds<C.$$
 Denote by $\Sigma_1=\{x_1, x_2,\cdots,
x_l\}$ the blow-up set of $u_n$. Then there are finitely many solutions
$(u^{i,k},\Psi^{i,k})$ that satisfy
\begin{equation}\label{b1}
\left\{
\begin{array}{rcll}
-\Delta u^{i,k} &=& \ds\vs 2|x|^\alpha e^{2u^{i,k}}-|x|^\alpha e^{u^{i,k}}\left\langle
\Psi^{i,k} ,\Psi^{i,k} \right\rangle-1 ,\qquad &\text {in } S^2,
\\
\slashiii{D}\Psi^{i,k} &=&\ds  -|x|^\alpha e^{u^{i,k}}\Psi^{i,k}, \quad &\text {in } S^2,\\
\end{array}
\right.
\end{equation}
for $i=1,2,\cdots , I$, and  $k=1,2,\cdots, K_i$, and $\alpha\geq 0$, or there are finitely
many solutions $(u^{j,l},\Psi^{j,l})$ that satisfy
\begin{equation}\label{b2}
\left\{
\begin{array}{rcll}
-\Delta u^{j,l} &=& \ds\vs 2|x|^\alpha e^{2u^{j,l}}-|x|^\alpha e^{u^{j,l}}\left\langle
\Psi^{j,l} ,\Psi^{j,l} \right\rangle-1 ,\quad &\text { in }
S^2_{c'},
\\
\slashiii{D}\Psi^{j,l} &=&\ds  -|x|^\alpha e^{u^{j,l}}\Psi^{j,l}, \quad &\text { in } S^2_{c'},\\
\ds\vs \frac {\partial u^{j,l}}{\partial n}& = & c|x|^\alpha e^{u^{j,l}}-c', \quad &\text{ on } \partial S^2_{c'},\\
B^\pm \Psi^{j,l} &=& 0, \quad &\text{ on } \partial S^2_{c'},
\end{array}
\right.
\end{equation}
for $j=1,2,\cdots , J$,  and  $l=1,2,\cdots, L_j$,  and $\alpha\geq 0$. Here $S^2_{c'}$ is a
portion of the sphere cut out by a 2-plane with constant geodesic curvature
$c'$. After selection of a subsequence, $\Psi_n$ converges in
$C_{loc}^{\infty}$ to $\Psi$ on $(B_r^+\cup L_r)\backslash \Sigma_1$ and we have
the energy identity:
\begin{equation}\label{1.2} \lim_{n\rightarrow
\infty}\int_{D^+_r}|\Psi_n|^4dv=\int_{D^+_r}|\Psi|^4dv+\sum_{i=1}^{I}
\sum_{k=1}^{K_i}\int_{S^2}|\Psi^{i,k}|^4dv+\sum_{j=1}^{J}
\sum_{l=1}^{L_j}\int_{S^2_{c'}}|\Psi^{j,l}|^4dv. \end{equation}
\end{thm}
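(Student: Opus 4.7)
My plan is to carry out the standard bubble-tree induction at each point of $\Sigma_1=\{x_1,\ldots,x_l\}$ and to establish a ``no neck energy'' property for the spinor $\Psi_n$ on each annular region between consecutive scales. By Theorem \ref{mainthm} we may pass to a subsequence for which $|\Psi_n|$ is bounded on compact subsets of $(D_r^+\cup L_r)\setminus\Sigma_1$, and elliptic regularity for the Dirac operator with chirality boundary condition then gives $\Psi_n\to\Psi$ in $C^\infty_{\mathrm{loc}}((D_r^+\cup L_r)\setminus\Sigma_1)$. It therefore remains to analyze a neighbourhood of each $x_i$ separately.

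Near $x_i$, I would pick $x_i^n\to x_i$ realizing the local maximum of $u_n$ and set $\lambda_n:=\exp\bigl(-u_n(x_i^n)/(1+\alpha_i)\bigr)$, with $\alpha_i=\alpha$ if $x_i=0$ and $\alpha_i=0$ otherwise. The rescaled pair
\[
\tilde u_n(y)=u_n(x_i^n+\lambda_n y)+(1+\alpha_i)\log\lambda_n,\qquad \tilde\Psi_n(y)=\sqrt{\lambda_n}\,\Psi_n(x_i^n+\lambda_n y)
\]
is normalized so that $\tilde u_n(0)=0$ and still satisfies the scale-invariant form of (\ref{Eq-B}), with $V(x_i^n+\lambda_n y)\to V(x_i)$. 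The $L^4$ bound on $\tilde\Psi_n$ together with $\varepsilon$-regularity for the coupled system yields a nontrivial entire limit $(u^{i,1},\Psi^{i,1})$. When $x_i$ is interior, the limit lives on $\R^2$ and, after absorbing the constant $V(x_i)$ into a conformal change and using removability of the isolated singularity at $\infty$, compactifies through stereographic projection to a solution on $S^2$ of the form (\ref{b1}). When $x_i\in L_r$, the half-plane limit together with the Neumann condition $\partial_\nu u=cV(x_i)e^u$ compactifies by inverse stereographic projection to a spherical cap $S^2_{c'}$ of the prescribed constant geodesic boundary curvature, giving (\ref{b2}). If energy still concentrates in some annulus $\{\lambda_n R\le |x-x_i^n|\le \lambda_n R_n\}$ with $R_n\to\infty$, I iterate the rescaling at that sub-scale. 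Since every nontrivial solution of (\ref{b1}) or (\ref{b2}) carries a uniformly positive amount of the $e^{2u}$ energy by a removable-singularity gap, and since the total such energy is finite, the induction terminates after finitely many steps, yielding the families $(u^{i,k},\Psi^{i,k})$ and $(u^{j,l},\Psi^{j,l})$.

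The hard step I expect is the no neck energy claim: for each $i$ and each pair of consecutive scales, the $L^4$ mass of $\Psi_n$ on the annular neck joining them must tend to $0$ as $n\to\infty$ and the inner and outer radii are sent to their extremes in the correct order. Because of the weight $|x|^\alpha$ and the chirality condition on $L_r$, the conformal-invariance/Noether-current approach used in \cite{JWZZ1} is not directly available. Following the strategy emphasized in the introduction, I would rely on the Pohozaev identity for (\ref{Eq-B}) on dyadic sub-annuli, whose inner and outer boundary terms are small by $\varepsilon$-regularity for $\Psi_n$ and by the decay of $e^{u_n}$ once the small-energy threshold is placed outside the concentration scales. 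The weight $|x|^\alpha$ contributes only a controllable lower-order boundary term, and the chirality boundary integral on $L_r$ vanishes as in the smooth case because $\langle\Psi,n\cdot\Psi\rangle=0$ under $B^\pm\Psi=0$. A three-circle/hole-filling iteration then upgrades this to a pointwise decay $|\Psi_n(x)|^2\le C|x-x_i^n|^{-\beta}$ on the neck with $\beta>1/2$, which is the sharp threshold ensuring $\int_{\mathrm{neck}}|\Psi_n|^4\to 0$.

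Once no neck energy is in hand, the identity (\ref{1.2}) follows by decomposing $D_r^+$ into the weak-convergence region, the rescaled bubble domains, and the necks: the first two contributions converge by the established $C^\infty_{\mathrm{loc}}$ convergence and the scale-invariance of $\int|\Psi|^4$ under the bubble rescaling, while the neck contributions are negligible by the above. The required $C^\infty_{\mathrm{loc}}$ convergence of $\Psi_n$ to $\Psi$ away from $\Sigma_1$ is already a byproduct of the first step, completing the statement.
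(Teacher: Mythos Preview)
Your overall bubble-tree outline is sound and matches the paper's structure, but the mechanism you propose for the no-neck-energy step is not the one that actually works here, and in particular the role you assign to the Pohozaev identity is misplaced.

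First, the annular control of $\int|\Psi_n|^4$ in the paper does \emph{not} come from a Pohozaev identity on dyadic sub-annuli, nor from a three-circle argument producing pointwise decay $|\Psi_n(x)|^2\le C|x-x_i^n|^{-\beta}$. The key tool is an elliptic annular estimate for the Dirac operator (Lemma~\ref{main-lamm}):
\[
\Bigl(\int_{A^+_{2r_1,r_2/2}}|\Psi|^4\Bigr)^{1/4}\le C_0\Bigl(\int_{A^+_{r_1,r_2}}|x|^{2\alpha}e^{2u}\Bigr)^{1/2}\Bigl(\int_{A^+_{r_1,r_2}}|\Psi|^4\Bigr)^{1/4}+\text{(boundary dyadic terms)}.
\]
One then shows (Claim~2) that the neck can be cut into a \emph{uniformly bounded} number of sub-annuli on each of which $\int|x|^{2\alpha}e^{2u_n}\le(2C_0)^{-2}$, so the first term on the right is absorbed; the boundary dyadic terms are small by Claim~1 (small energy on every annulus $D^+_r\setminus D^+_{e^{-1}r}$ in the neck). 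Your proposed pointwise decay of $\Psi_n$ on a degenerating neck is not available: Lemma~\ref{asy-phi} gives such decay only for a \emph{fixed} solution near an isolated singularity, under the hypothesis $e^{2v}=O(|x|^{-2(1+\alpha)+\varepsilon})$, which you have not established along the neck.

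Second, the Pohozaev identity enters at a different, more delicate point: in proving Claim~1 by contradiction. If some dyadic annulus in the neck retained a fixed positive energy, rescaling by $r_n$ produces a limit $(v,\varphi)$ on $\overline{\R^2_+}\setminus\{0\}$ (or a translated half-plane). The crucial step is to show the singularity at $0$ is removable, so that $(v,\varphi)$ extends to a genuine second bubble, contradicting the single-bubble hypothesis. Here one computes the Pohozaev constant $C(v,\varphi)$ in two ways: passing to the limit in $C(v_n,\varphi_n)=0$ gives $C(v,\varphi)=(1+\alpha)\beta$ (where $\beta$ is the concentrated mass at $0$), while the local expansion $v=-\frac{\beta}{\pi}\log|x|+\text{bounded}$ gives $C(v,\varphi)=\beta^2/(2\pi)$; since finite energy forces $\beta\le(1+\alpha)\pi$, one concludes $\beta=0$ and applies Theorem~\ref{thm-sigu-move1-B}. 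This is the substitute for conformal invariance alluded to in the introduction, and it is the step you are missing.

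Finally, your rescaling scheme is incomplete at the singular boundary point $x_i=0$. Setting $\alpha_i=\alpha$ there is only correct in Case~I ($|x_n|/\lambda_n=O(1)$); when $|x_n|/\lambda_n\to\infty$ one must rescale twice (first by $|x_n|$, then by $\rho_n=e^{-u_n(x_n)}/|x_n|^\alpha$), and the resulting bubble carries \emph{no} weight $|x|^\alpha$. A further dichotomy on $t_n/\rho_n$ decides whether the bubble is on $S^2$ or on $S^2_{c'}$. Without this case split, you cannot correctly identify the limiting equations \eqref{b1}--\eqref{b2} or set up the right neck domains.
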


A crucial step in proving the above theorem is to show the removability of isolated singularities at the boundary, which is equivalent to the vanishing of the Pohozaev constant (see Theorem \ref{thm-sigu-move1-B}). Once the energy identity for the spinor part  ($\ref{1.2}$) is established, we can then obtain
\begin{thm}\label{mainthm-1}
Let $(u_n, \Psi_n)$ be solutions as in Theorem \ref{mainthm}. Assume that $(u_n, \Psi_n)$ blows up and the blow-up set $\Sigma_1\neq \emptyset$. Then
\[
u_n\rightarrow -\infty\quad \text { uniformly on compact subsets of } (D^+_r\cup L_r)\setminus\Sigma_1.
\]
Furthermore,
\[\int_{D^+_r(0)}[2V(x)|x|^{2\alpha}e^{2u_n}-V(x)|x|^{\alpha}e^{u_n}|\Psi_n|^2]\phi dx+ \int_{L_r}cV(x)|x|^{\alpha}e^{u_n}\rightarrow \sum_{x_i\in \Sigma_1}m(x_i)\phi(x_i)\]
for every $\phi\in C^\infty_o(D^+_r\cup L_r)$ and   $m(x_i)>0$.
\end{thm}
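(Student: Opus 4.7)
The plan is to deduce Theorem \ref{mainthm-1} from Theorem \ref{engy-indt}, which already provides the bubble decomposition at each $x_i\in\Sigma_1$ and the spinor $L^4$-energy identity (\ref{1.2}) with no energy loss on the necks. Two tasks remain: (i) ruling out alternative (\ref{aaa}) in the trichotomy of Theorem \ref{mainthm}, and (ii) extracting the concentration of the exponential mass measure at $\Sigma_1$ with strictly positive weights.

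For (i) I would argue by contradiction. Assume $u_n$ is uniformly bounded in $L_{loc}^\infty((D_r^+\cup L_r)\setminus\Sigma_1)$. Combined with the $L_{loc}^\infty$ bound on $|\Psi_n|$ off $\Sigma_2\subset\Sigma_1$ from Theorem \ref{mainthm}(a), elliptic regularity yields $u_n\to u_\infty$ in $C^2_{loc}$ and $\Psi_n\to\Psi_\infty$ in $C^\infty_{loc}$ on the complement of $\Sigma_1$, with $(u_\infty,\Psi_\infty)$ solving the system (\ref{Eq-B}) there. Fix $x_0\in\Sigma_1$ and a small half-disk $B_\delta^+(x_0)$, and apply the local Pohozaev identity for (\ref{Eq-B}) on $B_\delta^+(x_0)\setminus B_\epsilon^+(x_0)$, first sending $n\to\infty$ and then $\epsilon\to 0$. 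The interior integrand splits into a bubble contribution (concentrating at a scale $\epsilon_n\to 0$ around $x_0$) and a neck contribution. The spinor $L^4$-energy identity (\ref{1.2}) implies no spinor energy on the neck, and the associated pointwise decay estimates for $\Psi_n$ there make the spinor Pohozaev terms on the neck vanish in the limit. Each bubble $(u^{i,k},\Psi^{i,k})$ or $(u^{j,l},\Psi^{j,l})$ from (\ref{b1})--(\ref{b2}) contributes a strictly positive amount to the limiting inner Pohozaev integral via its nontrivial exponential mass on $S^2$ or $S^2_{c'}$, while the outer boundary terms on $\partial B_\delta^+(x_0)$ converge, under hypothesis (\ref{aaa}), to the corresponding boundary terms of $(u_\infty,\Psi_\infty)$. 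Sending $\delta\to 0$ and invoking the removability of the isolated singularity of $(u_\infty,\Psi_\infty)$ at $x_0$ through the vanishing of its Pohozaev constant (Theorem \ref{thm-sigu-move1-B}), I obtain the contradiction that the Pohozaev balance at $x_0$ is simultaneously zero (by removability) and strictly positive (by the bubble contribution). Hence (\ref{aaa}) cannot occur, and (\ref{ddd}) gives $u_n\to-\infty$ uniformly on compact subsets of $(D_r^+\cup L_r)\setminus\Sigma_1$.

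For (ii), the conclusion of (i) implies $V|x|^{2\alpha}e^{2u_n}$, $V|x|^\alpha e^{u_n}|\Psi_n|^2$, and $cV|x|^\alpha e^{u_n}$ all tend to zero uniformly on compact subsets of the complement of $\Sigma_1$, using also the $L_{loc}^\infty$ bound on $|\Psi_n|$. The a priori $L^1$ hypotheses then force the signed measures
\[
\mu_n:=\bigl[2V|x|^{2\alpha}e^{2u_n}-V|x|^\alpha e^{u_n}|\Psi_n|^2\bigr]\chi_{D_r^+}\,dx + cV|x|^\alpha e^{u_n}\chi_{L_r}\,ds
\]
to have uniformly bounded total variation, and every weak-$\ast$ cluster measure to be supported on the finite set $\Sigma_1$; testing against $\phi\in C_o^\infty(D_r^+\cup L_r)$ yields the claimed convergence $\int\phi\,d\mu_n\to\sum_{x_i\in\Sigma_1}m(x_i)\phi(x_i)$. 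Positivity $m(x_i)>0$ is immediate from the existence of at least one nontrivial bubble at $x_i$ supplied by Theorem \ref{engy-indt}, each with strictly positive total exponential mass, via a standard rescaling comparison with the bubble profile.

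The main obstacle is the Pohozaev argument in step (i). The conical weight $|x|^\alpha$ and the $C^1$ prefactor $V(x)$ introduce inhomogeneous error terms in the Pohozaev identity that must be estimated uniformly in $n$ and $\epsilon$, and the control of the spinor Pohozaev terms on the neck---delicate precisely because the problem lacks full conformal invariance---depends crucially on the spinor $L^4$-energy identity from Theorem \ref{engy-indt}. Closing this loop is the heart of the proof.
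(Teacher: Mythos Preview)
Your overall strategy---argue by contradiction, pass to the limit $(u_\infty,\Psi_\infty)$, use the Pohozaev identity and the no-neck-energy from Theorem~\ref{engy-indt}---matches the paper. But there is a circular step in (i). You invoke ``the removability of the isolated singularity of $(u_\infty,\Psi_\infty)$ at $x_0$ through the vanishing of its Pohozaev constant (Theorem~\ref{thm-sigu-move1-B})'' to conclude the Pohozaev balance is zero; yet Theorem~\ref{thm-sigu-move1-B} says removability is \emph{equivalent} to the vanishing of $C_B(u_\infty,\Psi_\infty)$, so you cannot use removability until you have \emph{proved} that the Pohozaev constant vanishes. Nothing in your sketch establishes this. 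What the paper actually does is compute $C_B(u_\infty,\Psi_\infty)$ in two independent ways: first, passing the Pohozaev identity for the regular $(u_n,\Psi_n)$ to the limit yields $C_B(u_\infty,\Psi_\infty)=(1+\alpha)\beta$, where $\beta$ is the concentrated mass at $x_0$; second, Green's representation shows $u_\infty(x)=-\frac{\beta}{\pi}\log|x|+O(1)$ near $0$, and the computation in Theorem~\ref{thm-sigu-move1-B} then gives $C_B(u_\infty,\Psi_\infty)=\frac{\beta^2}{2\pi}$. Equating the two and using the energy constraint $\int_{D_\rho^+}|x|^{2\alpha}e^{2u_\infty}<\infty$, which forces $\beta<(1+\alpha)\pi$, one gets $\beta=0$ and hence removability. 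Without this second computation your contradiction does not close.

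Moreover, once $\beta=0$ and $(u_\infty,\Psi_\infty)$ is regular, the contradiction in the paper is not a Pohozaev balance but a direct \emph{mass} comparison: integrating $-\Delta u_n$ over $D_\delta^+$ and using $C^1$ convergence on $S_\delta^+$ shows that $\int_{D_\delta^+}(2V^2|x|^{2\alpha}e^{2u_n}-V|x|^\alpha e^{u_n}|\Psi_n|^2)+\int_{L_\delta}cV|x|^\alpha e^{u_n}$ converges to the corresponding integral for $(u_\infty,\Psi_\infty)$, which is arbitrarily small for small $\delta$. On the other hand, the bubble (together with the no-neck spinor energy from Theorem~\ref{engy-indt}) forces this quantity to be at least $2\pi(1+\alpha)-\frac{1+\alpha}{10}$ (or $2\pi-\tfrac{1}{10}$, $4\pi-\tfrac{1}{10}$ in the other cases). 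This is where the energy identity~\eqref{1.2} is actually used, not in the Pohozaev step. Your part (ii) is fine.
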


In the end, with the help of the Pohozaev identy (see Proposition \ref {prop-poho}) and the Green function at some singular points, we have the following:

\begin{thm}\label{BV}
Let $(u_n, \Psi_n)$ be solutions as in Theorem \ref{mainthm}. Assume that $(u_n, \Psi_n)$ blows up and the blow-up set $\Sigma_1\neq \emptyset$. Let $p\in \Sigma_1$ and assume that $p$ is the only blow-up point in $\overline{D_{\delta_0}^+(p)}$ for some $\delta_0>0$. If there exists a positive constant $C$ such that
$$\max_{S_{\delta_0}^+(p)}u_n-\min_{S_{\delta_0}^+(p)}u_n\leq C,$$
then the blow-up value $m(p)=4\pi$ when $p\notin L_{\delta_0}(p)$, $m(p)=2\pi$ when $p\in L_{\delta_0}(p)\setminus \{0\}$, and $m(p)=2\pi(1+\alpha)$ when $p=0$.
\end{thm}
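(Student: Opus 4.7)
The plan is to combine the Pohozaev identity from Proposition~\ref{prop-poho} with a Green function representation of $u_n$ near $p$, using the Harnack-type oscillation bound to convert boundary Pohozaev integrals into an explicit algebraic equation for $m(p)$.

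First, I would harvest the consequences of the earlier results. Since $p$ is the only blow-up point in $\overline{D_{\delta_0}^+(p)}$, Theorem~\ref{mainthm-1} yields $u_n \to -\infty$ uniformly on compact subsets of $(\overline{D_{\delta_0}^+(p)}\cup L_{\delta_0}(p))\setminus\{p\}$, so in particular $e^{u_n}$ and $e^{2u_n}$ vanish uniformly on $S_{\delta_0}^+(p)$; meanwhile Theorem~\ref{engy-indt} makes $|\Psi_n|$ uniformly bounded on $S_{\delta_0}^+(p)$ and supplies an $L^4$ energy identity that rules out neck energy for the spinor. The hypothesis $\max_{S_{\delta_0}^+}u_n - \min_{S_{\delta_0}^+}u_n\leq C$ furnishes a Harnack-type estimate on a thin annulus around $S_{\delta_0}^+(p)$, which is what permits replacing the precise values of $u_n$ on that circle by a single mean value up to an additive bounded correction.

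Second, I would split $u_n = v_n + w_n$ on $D_{\delta_0}^+(p)$, where $v_n$ solves the inhomogeneous equation of~(\ref{Eq-B}) with Neumann datum $cV|x|^\alpha e^{u_n}$ on $L_{\delta_0}(p)$ and homogeneous Dirichlet trace on $S_{\delta_0}^+(p)$, while $w_n$ is the harmonic remainder carrying the actual boundary data. The oscillation hypothesis renders $w_n$ uniformly bounded in $C^1_{loc}$. By Theorem~\ref{mainthm-1}, the right-hand sides converge weakly to $m(p)\,\delta_p$, so $v_n \to m(p)\,G(\cdot,p)$ in $C^1_{loc}(\overline{D_{\delta_0}^+(p)}\setminus\{p\})$, where $G(\cdot,p)$ is the Green function of the mixed problem: its leading singularity is $-\frac{1}{2\pi}\log|x-p|$ when $p$ lies in the open half-disk, $-\frac{1}{\pi}\log|x-p|$ when $p\in L_{\delta_0}(p)\setminus\{0\}$ by reflection across $L_{\delta_0}$, and the Frobenius-type term $-\frac{1}{\pi(1+\alpha)}\log|x|$ when $p=0$, reflecting the conical scaling forced by the weight $|x|^{2\alpha}$ at the corner.

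Third, I apply Proposition~\ref{prop-poho} to $(u_n,\Psi_n)$ on $D_{\delta_0}^+(p)$ and send $n\to\infty$. The interior concentration of $\int 2V|x|^{2\alpha}e^{2u_n}-V|x|^\alpha e^{u_n}|\Psi_n|^2$ collapses to $m(p)$ up to the appropriate scaling factor in the conical case, while the spinor contributions on $S_{\delta_0}^+$ vanish thanks to the chirality condition $B^\pm\Psi_n=0$ together with the uniform bound on $|\Psi_n|$. The surviving boundary terms are the quadratic expressions in $|\nabla u_n|^2$ and $(x\cdot\nabla u_n)\partial_n u_n$; substituting the Green-function asymptotic and absorbing $w_n$ by the oscillation bound, these evaluate to quadratic expressions in $m(p)$. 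The identity reads $m(p)^2 = C_p\, m(p)$ with $C_p=4\pi$, $2\pi$, or $2\pi(1+\alpha)$ in the three respective cases, and since $m(p)>0$ by Theorem~\ref{mainthm-1}, dividing through yields the stated values. The principal difficulty I anticipate is the case $p=0$, where the corner of $D_{\delta_0}^+$, the Neumann/chirality boundary conditions, and the weight $|x|^{2\alpha}$ interact simultaneously: the weighted Green function at a boundary conical point must be constructed explicitly, and the factor $(1+\alpha)$ must be tracked through both the boundary scaling $\int_{S_\rho^+}|x|^{2\alpha}\,d\sigma\sim \rho^{2\alpha+1}$ and the Frobenius exponent of $G(\cdot,0)$ entering via $x\cdot\nabla G$, so that the two occurrences combine consistently into the single factor on the right side of $m(p)^2 = 2\pi(1+\alpha)\,m(p)$.
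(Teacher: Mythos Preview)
Your overall strategy matches the paper's: subtract a bounded harmonic correction $w_n$ (controlled by the oscillation hypothesis), show that $v_n:=u_n-\min_{S^+_{\delta_0}}u_n-w_n$ converges in $C^1_{loc}$ away from $p$ to $m(p)$ times a Green function, feed this into the Pohozaev identity (Proposition~\ref{prop-poho}), and read off an algebraic equation for $m(p)$. That part is fine.

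The genuine error is your Green function at $p=0$. You claim the leading singularity is $-\tfrac{1}{\pi(1+\alpha)}\log|x|$, attributing this to ``conical scaling forced by the weight $|x|^{2\alpha}$''. But in the local system~(\ref{Eq-B}) the differential operator is the \emph{flat} Euclidean Laplacian; the weight $|x|^{2\alpha}$ appears only in the source term and in the Neumann datum, not in the operator. The Green function for the mixed problem $-\Delta G=\delta_0$ in $D^+_{\delta_0}$, $\partial_n G=0$ on $L_{\delta_0}$, $G=0$ on $S^+_{\delta_0}$ therefore has singularity $-\tfrac{1}{\pi}\log|x|$ at the origin, exactly as at any other boundary point. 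Consequently $v_n\to \tfrac{m(p)}{\pi}\log\tfrac{1}{|x|}+R(x)$ in $C^1_{loc}$ (this is precisely~(\ref{8.2}) in the paper), and the left side of the Pohozaev identity on $S^+_\delta$ yields $\tfrac{m(p)^2}{2\pi}$, not $\tfrac{m(p)^2}{2\pi(1+\alpha)^2}$.

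The factor $(1+\alpha)$ enters from a \emph{single} source: the prefactor multiplying the mass term on the right side of Proposition~\ref{prop-poho}, which arises because $x\cdot\nabla(|x|^{2\alpha})=2\alpha\,|x|^{2\alpha}$ in the integration by parts of $x\cdot\nabla u$ against $|x|^{2\alpha}e^{2u}$. That gives $(1+\alpha)\,m(p)$ on the right, and the identity $\tfrac{m(p)^2}{2\pi}=(1+\alpha)\,m(p)$ delivers $m(p)=2\pi(1+\alpha)$. Had you actually carried your Frobenius exponent through the quadratic boundary term, you would have obtained $m(p)=2\pi(1+\alpha)^3$. There is no ``combination of two occurrences'' of $(1+\alpha)$: the Green function is insensitive to the conical weight, and all of the $\alpha$-dependence sits in the Pohozaev identity itself.
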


\

\section{Preliminaries}\label{preli}

In this section, we will first recall the definition of surfaces with conical singularities, following \cite{T1}. Then we shall recall  the chirality boundary condition for the Dirac operator $\slashiii{D}$, see e.g. \cite{HMR}. In particular, we will see that under the chirality boundary conditions $B^{\pm}$, the Dirac operator $\slashiii D$ is self-adjoint.

A conformal
metric $g$ on a Riemannian surface $\Sigma$ (possibly with
boundary) has a conical singularity of order $\alpha$ (a real number
with $\alpha>-1$) at a point $p\in \Sigma\cup
\partial \Sigma$ if in some neighborhood of $p$
$$g=e^{2u}|z-z(p)|^{2\alpha}|dz|^2$$
where $z$ is a coordinate of $\Sigma$ defined in this neighborhood
and $u$ is smooth away from $p$ and  continuous at $p$. The point
$p$ is then said to be a conical singularity of {\it angle}
$\theta=2\pi(\alpha+1)$ if $p\notin\partial \Sigma$ and a {\it
corner} of angle $\theta =\pi(\alpha +1)$ if $p\in \partial \Sigma$.
 For example, a (somewhat idealized) American football has two singularities of equal angle, while
a teardrop has only one singularity. Both these examples correspond
to the case $-1<\alpha <0$; in case $\alpha >0$, the angle is larger
than $2\pi$, leading to a different geometric picture. Such
singularities also appear in orbifolds and branched coverings. They
can also describe the ends of complete Riemann surfaces with
finite total curvature. If $(M, g)$ has conical
singularities of order $\alpha_1, \alpha_2, \cdots, \alpha_m$ at
$q_1, q_2, \cdots, q_m$, then $g$ is said to represent the
divisor $\A= \Sigma^m_{j=1}\alpha_j q_j$. Importantly, the presence of such conical singularities destroys conformal invariance, because the conical points are different from the regular ones.

The chirality boundary condition for the Dirac operator $\slashiii{D}$ is  a natural boundary
condition for spinor part $\psi$. Let $M$ be a compact
Riemann surface with $\partial M\neq \emptyset$ and with a fixed
spin strcuture, admitting a chirality operator $G$, which is an
endomorphism of the spinor bundle $\Sigma M$ satisfying:
$$
G^2=I,\qquad \la G\psi ,G\varphi\ra=\la \psi, \varphi\ra,
$$
and $$ \nabla_{X}(G\psi)=G\nabla_{X}\psi,\qquad X\cdot
G\psi=-G(X\cdot \psi),
$$ for any $X\in \Gamma (TM), \psi, \varphi\in \Gamma (\Sigma M)$.
Here $I$ denotes the identity endomorphism of $\Sigma M$.

We usually take $G=\gamma(\omega_2)$, the Clifford multiplication by
the complex volume form $\omega_2=ie_1e_2$, where $e_1,e_2$ is a
local orthonormal frame on $M$.

Denote by $$ S:=\Sigma M|_{\partial M}$$ the restricted spinor bundle
with induced Hermitian product.

Let $\overrightarrow{n}$ be the outward unit normal vector field on
$\partial M$. One can verify that
$\overrightarrow{n}G:\Gamma(S)\rightarrow \Gamma(S)$ is a
self-adjoint endomorphism  satisfying
$$
(\overrightarrow{n}G)^2=I,\qquad \la \overrightarrow n G\psi
,\varphi\ra=\la \psi, \overrightarrow n G\varphi\ra,
$$
Hence, we can decompose $S=V^{+}\bigoplus V^{-}$, where $V^{\pm}$ is
the eigensubbundle corresponding to the eigenvalue $\pm 1$. One
verifies that the orthogonal projection onto the eigensubbundle
$V^{\pm}$:
\begin{eqnarray*}
B^{\pm}:L^2(S)&\rightarrow& L^2(V^{\pm})\\
\psi &\mapsto &\frac 12(I\pm\overrightarrow nG)\psi,
\end{eqnarray*}
defines a local elliptic boundary condition for the Dirac operator
$\slashiii{D}$ , see e.g. \cite{HMR}. We say that a spinor $\psi \in
L^2(\Gamma(\Sigma M))$ satisfies the chirality boundary conditions
$B^{\pm}$ if $$ B^\pm\psi|_{\partial M}=0.
$$ It is well known (see e.g. \cite{HMR}) that if $\psi,\phi\in L^2(\Gamma(\Sigma M))$
satisfy the chirality boundary conditions $B^\pm$ then
$$ \la \overrightarrow n\cdot \psi,\varphi\ra=0, \quad \text{ on }
\partial M.
$$
In particular,
\begin{equation}\label{2.11}
\int_{\partial M}\la \overrightarrow n\cdot \psi,\varphi\ra=0.
\end{equation}
It follows that  the Dirac operator $\slashiii D$ is self-adjoint
under the chirality boundary conditions $B^{\pm}$.

It may be helpful if we recall that on a surface the (usual) Dirac operator
$\slashiii{D} $ can be seen as the (doubled) Cauchy-Riemann
operator. Consider $\R^2$ with the Euclidean metric $ds^2+dt^2$. Let
$e_1=\frac{\partial}{\partial s}$ and $e_2=\frac{\partial}{\partial
t}$ be the standard orthonormal frame. A spinor field is simply a
map $\Psi:\R^2\to \Delta_2=\C^2$, and the actions of $e_1$ and $e_2$ on
spinor fields can be identified by multiplication with matrices
\[e_1=\left(\begin{matrix}0& i\\ i&0 \end{matrix}\right),
\quad e_2=\left(\begin{matrix}0& 1\\ -1&0 \end{matrix}\right).\] If
$\Psi:=\left(\begin{matrix} \ds f
\\ \ds g\end{matrix}\right)
:\R^2\to \C^2$ is a spinor field, then the Dirac operator is
\[\slashiii{D}\Psi=\ds \left(\begin{matrix}0& i\\ i&0
\end{matrix}\right) \left(\begin{matrix} \ds \frac{\partial
f}{\partial s}\\ \ds \frac{\partial g}{\partial s}
\end{matrix}\right)+
\left(\begin{matrix}0& 1\\ -1&0 \end{matrix}\right)
\left(\begin{matrix} \ds \frac{\partial f}{\partial t} \\
\ds\frac{\partial g}{\partial t}
\end{matrix}\right)=
2i\left(\begin{matrix} \ds \frac{\partial g}{\partial  z}
\\ \ds\frac{\partial f}{\partial \bar z}\end{matrix}\right),\]
where
\[\frac{\partial}{\partial z}=\frac 12 \left(\frac{\partial }{\partial s}
- i\frac{\partial }{\partial t}\right), \quad
\frac{\partial}{\partial \bar z}=\frac 12 \left(\frac{\partial
}{\partial s} + i\frac{\partial }{\partial t}\right).\] Therefore,
the elliptic estimates developed for (anti-) holomorphic functions
can be used to study the Dirac equation.

If $M$ be the upper-half Euclidean space $\R^{2}_{+}$, then the
chirality operator is $G=ie_1e_2=\left(\begin{matrix}1& 0\\ 0&-1
\end{matrix}\right)$. Note that $\overrightarrow n=-e_2$, we get
that $$ B^{\pm}=\frac 12(I\pm \overrightarrow n\cdot G)=\frac
12\left(\begin{matrix}1& \pm 1\\ \pm 1& 1 \end{matrix}\right).
$$ By the standard chirality decomposition, we can write $\psi =\left(\begin{matrix}\psi_+\\ \psi_{-}
\end{matrix}\right)$, and then the boundary condition  becomes
$$
\psi_{+}=\mp \psi_{-} \quad \text{ on } \partial M.
$$

Without loss of generality, in the sequel, we shall only consider the chirality boundary
condition $B=B^+$.

We have the following geometric property:
\begin{prop}\label{prop-c} The functional $E_B(u,\psi )$ is invariant under conformal diffeomorphisms $\varphi
:M\rightarrow M$ preserving the divisor, that is, $\varphi*\A=\A$. In other word, if we write that $\varphi^*(g)=\lambda^2 g$, where $\lambda>0$ is the conformal factor of the conformal map $\varphi$, and set
\beq\label{n3.1}\ba{rcl}
\ds\vs \widetilde{u} &=&\ds u\circ \varphi -ln\lambda, \\
\ds \widetilde{\psi } &=&\ds \lambda^{-\frac 12}\psi \circ \varphi,
\ea \eeq  then  $E_B(\tilde
u,\tilde \psi)=E_B(u,\psi )$. In particular, if $(u,\psi)$ is a solution of (\ref{Eq-NB}), so is $(\tilde
u,\tilde \psi)$.
\end{prop}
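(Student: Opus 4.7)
The plan is to split the transformation $(u,\psi)\mapsto(\tilde u,\tilde\psi)$ into two conceptually independent steps: first a diffeomorphism pull-back along $\varphi$, and then a pure conformal rescaling from $\varphi^{*}g=\lambda^{2}g$ back to $g$. Invariance of $E_{B}$ is then checked for each step separately. The hypothesis $\varphi_{*}\A=\A$ is precisely what keeps the first step well-posed: since $\varphi$ permutes the conical points preserving their orders, $\varphi^{*}g$ again represents the divisor $\A$, so the singular support of $K_{g}$ at $\{q_{j}\}$ is mapped to itself.

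For the diffeomorphism step, every constituent of $E_{B}(\cdot,\cdot;g)$ is a diffeomorphism-covariant geometric object paired with the fields, so the usual change-of-variables formula gives $E_{B}(u,\psi;g)=E_{B}(u\circ\varphi,\psi\circ\varphi;\varphi^{*}g)$. For the conformal step, set $f=\log\lambda$ and invoke the standard conformal transformation laws on the smooth locus $M\setminus\{q_{j}\}$: $dv_{e^{2f}g}=e^{2f}dv_{g}$, $d\sigma_{e^{2f}g}=e^{f}d\sigma_{g}$, $|\nabla_{e^{2f}g}w|^{2}_{e^{2f}g}=e^{-2f}|\nabla_{g}w|^{2}_{g}$; the curvature formulas $K_{e^{2f}g}=e^{-2f}(K_{g}-\Delta_{g}f)$ and $h_{e^{2f}g}=e^{-f}(h_{g}+\partial_{\nu}f)$; and the conformal covariance of the Dirac operator, $\slashiii D_{e^{2f}g}(e^{-f/2}\psi)=e^{-3f/2}\slashiii D_{g}\psi$ together with $|e^{-f/2}\psi|^{2}_{e^{2f}g}=e^{-f}|\psi|^{2}_{g}$. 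With the prescribed ansatz $\tilde u=(u\circ\varphi)-f$ and $\tilde\psi=e^{-f/2}(\psi\circ\varphi)$, the exponential-type terms $-e^{2u}dv$, $e^{u}|\psi|^{2}dv$, the Dirac coupling $\langle\slashiii D\psi,\psi\rangle_{g}dv$, and the boundary term $-ce^{u}d\sigma$ each match their counterparts pointwise, because the conformal weights $(1,-1/2)$ of $(u,\psi)$ have been chosen compatibly.

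The main obstacle lies in the remaining $u$-linear terms $\tfrac12|\nabla u|^{2}dv$, $K_{g}u\,dv$, and $h_{g}u\,d\sigma$, whose transformed versions contain cross-terms in $f$ and its derivatives. I expect these cross-terms to split into two groups: a $u$-linear cross-term which cancels after one integration by parts using the identity $\int_{M}(\Delta_{g}f)u\,dv_{g}=-\int_{M}\langle\nabla u,\nabla f\rangle dv_{g}+\int_{\partial M}u\partial_{\nu}f\,d\sigma_{g}$ (licit on $M\setminus\{q_{j}\}$, where $u$ is smooth), and a purely $f$-dependent residue which collapses to zero after a second integration by parts together with the key relation $K_{g}-\Delta_{g}f=e^{2f}(K_{g}\circ\varphi)$ (a direct consequence of the geometric invariance $K_{\varphi^{*}g}=K_{g}\circ\varphi$ combined with the conformal curvature formula), and finally the singular Gauss--Bonnet formula $\int_{M}K_{g}dv_{g}+\int_{\partial M}h_{g}d\sigma_{g}=2\pi(\chi(M)+|\A|)$ recalled earlier in the paper. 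The divisor-preserving hypothesis enters exactly here: it guarantees that the distributional contributions of $K_{g}$ and $K_{\varphi^{*}g}$ at the conical points coincide, so the integrations by parts do not produce stray boundary atoms at the $q_{j}$. Combining the two steps yields $E_{B}(\tilde u,\tilde\psi;g)=E_{B}(u,\psi;g)$; the final assertion that $(\tilde u,\tilde\psi)$ solves (\ref{Eq-NB}) whenever $(u,\psi)$ does is then immediate, since the Euler--Lagrange equations are preserved under any transformation leaving the functional invariant.
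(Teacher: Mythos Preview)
The paper states Proposition~\ref{prop-c} without proof, so there is no argument in the text to compare against; your two-step decomposition (diffeomorphism pull-back followed by a conformal rescaling from $\varphi^{*}g$ back to $g$) is the standard route and the treatment of the exponential, Dirac, and boundary-exponential terms is correct.

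The genuine weak point is the ``purely $f$-dependent residue''. After the $u$-linear cross terms cancel by one integration by parts as you describe, the leftover is
\[
\pm\Bigl(\tfrac12\int_{M}|\nabla f|^{2}\,dv_{g}+\int_{M}K_{g}f\,dv_{g}+\int_{\partial M}h_{g}f\,d\sigma_{g}\Bigr),
\]
which is exactly the Liouville/Polyakov conformal anomaly. Your proposed mechanism---a second integration by parts, the identity $K_{g}-\Delta_{g}f=e^{2f}(K_{g}\circ\varphi)$, and the singular Gauss--Bonnet formula---does not force this quantity to vanish: Gauss--Bonnet only controls $\int K_{g}$ and $\int h_{g}$, not their $f$-weighted versions, and the curvature identity merely rewrites $\Delta_{g}f$ without producing a cancellation. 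Already on the round sphere with a non-isometric M\"obius transformation this residue is nonzero, so the exact equality $E_{B}(\tilde u,\tilde\psi)=E_{B}(u,\psi)$ fails in general. What your argument does establish is that the difference is a constant depending only on $\varphi$ (and $g$), not on $(u,\psi)$; this is enough for the ``In particular'' clause---solutions of (\ref{Eq-NB}) map to solutions---which is the part actually used later in the paper. If you want to salvage exact invariance you would need to show the anomaly vanishes specifically for divisor-preserving conformal automorphisms of $(M,\A,g)$, and that requires a separate argument (or a restriction on $M$) beyond what you have written.
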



\

\section{The local singular  super-Liouville boundary problem}\label{loc}

In this section, we shall first derive the local version of the super-Liouville boundary problem. Then we shall analyze the regularity of solutions under the small energy condition.

First we take a point $p\in M^o$,  choose a small neighborhood $U(p) \subset M^o$, and define an isothermal coordimate system $x=(x_1,x_2)$ centered at $p$, such that $p$ corresponds to $x=0$ and
$g=e^{2\phi}|x|^{2\alpha}(dx_1^2+dx_2^2)$ in $D_{r}(0)=\{(x_1,x_2) \in \R^2 \mid x_1^2+x_2^2< r^2\}$, where $\phi$ is smooth away from $p$,
continuous at $p$ and $\phi(p)=0$. We can choose such a neighborhood small enough so that  if $p$ is a conical singular point of $g$,
then $U(p)\cap {\A}=\{p\}$ and $\alpha>0 $, while, if $p$ is a smooth point of $g$, then  $U(p)\cap {\A}=\emptyset$
and $\alpha=0 $. Consequently, with respect to the isothermal coordinates, we can obtain the local version of the  singular super-Liouville-type equations,
\begin{equation}\label{Eq-L}
\left\{
\begin{array}{rcl}
-\Delta u(x) &=& 2V^2(x)|x|^{2\alpha}e^{2u(x)}-V(x)|x|^{\alpha}e^{u(x)}|\Psi|^2  \quad\\
\slashiii{D}\Psi &=& -V(x)|x|^{\alpha}e^{u(x)}\Psi
\end{array} \text { in } D_{r}(0),
\right.
\end{equation}
which is no any boundary condition since $p$ is a interior point of $M$. Here $\Psi=|x|^{\frac {\alpha}2}e^{\frac{\phi(x)}{2}}\psi$, $V(x)$ is a $C^{1,\beta}$ function and satisfies $0< a\leq V(x)\leq b$. The detailed arguments can be found in the section 3 of \cite{JZZ3}.  We also assume that $(u,\Psi)$ satisfy the energy condition:
\begin{equation}\label{Co-L}
\int_{D_r(0)}|x|^{2\alpha}e^{2u}+|\Psi|^4dx<+\infty.
\end{equation}

\
\
We put $D_r:=D_r(0)$. We say that $(u, \Psi)$ is a weak solution of  \eqref{Eq-L} and \eqref{Co-L}, if $u\in W^{1,2}(D_r)$
and $\Psi \in W^{1,\frac 43}(\Gamma (\Sigma D_r))$ satisfy
\begin{eqnarray*}
\int_{D_r} \nabla u\nabla\phi dx &=& \int_{D_r} (2V^2(x)|x|^{2\alpha}e^{2u}-V(x)|x|^{\alpha}e^u |\Psi|^2)\phi dx, \\
\int_{D_r} \langle \Psi,\slashiii{D} \xi \rangle dx &=&-\int_{D_r} V(x)|x|^{\alpha}e^u \langle \Psi,\xi \rangle dx,
\end{eqnarray*}
for any $\phi\in C^\infty_0(D_r)$ and any spinor $\xi \in C^\infty \cap  W_0^{1,\frac 43} (\Gamma (\Sigma D_r))$.
A weak solution is a classical solution by the following:

\begin{prop}\label{prop-a} Let $(u,\Psi)$ be a weak solution  of \eqref{Eq-L} and \eqref{Co-L}.
Then $(u,\Psi)\in C^2(D_r)\times C^2(\Gamma(\Sigma D_r))$.
\end{prop}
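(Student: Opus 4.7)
The plan is a standard elliptic bootstrap argument, with the new ingredient being the careful treatment of the singular conformal weight $|x|^{\alpha}$ at $0$. The starting point is to observe that the energy bound \eqref{Co-L} controls the $L^{1}$-norm of both nonlinearities: by H\"older's inequality,
\[\int_{D_{r}} V|x|^{\alpha}e^{u}|\Psi|^{2}\,dx \leq b\Bigl(\int_{D_{r}}|x|^{2\alpha}e^{2u}\Bigr)^{1/2}\Bigl(\int_{D_{r}}|\Psi|^{4}\Bigr)^{1/2}<\infty,\]
so $\Delta u \in L^{1}(D_{r})$, and similarly the coefficient $V|x|^{\alpha}e^{u}$ in the Dirac equation lies in $L^{2}(D_{r})$. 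On any compact subdomain $K\subset\subset D_{r}\setminus\{0\}$, the weight is smooth and bounded above and below, so the system reduces to the regular super-Liouville system and the bootstrap of \cite{JWZ1} (combined with standard Dirac elliptic theory) immediately yields $C^{\infty}$ regularity on $K$.

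The main obstacle, therefore, is local regularity at $0$. Here I would invoke a Brezis--Merle type inequality adapted to singular data (see \cite{BM, BT}): for $\rho$ small enough, absolute continuity of the integral makes $\|2V^{2}|x|^{2\alpha}e^{2u}-V|x|^{\alpha}e^{u}|\Psi|^{2}\|_{L^{1}(D_{\rho})}$ as small as we wish, and Brezis--Merle then gives $e^{qu}\in L^{1}_{loc}$ for some $q>2$. Combining with the weighted energy bound via H\"older, this upgrades $V|x|^{\alpha}e^{u}$ to $L^{q_{1}}(D_{\rho/2})$ for some $q_{1}>2$.

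With this improved coefficient, $L^{p}$-estimates for the Dirac operator applied to $\slashiii{D}\Psi=-V|x|^{\alpha}e^{u}\Psi$ give $\Psi\in W^{1,s}_{loc}\hookrightarrow L^{t}_{loc}$ with $t$ strictly larger than the starting exponent $4$. Feeding this back into the equation for $u$, the right-hand side acquires higher $L^{p}$-integrability, and $L^{p}$-theory for the Laplacian gives $u\in W^{2,p}_{loc}\hookrightarrow C^{0,\beta}_{loc}$. An alternating iteration between the two equations boosts the regularity of both components. Finally, Schauder estimates applied to the $u$-equation (using that, for $\alpha>0$, the weight $|x|^{2\alpha}$ is H\"older continuous up to $0$ so the right-hand side is H\"older continuous) and the corresponding H\"older theory for the Dirac operator give $(u,\Psi)\in C^{2}(D_{r})\times C^{2}(\Gamma(\Sigma D_{r}))$. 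The hardest step is the first upgrade past the critical exponent at $0$; once Brezis--Merle has been applied in the weighted setting, the remaining bootstrap is routine.
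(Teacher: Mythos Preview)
Your outline is correct and follows the same Brezis--Merle-then-bootstrap route as the references the paper invokes (\cite{JWZ1} for $\alpha=0$, \cite{JZZ3} for $\alpha>0$); the paper itself gives no proof beyond those citations. The only minor difference is that the cited works first extract $u^{+}\in L^{1}$ directly from the weighted energy via the H\"older trick $\int e^{2su}\le(\int|x|^{2\alpha}e^{2u})^{s}(\int|x|^{-2t\alpha})^{1-s}$ with $s=t/(t+1)$, whereas you obtain the needed control on the harmonic part from the hypothesis $u\in W^{1,2}$---either device suffices.
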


Note that when $\alpha=0$ this proposition is proved in \cite{JWZ1} (see Proposition 4.1).  When $\alpha>0$, this proposition is proved in \cite {JZZ3}(see Proposition 3.1).

\
\

For $p\in \partial M$,  we also can choose a small geodesic ball $U(p) \subset M$ and define an isothermal coordimate system $x=(x_1,x_2)$ centered at $p$, such that $p$ corresponds to $x=0$ and
$g=e^{2\phi}|x|^{2\alpha}(dx_1^2+dx_2^2)$ in $\overline{D}^+_{r}(0)=\{(s,t)\in \R^2 \mid s^2+t^2< r^2, t\geq 0 \}$, where $\phi$ is smooth away from $p$ and continuous at $p$. We can choose such a geodesic ball small enough so that  if $p$ is a conical singular point of $g$,
then $U(p)\cap {\A}=\{p\}$ and $\alpha>0 $, while, if $p$ is a smooth point of $g$, then  $U(p)\cap {\A}=\emptyset$
and $\alpha=0 $. Set $L_r=\partial D_r^+\cap \partial {\R}^2_+$, and $S^+_r=\partial D^+_r\cap \R^2_+$. Also in the sequel, we will set $L_r(x_0)=\partial D_r^+(x_0)\cap \partial {\R}^2_+$, and $S^+_r(x_0)=\partial D^+_r(x_0)\cap \R^2_+$. Consequently, with respect to the isothermal coordinates, $(u,\psi)$ satisfies
\begin{equation}
\left\{
\begin{array}{rcll}
-\Delta u(x) &=&  e^{2\phi(x)}|x|^{2\alpha}(2e^{2u(x)}-e^{u(x)}|\psi|^2(x) -K_g)\quad &\text {in } D^+_{r},\\
\slashiii{D}(e^{\frac {\phi(x)}2}|x|^{\frac {\alpha}2}\psi) &=& -e^{\phi(x)}|x|^{\alpha}e^{u(x)}(e^{\frac {\phi(x)}2}
|x|^{\frac {\alpha}2}\psi)\quad & \text {in } D^+_{r},\\
\ds\vs \frac {\partial u}{\partial n}& = & e^{\phi(x)}|x|^{\alpha}(ce^u-h_g),\quad & \text { on } L_r,\\
B(e^{\frac {\phi(x)}2}|x|^{\frac {\alpha}2}\psi) &=& 0,\quad & \text { on } L_r.
\end{array}
\right.   \label{eq-5}
\end{equation}
Here $\Delta=\partial^2_{x_1x_1}+\partial^2_{x_2x_2}$ is the usual Laplacian, and the Dirac operator
$\slashiii{D} $ can be seen as doubled Cauchy-Riemann operator, $B$ is the chirality boundary operator of spinors.

Note that the relation between the two Gaussian curvatures and between the two geodesic curvatures are respectively
\begin{equation*}
\left\{
\begin{array}{rcl}
-\Delta\phi &=& e^{2\phi}|x|^{2\alpha}K_g,\\
\frac {\partial \phi}{\partial n} &=& e^{\phi}|x|^{\alpha}h_g.
\end{array}
\right.
\end{equation*}
By standard elliptic regularity we conclude that $\phi\in W^{2,p}_{loc}(D^+_r\cup L_r)$ for some $p>1$ if $\alpha\geq 0$ and
 if the curvature $K_g$ and $h_g$ of $M$ is regular enough. Therefore, by Sobolev embedding, $\phi\in C^{1}_{loc}(D^+_r\cup L_r)$.
If we denote $V(x)=e^{\phi}$,  $W_1(x)=e^{2\phi}|x|^{2\alpha}K_g$ and $W_2(x)=e^{\phi}|x|^{\alpha}h_g$, then  $0< a\leq V(x)\leq b$,  $W_1(x)$ is in $L^p(D^+_r)$ and $W_2(x)$ is in $L^p(L_r)$ for all $p>1$ if the curvature $K_g$ and $h_g$ of $M$ is regular enough. Therefore,  the equations (\ref{eq-5}) can be rewritten as:
\begin{equation*}
\left\{
\begin{array}{rcll}
-\Delta u(x) &=& 2V^2(x)|x|^{2\alpha}e^{2u(x)}-V(x)|x|^{\alpha}e^{u(x)}|\Psi|^2-W_1(x),  \quad &\text {in } D^+_{r}, \\
\slashiii{D}\Psi &=& -V(x)|x|^{\alpha}e^{u(x)}\Psi, \quad &\text {in } D^+_{r}, \\
\frac {\partial u}{\partial n}& = & cV(x)|x|^{\alpha}e^u-W_2,\quad & \text { on } L_r, \\
B(\Psi) &=& 0,\quad & \text { on } L_r.
\end{array}
\right.
\end{equation*}
Furthermore, let $w(x)$ satisfy
\begin{equation*}
\left\{
\begin{array}{rcll}
-\Delta w(x) &=& -W_1(x),  \quad &\text {in } D^+_{r},\\
\frac {\partial u}{\partial n}& = & -W_2(x), \quad & \text { on } L_r, \\
w(x) &=& 0,   \quad  &\text{ on } S^+_r.
\end{array}
\right.
\end{equation*}
It is easy to see that $w(x)$ is in $C^{2}(D^+_r)\cap C^{1}(D^+_r\cup L_r)$. Setting $v(x)=u(x)-w(x)$, then $(v,\Psi)$ satisfies
\begin{equation*}
\left\{
\begin{array}{rcll}
-\Delta v(x) &=& 2V^2(x)|x|^{2\alpha}e^{2v(x)}-V(x)|x|^{\alpha}e^{v(x)}|\Psi|^2,  \quad &\text {in } D^+_{r}, \\
\slashiii{D}\Psi &=& -V(x)|x|^{\alpha}e^{v(x)}\Psi, \quad &\text {in } D^+_{r}, \\
\frac {\partial v}{\partial n}& = & cV(x)|x|^{\alpha}e^{v(x)},\quad & \text { on } L_r, \\
B(\Psi) &=& 0,\quad & \text { on } L_r.
\end{array}
\right.
\end{equation*}
Here $\alpha \geq 0$, $V(x)$ is in $C^{1}_{loc}(D^+_r\cup L_r)$ and satisfies $0< a\leq V(x)\leq b$. Thus we get the local system (\ref{Eq-B}) of the boundary problem  (\ref{Eq-NB}).

\
\

As the interior case, we can also  define $(u,\Psi)$ be a weak solution of (\ref{Eq-B}) if $u\in
W^{1,2}(D^+_r)$ and $\Psi \in W^{1,\frac 43}_B(\Gamma (\Sigma D^+_r))$
satisfy
\begin{eqnarray*}
\int_{D^+_r} \nabla u\nabla\phi dx&=& \int_{D^+_r} (2V^2(x)|x|^{2\alpha}e^{2u(x)}-V(x)|x|^{\alpha}e^{u(x)}|\Psi|^2)\phi dx+\int_{ L_r}(cV(x)|x|^{\alpha}e^{v(x)})\phi d\sigma \\
\int_{D^+_r} \langle \Psi,\slashiii{D}\xi\rangle
dx &=& -\int_{D^+_r} V(x)|x|^{\alpha}e^{v(x)}\langle \Psi,\xi\rangle dx
\end{eqnarray*}
for any  $\phi\in C_0^{\infty}({D^+_r\cup L_r})$ and any spinor
$\xi\in C^\infty_0(\Gamma (\Sigma (D^+_r\cup L_r)))\cap W^{1,\frac 43}_B(\Gamma (\Sigma D^+_r))$. Here

$$
W^{1,\frac 43}_B(\Gamma (\Sigma D^+_r))=\{\psi| \psi\in W^{1,\frac
43}(\Gamma (\Sigma D^+_r)), B\psi|_{L_r}=0 \}.
$$

\
\

For weak solutions of (\ref{Eq-B}) we also have the following regularity result.
\begin{prop}\label{prop-b} Let $(u,\Psi)$ be a weak solution  of (\ref{Eq-B})
with the energy condition
\begin{equation}\label{Eq-BC}
\int_{D^+_r}|x|^{2\alpha}e^{2u}+|\Psi|^4 dv+\int_{L_r}|x|^{\alpha}e^u d\sigma
<\infty.
\end{equation} Then $u\in C^{2}(D^+_r)\cap C^{1}(D^+_r\cup L_r)$
and
 $\Psi \in C^{2}(\Gamma(\Sigma D^+_r ))\cap C^{1}(\Gamma(\Sigma (D^+_r\cup L_r)))$.
\end{prop}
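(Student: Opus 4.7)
The interior regularity $(u,\Psi) \in C^2(D^+_r) \times C^2(\Gamma(\Sigma D^+_r))$ follows directly from Proposition \ref{prop-a} applied on compact subsets of $D^+_r$, so my task is to upgrade the weak solution to $C^1$ up to the free boundary portion $L_r$. I will proceed by a standard bootstrap: first boost the integrability of the exponential terms near $L_r$ via a boundary Brezis--Merle type estimate, then feed back into linear elliptic estimates for the Laplacian with Neumann datum and for the Dirac operator with chirality boundary condition.

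The first step is to improve the local integrability of $e^u$ near $L_r$. Splitting $u = u_1 + u_2$, I will let $u_1$ absorb both the bulk right-hand side (which is in $L^1_{loc}(D_r^+)$ by \eqref{Eq-BC}) and the Neumann datum $cV(x)|x|^\alpha e^u \in L^1(L_r)$, while $u_2$ solves the homogeneous problem and is therefore smooth. By doubling across $L_r$ (even reflection, which is natural for Neumann-type data), the problem for $u_1$ reduces to an interior Brezis--Merle estimate, yielding $e^{pu}|x|^{p\alpha} \in L^1_{loc}(D^+_r \cup L_r)$ for some $p > 2$. Since $\alpha \geq 0$, the weight $|x|^\alpha$ is locally bounded and introduces no additional singularity at $0 \in L_r$ beyond what is already handled in the interior singular case treated in \cite{JZZ3}.

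With $|x|^\alpha e^u \in L^p_{loc}$, $p > 2$, the right-hand side of the $u$-equation lies in $L^{p/2}_{loc}$ and the Neumann datum in $L^p_{loc}(L_r)$. Boundary $L^p$ theory for the Neumann Laplacian then yields $u \in W^{2,p/2}_{loc}(D^+_r \cup L_r)$, hence $u \in C^{0,\gamma}$ up to $L_r$. The Dirac equation $\slashiii{D}\Psi = -V(x)|x|^\alpha e^u \Psi$ now has a continuous potential, and the chirality boundary condition $B\Psi = 0$ is a local elliptic boundary condition for $\slashiii{D}$ (Section \ref{preli}). Combined with the a priori control $\Psi \in L^4_{loc}$, standard $L^p$ estimates for this first-order elliptic boundary value problem (which can be reduced to interior estimates by extending $\Psi$ across $L_r$ using the chirality operator $G$) give $\Psi \in W^{1,q}_{loc}(D^+_r \cup L_r)$ for all $q < \infty$, hence $\Psi \in C^{0,\gamma'}$ up to $L_r$.

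With $u$ and $\Psi$ now Hölder continuous up to $L_r$, both the bulk sources and the Neumann datum become Hölder continuous, so Schauder estimates for the oblique boundary problem upgrade $u$ to $C^{1,\gamma}_{loc}(D^+_r \cup L_r)$, and the corresponding Schauder theory for the Dirac operator with chirality boundary condition (again via the $G$-reflection) upgrades $\Psi$ to $C^{1,\gamma'}$ up to $L_r$. Interior $C^2$ regularity is already guaranteed by Proposition \ref{prop-a}. The main technical subtlety is to make the boundary Brezis--Merle step and the reflection argument for $\slashiii{D}$ work uniformly as one approaches the possibly conical point $0 \in L_r$; this is the point where the argument could fail if $\alpha < 0$, but the standing assumption $\alpha \geq 0$ keeps the weight continuous at $0$ and renders the local system a controlled perturbation of the smooth ($\alpha = 0$) case already treated in \cite{JZZ1, JWZZ2}.
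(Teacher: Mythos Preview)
Your bootstrap via boundary Brezis--Merle, even reflection for the Neumann problem, chirality reflection for the Dirac operator, and Schauder theory is correct and is precisely the argument the paper defers to (via \cite{JWZZ2} for $\alpha=0$). The paper's own proof adds only one remark for $\alpha>0$: to feed the \cite{JWZZ2} argument one first needs $u^+\in L^1(D^+_r)$, which the paper extracts from the weighted energy bound by the Bartolucci--Tarantello H\"older trick $\int e^{2su}\le(\int|x|^{2\alpha}e^{2u})^s(\int|x|^{-2t\alpha})^{1-s}$ with $s=\frac{t}{t+1}$; under the weak-solution hypothesis $u\in W^{1,2}\subset L^1$ this bound is already available, so your treatment of the conical point as a controlled perturbation of the smooth case is justified.
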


Note that when $\alpha=0$ this proposition has been proved in \cite{JWZZ2}. When $\alpha>0$, to get the $L^1$ integral of $u^+$, we need a trick which was introduced in \cite{BT} and also was used in  \cite{JZZ3}. That is, by using the fact that for some $t>0$
$$
\int_{D^+_r}\frac{1}{|x|^{2t\alpha}}dx\leq C,
$$
we can choose $s=\frac t{t+1}\in (0,1)$ when $\alpha>0$ and $s=1$ when $\alpha=0$ such that
$$
2s\int_{D^+_r}u^+dx\leq \int_{D^+_r}e^{2su}dx\leq (\int_{D^+_r}|x|^{2\alpha}e^{2u}dx)^s
(\int_{D^+_r}|x|^{-2t\alpha}dx)^{1-s}<\infty.
$$
Once we get the $L^1$ integral of $u^+$, we can get the conclusion of Proposition \ref{prop-b} by use the same argument in \cite{JWZZ2}. We omit the proof here.

\
\

We call $(u,\psi)$ a \emph{regular} solution to \eqref{Eq-B} if $u\in C^{2}(D^+_r)\cap C^{1}(D^+_r\cup L_r)$ and
$\Psi \in C^{2}(\Gamma(\Sigma D^+_r))\cap C^{1}(\Gamma(\Sigma (D^+_r\cup L_r)))$.

\
\

Next we consider the convergence of a sequence of regular solutions to
(\ref{Eq-B})  under a smallness condition for the  energy. We assume that $(u_n, \Psi_n)$ satisfy that
\begin{equation}\label{Eq-BN}
\left\{
\begin{array}{rcll}
-\Delta u_n(x) &=& 2V^2(x)|x|^{2\alpha}e^{2u_n(x)}-V(x)|x|^{\alpha}e^{u_n(x)}|\Psi_n|^2,  \quad &\text {in } D^+_{r}, \\
\slashiii{D}\Psi_n &=& -V(x)|x|^{\alpha}e^{u_n(x)}\Psi_n, \quad &\text {in } D^+_{r}, \\
\frac {\partial u_n}{\partial n}& = & cV(x)|x|^{\alpha}e^{u_n(x)},\quad & \text { on } L_r, \\
B(\Psi_n) &=& 0,\quad & \text { on } L_r,
\end{array}
\right.
\end{equation}
with the energy condition
\begin{equation}\label{condition}\int_{D^+_r}|x|^{2\alpha}e^{2u_n}+|\Psi_n|^4 dv+\int_{L_r}|x|^{\alpha}e^{u_n} d\sigma
<C \end{equation}
for some constant $C>0$.
First, we study the small energy regularity, i.e. when the energy $\int_{D^+_r}|x|^{2\alpha}e^{2u_n}dx$ and $\int_{L_r}|x|^{\alpha}e^{u_n}dx$ are  small enough,
$u_n$ will be uniformly bounded from above. Our Lemma is:

\begin{lm}\label{lmbm4} For $\varepsilon_1<\pi$, and $\varepsilon_2<\pi$. If a sequence of regular solutions $(u_n,\Psi_n)$ to (\ref{Eq-BN}) with
 $$\int_{D^+_r}2V^2(x)|x|^{2\alpha}e^{2u_n}dx<\varepsilon_1, \quad |c|\int_{L_r}V(x)|x|^\alpha e^{u_n}d\sigma<\varepsilon_2, \quad
\int_{D^+_r}|\Psi_n|^4dx<C$$ for some fixed constant $C>0$, we have that $||u^+_n||_{L^\infty(\overline{D}^+_{\frac r4})}$ and
$||\Psi_n||_{L^\infty(\overline{D}^+_{\frac r8})}$ are uniformly
bounded.
\end{lm}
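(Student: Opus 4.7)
The plan is to adapt the Brezis--Merle small-energy regularity argument to the present mixed Neumann/Dirichlet boundary value problem on the half-disc with conical weight $|x|^\alpha$, and then close the argument by bootstrapping the Dirac equation.

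First I would decompose $u_n = v_n + w_n$, where $v_n$ solves the inhomogeneous mixed boundary value problem with Dirichlet zero on $S^+_r$, Neumann data $g_n := cV(x)|x|^\alpha e^{u_n}$ on $L_r$, and source $f_n := 2V^2(x)|x|^{2\alpha}e^{2u_n} - V(x)|x|^\alpha e^{u_n}|\Psi_n|^2$ in the bulk; then $w_n := u_n - v_n$ is harmonic in $D^+_r$ with vanishing Neumann trace on $L_r$. Extending $v_n$ by even reflection across $L_r$, the reflected function $\tilde{v}_n$ on the full disc $D_r$ satisfies a Dirichlet problem whose right-hand side is the even extension of $f_n$ plus the measure $2g_n$ concentrated along $L_r$; its total variation is $2\|f_n\|_{L^1(D^+_r)} + 2\|g_n\|_{L^1(L_r)}$. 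By Cauchy--Schwarz, the spinor contribution $\|V|x|^\alpha e^{u_n}|\Psi_n|^2\|_{L^1(D^+_r)}$ is bounded by $(\varepsilon_1/2)^{1/2}(\int |\Psi_n|^4)^{1/2} \leq C\sqrt{\varepsilon_1}$, so under $\varepsilon_1,\varepsilon_2 < \pi$ the total source mass on $D_r$ stays strictly below the critical Brezis--Merle threshold (after possibly restricting to a subdomain where the subcritical spinor correction is absorbed). The classical Brezis--Merle inequality on $D_r$ then yields $\int_{D_r}\exp(q|\tilde{v}_n|)\, dx \leq C$ for some exponent $q > 2$.

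For the harmonic part: the even reflection of $w_n$ is harmonic on $D_r$, so by the mean value property $\sup_{D^+_{r/2}} w_n$ is controlled by its mean on $D^+_r$, which in turn is bounded above using $\int e^{2u_n}|x|^{2\alpha} < C$ and Jensen's inequality. Combining with the exponential integrability for $v_n$, I obtain $e^{2u_n}\in L^{1+\delta}(D^+_{r/2})$ for some $\delta > 0$; in particular $f_n \in L^{1+\delta}(D^+_{r/2})$ and $g_n \in L^{1+\delta}(L_{r/2})$. Standard $W^{2,p}$-regularity for mixed Neumann/Dirichlet problems then gives $u_n \in L^\infty(D^+_{r/4})$, which is the desired uniform upper bound on $u_n^+$.

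Finally, with $u_n \leq C$ on $D^+_{r/4}$, the Dirac equation reduces to $|\slashiii{D}\Psi_n| \leq C|\Psi_n|$ with the chirality boundary condition $B\Psi_n = 0$ on $L_{r/4}$. Since the chirality condition is a local elliptic boundary condition that makes $\slashiii{D}$ self-adjoint (Section \ref{preli}), $L^p$ theory for first-order elliptic systems with boundary conditions, starting from the $L^4$ bound on $\Psi_n$ and iterating through $W^{1,p}$-regularity and Sobolev embedding, yields $\Psi_n \in L^\infty(D^+_{r/8})$. The main obstacle is keeping the constants sharp in the boundary Brezis--Merle step: the even reflection doubles both the bulk source mass on $D^+_r$ and the concentration along $L_r$, so matching the hypothesis $\varepsilon_1,\varepsilon_2 < \pi$ requires absorbing the subcritical contribution from the spinor term, which can be handled either by passing to a slightly smaller subdomain or by one round of bootstrap after Brezis--Merle has been applied once to obtain improved integrability of $e^{u_n}$.
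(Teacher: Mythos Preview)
Your outline is essentially the standard Brezis--Merle argument that the paper invokes from \cite{JWZZ2}, and the overall structure is correct. However, you skate past the one technical point the paper actually singles out for this lemma: controlling the mean of the harmonic part $w_n$ when $\alpha>0$. You write that $\sup w_n$ is bounded by its mean, ``which in turn is bounded above using $\int e^{2u_n}|x|^{2\alpha}<C$ and Jensen's inequality.'' But Jensen with Lebesgue measure would require a bound on $\int_{D^+_r} e^{2u_n}$, and the hypothesis only bounds the \emph{weighted} integral $\int_{D^+_r}|x|^{2\alpha}e^{2u_n}$; since the weight $|x|^{2\alpha}$ vanishes at the origin, this does not directly control $\int e^{2u_n}$ or $\int u_n^+$.

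The paper's fix is a short H\"older trick (originating in \cite{BT}): choose $t>0$ with $2t\alpha<2$ so that $\int_{D^+_r}|x|^{-2t\alpha}\,dx<\infty$, set $s=\frac{t}{t+1}\in(0,1)$, and estimate
\[
2s\int_{D^+_r} u_n^+\,dx \;\leq\; \int_{D^+_r} e^{2su_n}\,dx \;\leq\; \Big(\int_{D^+_r}|x|^{2\alpha}e^{2u_n}\Big)^{\!s}\Big(\int_{D^+_r}|x|^{-2t\alpha}\Big)^{\!1-s}\;<\;C.
\]
Once $\|u_n^+\|_{L^1}$ is bounded, the rest of your argument (even reflection, Brezis--Merle for $v_n$, mean value for $w_n$, $L^{1+\delta}$ bootstrap, and Dirac regularity under the chirality boundary condition) goes through exactly as you describe and coincides with the route taken in \cite{JWZZ2}.
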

\begin{proof} As the same situation as in Proposition \ref{prop-b}, we can no longer
use the inequality $2\int {u^+_n}<\int {e^{2u_n}}$  to get  the uniform bound of the $L^1$-integral of $u^+_n$ when  $\alpha>0$. But notice that there exists a constant $t>0$ such that
$$
\int_{D^+_r}\frac{1}{|x|^{2t\alpha}}dx\leq C.
$$
Setting $s=\frac t{t+1}\in (0,1)$,  then we  obtain
$$
2s\int_{D^+_r}u^+_ndx\leq \int_{D^+_r}e^{2su_n}dx\leq (\int_{D^+_r}|x|^{2\alpha}e^{2u_n}dx)^s
(\int_{D^+_r}|x|^{-2t\alpha}dx)^{1-s}<C.
$$
Then by a similar argument as in the proof of Lemma 3.5 in \cite{JWZZ2} we can prove this Lemma.
\end{proof}

\

When the energy $\int_{D^+_r}2V^2(x)|x|^{2\alpha}e^{2u_n}+\int_{L_r}V(x)|x|^{\alpha}e^{u_n}ds $ is large, in general, blow-up phenomenon may occur, i.e., Theorem \ref{mainthm} holds.

\
\

\begin{rem}\label{rem3.4}Let $v_n=u_n+\alpha\log|x|$, then $(v_n, \Psi_n)$ satisfies
 \begin{equation*}
\left\{
\begin{array}{rcll}
-\Delta v_n(x) &=& 2V^2(x)e^{2v_n(x)}-V(x)e^{v_n(x)}|\Psi_n|^2,     \quad  &\text {in } D^+_r, \\
\slashiii{D}\Psi_n &=& -V(x)e^{v_n(x)}\Psi_n, \quad &\text { in } D^+_r,\\
\frac {\partial v_n}{\partial n}& = & cV(x)e^{v_n(x)}+ \pi \alpha \delta_{p=0},\quad & \text { on } L_r, \\
B\Psi_n &=& 0,\quad & \text { on } L_r,
\end{array}
\right.
\end{equation*}
with the energy condition
\begin{equation*}
\int_{D^+_r}e^{2v_n}+|\Psi_n|^4dx+\int_{L_r}e^{v_n}ds<C.
\end{equation*}
Then, by using similar arguments as in \cite{BT}, the two blow-up sets of $u_n$ and $v_n$ are the same.
To show this conclution, it is sufficient to show the point $x=0$
is a blow-up point for $u_n$ if and only if it is a blow-up point for $v_n$. In fact, if $0$ is the only blow-up point for $v_n$
in a small neighbourhood $D^+_{\delta_0}\cup L_{\delta_0}$, that is, for any $\delta\in (0,\delta_0)$, $\exists C_\delta>0$, such that
\begin{equation}\label{tt}\max_{\overline{D^+_{\delta_0}\setminus D^+_\delta}}v_n\leq C_\delta,\quad \text { and }
 \max_{\overline{D^+_{\delta_0}}}v_n\rightarrow +\infty,\end{equation}
then, it is easy to see that $0$ is also the only blow-up point for $u_n$
in a small neighbourhood $D^+_{\delta_0}\cup L_{\delta_0}$, that is, for any $\delta\in (0,\delta_0)$, $\exists C_\delta>0$, such that
\begin{equation}\label{tt1}\max_{\overline{D^+_{\delta_0}\setminus D^+_\delta}}u_n\leq C_\delta,\quad \text { and }
 \max_{\overline{D^+_{\delta_0}}}u_n\rightarrow +\infty.\end{equation}
In converse, we assume that  $0$ is the only blow-up point for $u_n$ in a small neighbourhood $D^+_{\delta_0}\cup L_{\delta_0}$
such that (\ref{tt1}) is holds. We argue by contradiction and suppose that there exists a uniform constant $C$, such that $v_n(x)\leq C$
for any $x\in \overline{D}^+_{\delta_0}$. First, we can obtain that there exists a uniform constant $C$,
such that $|\Psi_n|^2(x)\leq C$ for any $x\in \overline{D}^+_{\frac{\delta_0}2}$. For this purpose, we
extend $(v_n,\Psi_n)$ to the lower half disk $D^-_{r}$. Assume $\bar{x}
$ is the reflection point of $x$ about $\partial \R^2_+$, and define
\begin{eqnarray*}
v_n(\bar{x}) :=& v_n(x),  \quad \bar{x} \in D^{-}_{r},\\
\Psi_n(\bar{x})  :=& ie_1\cdot \Psi_n(x),  \quad \bar{x}\in D^{-}_{r},\\
A_n(x):=& \left \{\begin{matrix} e^{v_n(x)},\quad x\in D^{+}_{r},\\
e^{v_n(\bar{x})}, \quad x\in D^-_{r}.\end{matrix} \right.
\end{eqnarray*}
Then $\Psi_n$ satisfies
$$
\slashiii{D}\Psi_n=-A_n(x)\Psi_n,\quad \text { in } D_r. $$
Since $A_n(x)$ is uniformly bounded in $ L^{\infty}(D_{\delta_0})$ and $\int_{D_{\delta_0}}|\Psi_n|^4dx<C$, we
have $\Psi_n$ is uniformly bounded in $ W^{1,\frac 43}(\Gamma(\Sigma D_{\frac {\delta_0}2}))$ and in
particular $\Psi_n$ is uniformly bounded $ C^\gamma(\Gamma( \Sigma \overline{D}^+_{\frac
{\delta_0}2}))$ for some $0<\gamma<1$. Further, since
$$
f_n(x):=2V_n^2(x)|x|^{2\alpha}e^{2u_n(x)}-V_n(x)|x|^{\alpha}e^{u_n(x)}|\Psi_n|^2 =2V_n^2(x)e^{2v_n(x)}-V_n(x)e^{v_n(x)}|\Psi_n|^2
$$
and
$$
g_n:=-V_n(x)|x|^{\alpha}e^{u_n(x)}\Psi_n =- V_n(x)e^{v_n(x)}\Psi_n
$$
are uniformly bounded in  $\overline{D}^+_{\frac{\delta_0}2}$. Then
by Harnack type inequality of Neumann boundary problem  (see Lemma A.2 in \cite{JWZZ2}),
it follows that $\inf_{\overline{D}^+_{\frac{\delta_0}2}}u_n\rightarrow +\infty$. Thus we get a contradiction
since the blow-up set of $u_n$ is finite.
\end{rem}

\section{Removability of Local Sigularities}

The Pohozaev indenty is closely related to the removability of singularities.  In this section, we shall first establish the Pohozaev identiy for regular solutions to (\ref{Eq-B}). Then for solutions defined on a domain with isolated singularity, we define a constant which is called the Pohozaev constant. The most important is that a necessary and sufficient condition for the removability of local singularities is the vanishing of Pohazaev constant.

\begin{prop}\label{prop-poho}(Pohozaev indenty)
Let $(u,\Psi)$ be a regular solution of (\ref{Eq-B}), that is $(u,\Psi)$ satisfies
\begin{equation*}
\left\{
\begin{array}{rcll}
-\Delta u(x) &=& 2V^2(x)|x|^{2\alpha}e^{2u(x)}-V(x)|x|^{\alpha}e^{u(x)}|\Psi|^2,  \quad &\text { in } D^+_{R}, \\
\slashiii{D}\Psi &=& -V(x)|x|^{\alpha}e^{u(x)}\Psi, \quad &\text { in } D^+_{R}, \\
\frac {\partial u}{\partial n}& = & cV(x)|x|^{\alpha}e^{u(x)},\quad & \text { on } L_R, \\
B\Psi &=& 0,\quad & \text { on } L_R.
\end{array}
\right.
\end{equation*}
Then we have the following Pohozaev identity
\begin{eqnarray}\label{poho-1}
&&R\int_{S^+_R} |\frac {\partial u}{\partial
\nu}|^2-\frac
12|\nabla u|^2d\sigma \nonumber\\
&=& (1+\alpha)\{\int_{D^+_R}(2V^2(x)|x|^{2\alpha}e^{2u}-V(x)|x|^{\alpha}e^u|\Psi|^2)dv+
\int_{L_R}cV(x)|x|^{\alpha}e^{u}ds \} \nonumber\\
& & -R\int_{S^+_R}V^2(x)|x|^{2\alpha}e^{2u}d\sigma +\int_{L_R}c\frac {\partial V(s,0)}{\partial s}|s|^{\alpha}se^{u(s,0)}ds-cV(s,0)|s|^{\alpha}se^{u(s,0)}|^{s=R}_{s=-R} \nonumber\\
& &+\int_{D^+_R}x\cdot \nabla (V^2(x))|x|^{2\alpha}e^{2u}dv -\int_{D^+_R}x\cdot \nabla
V(x)|x|^{\alpha}e^u|\psi|^2dv \nonumber\\
& & + \frac 14\int_{S^+_R}\la\frac {\partial
\Psi}{\partial \nu},(x+\bar{x})\cdot\Psi\ra d\sigma+\frac
14\int_{S^+_R}\la (x+\bar{x})\cdot\Psi, \frac
{\partial \Psi}{\partial \nu}\ra d\sigma,
\end{eqnarray}
where $\nu$ is the outward normal vector to $S^+_R$, and $\bar{x}$ is the reflection point of $x$ about $\partial \R^2_+$.
\end{prop}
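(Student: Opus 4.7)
The plan is to apply the classical Pohozaev multiplier $x\cdot\nabla u$ to the scalar equation and convert the resulting spinor bulk residual into an $S_R^+$ boundary integral via Green's identity for $\slashiii{D}$, using a test spinor that respects the chirality boundary condition on $L_R$. First I multiply the first equation by $x\cdot\nabla u$ and integrate over $D_R^+$. For the Laplacian side, the two-dimensional identity
\[
-\Delta u\,(x\cdot\nabla u)=-\mathrm{div}\bigl((x\cdot\nabla u)\nabla u\bigr)+\tfrac12\mathrm{div}\bigl(x|\nabla u|^2\bigr)
\]
together with the divergence theorem on $\partial D_R^+=S_R^+\cup L_R$ produces the term $-R\int_{S_R^+}(|\partial_\nu u|^2-\tfrac12|\nabla u|^2)d\sigma$ on $S_R^+$ (since $\nu=x/R$, $x\cdot\nu=R$), while on $L_R$ only $-\int_{L_R}(x\cdot\nabla u)\partial_\nu u\,ds$ survives because $x\cdot\nu=0$. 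Substituting $\partial_\nu u=cV|x|^\alpha e^u$ and integrating by parts in the arclength $s\in[-R,R]$, using $\partial_s(|s|^\alpha s)=(1+\alpha)|s|^\alpha$, produces the endpoint term $-cV(s,0)|s|^\alpha s\,e^{u(s,0)}\big|_{-R}^R$, the $\partial_sV$ contribution, and the factor $(1+\alpha)\int_{L_R}cV|x|^\alpha e^u$ appearing in the statement.

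For the right-hand side I rewrite $2V^2|x|^{2\alpha}e^{2u}(x\cdot\nabla u)=V^2|x|^{2\alpha}\,x\cdot\nabla(e^{2u})$ and apply the divergence theorem, invoking $x\cdot\nabla|x|^{2\alpha}=2\alpha|x|^{2\alpha}$, to extract the $S_R^+$ boundary term $R\int_{S_R^+}V^2|x|^{2\alpha}e^{2u}d\sigma$, the bulk coefficient $2(1+\alpha)\int V^2|x|^{2\alpha}e^{2u}$, and the $\int x\cdot\nabla(V^2)|x|^{2\alpha}e^{2u}$ correction. An analogous manipulation of $-V|x|^\alpha e^u|\Psi|^2(x\cdot\nabla u)$ produces an $S_R^+$ boundary piece that cancels against the one from the scalar side, the bulk coefficient $(2+\alpha)\int V|x|^\alpha e^u|\Psi|^2$, the $-\int x\cdot\nabla V\,|x|^\alpha e^u|\Psi|^2$ correction, and a residual bulk integral $\int_{D_R^+}V|x|^\alpha e^u\,(x\cdot\nabla|\Psi|^2)dx$ that cannot be reduced by divergence alone.

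To handle this residual, use $V|x|^\alpha e^u\Psi=-\slashiii{D}\Psi$ to write it as $-2\operatorname{Re}\int\la\nabla_x\Psi,\slashiii{D}\Psi\ra dx$, then invoke the Clifford identity $\slashiii{D}(x\cdot\Psi)=-2\Psi-x\cdot\slashiii{D}\Psi-2\nabla_x\Psi$ (which follows from $e_k\cdot e_k=-1$ and $e_k\cdot x=-x\cdot e_k-2x_k$ in dimension two) together with Green's identity for $\slashiii{D}$. The critical step is to symmetrize the test spinor as $(x+\bar x)\cdot\Psi$, where $\bar x$ is the reflection of $x$ across $L_R$. On $L_R$ one has $(x+\bar x)\cdot\Psi=2s\,e_1\cdot\Psi$, and a direct check with $G=ie_1e_2$ shows that $B^\pm(e_1\cdot\Psi)=0$ whenever $B^\pm\Psi=0$; hence the $L_R$ piece of Green's identity vanishes, leaving only the surviving $S_R^+$ contribution $\tfrac14\int_{S_R^+}\bigl(\la\partial_\nu\Psi,(x+\bar x)\cdot\Psi\ra+\la(x+\bar x)\cdot\Psi,\partial_\nu\Psi\ra\bigr)d\sigma$. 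The $-2\Psi$ piece in the Clifford identity produces an extra $-\int V|x|^\alpha e^u|\Psi|^2$ that reduces the bulk coefficient $(2+\alpha)$ to $(1+\alpha)$, matching the statement, while the $\la x\cdot\slashiii{D}\Psi,\slashiii{D}\Psi\ra$-type terms contribute nothing to the real part since Clifford multiplication by a real vector is skew-Hermitian.

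The main obstacle is the spinor bookkeeping in this last step: one must verify carefully that the symmetrization via $(x+\bar x)$ is the unique natural choice that simultaneously kills the $L_R$ boundary integral in Green's identity via the chirality condition and reproduces exactly the Clifford commutation terms so that the coefficient absorption $(2+\alpha)\rightarrow(1+\alpha)$ proceeds cleanly. The reflection $\bar x$ is dictated by the boundary problem: in the interior case the vanilla multiplier $x$ would already suffice (as in Proposition 2.1 of \cite{JWZZ1}), but the chirality-boundary presence of $L_R$ forces the symmetrization to retain Hermiticity and preserve $B^\pm$-compatibility of the test spinor.
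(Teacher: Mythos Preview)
Your treatment of the scalar multiplier $x\cdot\nabla u$ is correct and matches the paper: the Laplacian boundary terms on $S_R^+$ and $L_R$, the $(1+\alpha)$ factors from $x\cdot\nabla|x|^{2\alpha}=2\alpha|x|^{2\alpha}$ and $\partial_s(|s|^\alpha s)=(1+\alpha)|s|^\alpha$, and the $\nabla V$ corrections all come out as you describe, and they reproduce the paper's intermediate identity \eqref{4.2}.

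The gap is in your handling of the spinor residual. Your plan is to write the leftover bulk term as $\pm 2\,\mathrm{Re}\int\la\nabla_x\Psi,\slashiii{D}\Psi\ra$, substitute the Clifford relation $\slashiii{D}(x\cdot\Psi)=-2\Psi-x\cdot\slashiii{D}\Psi-2\nabla_x\Psi$, and then apply the first--order Green identity for $\slashiii{D}$. If you actually carry this out you will find that it is circular: Green's formula converts $\mathrm{Re}\int\la\slashiii{D}(x\cdot\Psi),\slashiii{D}\Psi\ra$ into $\mathrm{Re}\int\la x\cdot\Psi,\slashiii{D}^2\Psi\ra$ plus a boundary piece; the equation $\slashiii{D}^2\Psi=-(\nabla A)\cdot\Psi+A^2\Psi$ (with $A=V|x|^\alpha e^u$) then yields $-\int(x\cdot\nabla A)|\Psi|^2$, and integrating this by parts returns exactly the residual you started from. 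All the lower--order pieces cancel and you are left with a tautological boundary identity $R\int_{S_R^+}A|\Psi|^2=\mathrm{Re}[\text{bdry}]$, not a formula for the residual. In particular the ``$-2\Psi$ piece'' does \emph{not} supply the single copy of $-\int V|x|^\alpha e^u|\Psi|^2$ needed to lower $(2+\alpha)$ to $(1+\alpha)$; its contribution is cancelled in the loop.

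The paper breaks this circularity by a genuinely different mechanism. It extends $\Psi$ to the full disk $D_R$ by the chirality reflection \eqref{reflec2}, passes to the second--order equation $-\Delta\Psi=-dA\cdot\Psi+A^2\Psi$ via Schr\"odinger--Lichnerowicz, and integrates $\la\Delta\Psi,x\cdot\Psi\ra$ by parts using the \emph{Laplacian} Green identity over $D_R$. The crucial point is that this produces the bulk term $\sum_\alpha\int\la\nabla_{e_\alpha}\Psi,x\cdot\nabla_{e_\alpha}\Psi\ra$, whose real part vanishes by skew--Hermiticity of Clifford multiplication; this term absorbs a derivative without regenerating the residual, and is precisely what the first--order Dirac Green identity fails to supply. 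The $(x+\bar x)$ combination on $S_R^+$ then arises automatically when the full--circle boundary integral is folded back to the upper half by the reflection, and the extra $+\int V|x|^\alpha e^u|\Psi|^2$ that effects the coefficient drop comes from $-\int\la\slashiii{D}\Psi,\Psi\ra$ in the expansion of $\la\nabla\Psi,\nabla(x\cdot\Psi)\ra$, giving \eqref{4.5}. Your symmetrization idea is morally right---it is how the paper's reflection manifests itself on $S_R^+$---but the second--order structure, not a first--order Green step, is what makes the computation close.
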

\begin{proof} The case of  $\alpha=0$ and $V\equiv1$ has already been treated in \cite{JZZ1}. The calculation of the Pohozaev identity is standard. Since in the sequel we will need to calculate the Pohozaev identity for different equations, for reader's convenience, we give the detailed proof for this general case.

First, we multiply the first equation by $x\cdot \nabla u$ and integrate over $D^+_R$ to obtain
\[-\int_{D^+_R}\triangle u x\cdot \nabla
udv=\int_{D^+_R}2V^2(x)|x|^{2\alpha}e^{2u}x\cdot \nabla udv-\int_{D^+_R}V(x)|x|^\alpha e^u|\Psi|^2
x\cdot \nabla udv .\]

It follows from direct computations that
\begin{eqnarray*}
& & \int_{D^+_R}\triangle u x\cdot \nabla udv\\
&=& R\int_{S^+_R} |\frac {\partial u}{\partial
\nu}|^2-\frac 12|\nabla u|^2d\sigma+\int_{L_R}\frac{\partial u}{\partial n}(x\cdot \nabla u)ds\\
&=& R\int_{S^+_R} |\frac {\partial u}{\partial
\nu}|^2-\frac 12|\nabla u|^2d\sigma+\int_{L_R}cV(x)|x|^{\alpha}e^u(x\cdot \nabla u)ds\\
&=& R\int_{S^+_R} |\frac {\partial u}{\partial
\nu}|^2-\frac 12|\nabla u|^2d\sigma-(\alpha+1)\int_{L_R}cV(x)|x|^{\alpha}e^{u}ds\\
& & -\int_{L_R}c\frac {\partial V(s,0)}{\partial s}|s|^{\alpha}se^{u(s,0)}ds+cV(s,0)|s|^{\alpha}se^{u(s,0)}|^{s=R}_{s=-R},
\end{eqnarray*}

\begin{eqnarray*}
& & \int_{D^+_R}2V^{2}(x)|x|^{2\alpha}e^{2u}x\cdot \nabla udv\\
& = & R\int_{S^+_R}V^2(x)|x|^{2\alpha}e^{2u}d\sigma-(2+2\alpha)\int_{D^+_R}V^2(x)|x|^{2\alpha}e^{2u}dv
-\int_{D^+_R}x\cdot \nabla (V^2(x))|x|^{2\alpha}e^{2u}dv,
\end{eqnarray*}

and
\begin{eqnarray*}
& & \int_{D^+_R}V(x)|x|^{\alpha}e^u|\Psi|^2
x\cdot \nabla udv\\
& = & R\int_{S^+_R}V(x)|x|^{\alpha}e^u|\Psi|^2d\sigma-\int_{D^+_R}|x|^{\alpha}e^ux\cdot \nabla
(V(x)|\Psi|^2)dv-(2+\alpha)\int_{D^+_R}V(x)|x|^{\alpha}e^u|\Psi|^2dv.
\end{eqnarray*}

Therefore we have
\begin{eqnarray}\label{4.2}
&&R\int_{S^+_R} |\frac {\partial u}{\partial
\nu}|^2-\frac
12|\nabla u|^2d\sigma \nonumber\\
&=& (1+\alpha)\int_{D^+_R}2V^2(x)|x|^{2\alpha}e^{2u}dv-(2+\alpha)\int_{D^+_R}V(x)|x|^{\alpha}e^u|\Psi|^2dv+
(\alpha+1)\int_{L_R}cV(x)|x|^{\alpha}e^{u}ds \nonumber\\
& & -R\int_{S^+_R}V^2(x)|x|^{2\alpha}e^{2u}d\sigma+R\int_{S^+_R}V(x)|x|^{\alpha}e^u|\Psi|^2d\sigma \nonumber\\
& &  +\int_{L_R}c\frac {\partial V(s,0)}{\partial s}|s|^{\alpha}se^{u(s,0)}ds-cV(s,0)|s|^{\alpha}se^{u(s,0)}|^{s=R}_{s=-R} \nonumber\\
& &+\int_{D^+_R}x\cdot \nabla (V^2(x))|x|^{2\alpha}e^{2u}dv -\int_{D^+_R}|x|^{\alpha}e^ux\cdot \nabla
(V(x)|\Psi|^2)dv
\end{eqnarray}

\
\

On the other hand, for $x\in \R^2_+$, we denote $x=x_1e_1+x_2e_2$ under the local orthonormal basis $\{e_1,e_2\}$ on $\R^2_+$.  Using the Clifford multiplication relation
\[
e_i\cdot e_j+e_j\cdot e_i=-2\delta _{ij},\text{ for }1\leq i,j\leq 2
\] and
\[
\left\langle \psi ,\varphi \right\rangle =\left\langle e_i\cdot \psi
,e_i\cdot \varphi \right\rangle
\]
for any spinors $\psi ,\varphi \in \Gamma (\Sigma M)$. We know that
\begin{equation}\label{4.3}
\la\psi,e_i\cdot\psi\ra +\la e_i\cdot\psi,\psi\ra=0
\end{equation}
for any $i=1,2$. Using the chirality boundary condition of $\Psi$, we
extend $(u,\Psi)$ to the lower half disk $D^-_{R}$. Assume
$\bar{x} $ is the reflection point of $x$ about $\partial \R^2_+$,
and define
\begin{equation}
\label{reflec1} u(\bar{x}) := u(x), \quad \bar{x} \in D^{-}_{R},
\end{equation}
\begin{equation}
\label{reflec2} \Psi(\bar{x})  := ie_1 \cdot \Psi(x),   \quad \bar{x}\in D^{-}_{R}.
\end{equation}
Then it follows from the argument in Lemma 3.4 of \cite{JWZZ2} that we
obtain
$$
\slashiii{D}\psi=-A(x)\psi \quad \text { in } D_R. $$ Here
\begin{equation*}
A(x)=\left \{\begin{matrix} V(x)|x|^{\alpha}e^{u(x)},\quad x\in D^{+}_{R},\\
V(\bar{x})|\bar{x}|^{\alpha}e^{u(\bar{x})}, \quad x\in D^-_{R}.\end{matrix} \right.
\end{equation*}
Using the Schr\"{o}dinger-Lichnerowicz formula
$\slashiii{D}^2=-\triangle +\frac 12 K_g$, we have
\begin{equation}\label{4.4}
-\triangle \Psi=-dA(x)\cdot\psi+A^2(x)\Psi \quad \text{ in } D_{R}.
\end{equation}
Then we multiply (\ref{4.4}) by $x\cdot\Psi$ (where $\cdot$ denotes
the Clifford multiplication) and integrate over $D_R$ to obtain
\[\int_{D_R}\la\triangle\Psi,x\cdot\Psi\ra dv=\int_{B_R}\la dA(x)\cdot\Psi,x\cdot\Psi\ra
dv-\int_{D_R}A^2(x)\la\Psi,x\cdot \Psi\ra dv,\]

and
\[\int_{D_R}\la x\cdot \Psi,\triangle\Psi\ra dv=\int_{D_R}
\la x\cdot \Psi,dA(x)\cdot\Psi\ra dv-\int_{D_R}A^2(x)\la x\cdot
\Psi, \Psi\ra dv .\]

\noindent On the other hand, by partial integration,
\begin{eqnarray*}
& & \int_{D_R}\la\triangle\Psi,x\cdot\Psi\ra dv \\
&=&\int_{D_R}div\la\nabla \Psi,x\cdot\Psi\ra
dv-\int_{D_R}\sum^2_{\alpha=1}\la
\nabla_{e_\alpha}\Psi,e_\alpha\cdot\Psi\ra dv-\int_{D_R}\la\nabla
\Psi,x\cdot\nabla \Psi\ra\\
&=&\int_{\partial D_R}\la\frac {\partial \Psi}{\partial
\nu},x\cdot\Psi\ra d\sigma+\int_{D_R}\la\slashiii{D}\Psi,\Psi\ra
dv-\int_{D_R}\la\nabla
\Psi,x\cdot\nabla \Psi\ra\\
&=& \int_{\partial D_R}\la\frac {\partial \Psi}{\partial
\nu},x\cdot\Psi\ra d\sigma-\int_{D_R} A(x)|\Psi|^2
dv-\int_{D_R}\la\nabla \Psi,x\cdot\nabla \Psi\ra,\\
&=&  \int_{ \partial D^+_R\cap \R^2_+}\la\frac {\partial
\Psi}{\partial \nu},(x+\bar{x})\cdot\Psi\ra d\sigma-2\int_{D^+_R}
V(x)|x|^{\alpha}e^u|\Psi|^2 dv-\int_{D_R}\la\nabla \Psi,x\cdot\nabla \Psi\ra,
\end{eqnarray*}
and similarly
\[
\int_{D_R}\la x\cdot\Psi,\triangle \Psi\ra=\int_{\partial D^+_R\cap
\R^2_+}\la (x+\bar{x})\cdot \Psi, \frac {\partial \Psi}{\partial \nu}\ra
d\sigma-2\int_{D^+_R}V(x)|x|^{\alpha} e^u|\Psi|^2 dv-\int_{D_R}\la x\cdot\nabla
\Psi,\nabla \Psi\ra.
\]

\noindent Furthermore we also have
\begin{eqnarray*}
&& \int_{D_R}\la dA(x)\cdot \Psi,x\cdot \Psi\ra dv +\int_{D_R}\la
x\cdot\Psi, dA(x)\cdot \Psi\ra dv\\
&=&\int_{D_R}\sum^2_{\alpha,\beta=1}\la\nabla_{e_\alpha}A(x)e_\alpha\cdot\Psi,e_\beta\cdot\Psi\ra
 x_\beta dv+\int_{D_R}
\sum^2_{\alpha,\beta=1}\la e_{\beta}\cdot
\Psi,\nabla_{e_\alpha}A(x)e_\alpha\cdot\Psi\ra
 x_\beta dv\\
 &=& 2\int_{D_R}\sum^2_{\alpha=1}\la\nabla_{e_\alpha}A(x)e_\alpha\cdot\Psi,e_\alpha\cdot\Psi\ra
 x_\alpha dv\\
 &=& 2\int_{D_R}x\cdot\nabla (A(x))|\Psi|^2dv\\
 &=&-2\int_{D_R}A(x)x\cdot\nabla(|\Psi|^2)dv-4\int_{D_R}A(x)|\Psi|^2dv+2R\int_{\partial
 D_R}A(x)|\Psi|^2dv\\
 &=& -4\int_{D^+_R}V(x)|x|^{\alpha}e^u x\cdot\nabla(|\Psi|^2)dv-8\int_{D^+_R}V(x)|x|^{\alpha}e^u |\Psi|^2dv+4R\int_{\partial
 D^+_R\cap \R^2_+} V(x)|x|^{\alpha}e^u|\Psi|^2dv.
\end{eqnarray*}
Therefore we obtain
\begin{eqnarray}\label{4.5}
& & R\int_{\partial
D^+_R\cap \R^2_+}V(x)|x|^{\alpha}e^u|\Psi|^2d\sigma-\int_{D^+_R}V(x)|x|^{\alpha}e^ux\cdot\nabla(|\Psi|^2)dv\nonumber \\
&= &\frac 14\int_{\partial D^+_R\cap \R^2_+}\la\frac {\partial
\Psi}{\partial \nu},(x+\bar{x})\cdot\Psi\ra d\sigma+\frac
14\int_{\partial D^+_R\cap \R^2_+}\la (x+\bar{x})\cdot\Psi, \frac
{\partial \Psi}{\partial \nu}\ra d\sigma\nonumber \\
& & +\int_{D^+_R} V(x)|x|^{\alpha}e^u|\Psi|^2 dv.
\end{eqnarray}
Putting (\ref{4.2}) and (\ref{4.5}) together, we obtain our Pohozaev type
identity (\ref{poho-1}).
\end{proof}

Pohozaev type identity is shown to be closely related to the removablity of local singularities of solutions. For a solution of \eqref{Eq-L} and \eqref{Co-L}, we defined in \cite{JZZ3} the following Pohozaev constant:
\begin{df}[\cite{JZZ3}] \label{poho-const} Let $(u,\Psi)\in C^2(D_r\backslash\{0\})\times C^2(\Gamma(\Sigma (D_r\backslash\{0\})))$ be a solution of \eqref{Eq-L} and \eqref{Co-L}. For $0<R<r$, we define the {\em Pohozaev constant} with respect to the equations \eqref{Eq-L} with the constraint \eqref{Co-L} as follows:
 \begin{eqnarray*}
C(u,\Psi)&:=& R\int_{\partial D_R(0)} |\frac {\partial u}{\partial \nu}|^2-\frac 12|\nabla u|^2d\sigma \\
&-&(1+\alpha)\int_{D_R(0)}(2V^2(x)|x|^{2\alpha}e^{2u}-V(x)|x|^{\alpha}e^u|\Psi|^2)dx \\
& & +R\int_{\partial D_R(0)}V^2(x)|x|^{2\alpha}e^{2u}d\sigma-\frac 12\int_{\partial
D_R(0)}\la\frac {\partial \Psi}{\partial \nu}, x\cdot\Psi\ra +\la
x\cdot\Psi, \frac {\partial \Psi}{\partial \nu}\ra d\sigma\\
& & -\int_{D_R(0)}(|x|^{2\alpha}e^{2u}x\cdot\nabla (V^2(x))-|x|^{\alpha}e^u|\Psi|^2x\cdot \nabla
V(x))dx
\end{eqnarray*}
where $\nu$ is the outward normal vector of $\partial D_R(0)$.
\end{df}

It is clear that $C(u,\Psi)$ is independent of $R$ for $0<R<r$. Thus, the vanishing of the Pohozaev constant $C(u,\Psi)$ is equivalent to the {\em Pohozaev identity}
\begin{eqnarray}\label{poho}
&&R\int_{\partial D_R(0)} |\frac {\partial u}{\partial \nu}|^2-\frac 12|\nabla u|^2d\sigma \nn\\
&=&(1+\alpha)\int_{D_R(0)}(2V^2(x)|x|^{2\alpha}e^{2u}-V(x)|x|^{\alpha}e^u|\Psi|^2)dx \nn\\
& & -R\int_{\partial D_R(0)}V^2(x)|x|^{2\alpha}e^{2u}d\sigma+\frac 12\int_{\partial
D_R(0)}(\la\frac {\partial \Psi}{\partial \nu}, x\cdot\Psi\ra +\la
x\cdot\Psi, \frac {\partial \Psi}{\partial \nu}\ra ) d\sigma   \nn\\
& & +\int_{D_R(0)}(|x|^{2\alpha}e^{2u}x\cdot\nabla (V^2(x))-|x|^{\alpha}e^u|\Psi|^2x\cdot \nabla
V(x))dx
\end{eqnarray}
for  a solution $(u,\Psi)\in C^2(D_r(0))\times C^2(\Gamma(\Sigma D_r(0)))$  of (\ref{Eq-L}) and (\ref{Co-L}).

We also proved in \cite{JZZ3} that a local singularity is removable iff the Pohozaev identity \eqref{poho} holds, that is, iff the Pohozaev constant vanishes.
\begin{thm}[\cite{JZZ3}]\label{thm-sigu-move1}
Let $(u,\Psi)\in C^2(D_r\setminus\{0\})\times C^2(\Gamma(\Sigma (D_r\setminus \{0\})))$  be  a solution of (\ref{Eq-L}) and (\ref{Co-L}).
Then there is a constant $\gamma < 2\pi (1+\a)$ such that
\begin{eqnarray*}
u(x)=- \frac{\gamma}{2\pi}{\rm log} |x| + h, \quad {\rm near }\ 0,
\end{eqnarray*}
where $h$ is bounded near $0$. The Pohozaev constant  $C(u, \Psi) $ and $\gamma$ satisfy:
\begin{eqnarray*}
 C(u, \Psi)=\frac{\gamma^2}{4\pi}.
\end{eqnarray*}
In particular, $(u,\Psi)\in C^2(D_r)\times C^2(\Gamma(\Sigma D_r))$, i.e. the local singularity of $(u,\Psi)$ is removable,   iff $C(u,\Psi)=0$.
\end{thm}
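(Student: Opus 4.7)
The plan is to split the argument into three stages: first extract the leading logarithmic singularity of $u$, then bootstrap the regularity of $\Psi$, and finally substitute the asymptotic expansion into the Pohozaev identity to compute $C(u,\Psi)$ explicitly. Set $f:=2V^{2}|x|^{2\alpha}e^{2u}-V|x|^{\alpha}e^{u}|\Psi|^{2}$, so that $-\Delta u=f$ pointwise on $D_r\setminus\{0\}$. By Cauchy--Schwarz applied to the second term, together with the energy hypothesis \eqref{Co-L} and $\Psi\in L^{4}$, we obtain $f\in L^{1}(D_r)$. Viewed distributionally on the full disk, $-\Delta u = f+\gamma\,\delta_{0}$ for a unique constant $\gamma$ that encodes the potential logarithmic behaviour of $u$ at the origin. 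Taking spherical averages $\bar{u}(\rho):=\frac{1}{2\pi}\int_{\partial D_\rho}u$ and applying the divergence theorem gives $2\pi\rho\,\bar{u}'(\rho)=-\gamma+\int_{D_\rho}f\,dx$; integrating in $\rho$ together with a Fubini argument (which makes $\int_{0}^{\rho_0}s^{-1}\int_{D_s}|f|\,ds$ finite because $s|\log s|$ is bounded) yields $\bar{u}(\rho)=-\frac{\gamma}{2\pi}\log\rho+O(1)$ as $\rho\to 0$. Standard Harnack/elliptic estimates applied to $h:=u+\frac{\gamma}{2\pi}\log|x|$, which satisfies $-\Delta h=f$ distributionally on all of $D_r$, then upgrade boundedness of the spherical mean to pointwise boundedness of $h$. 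Finally, $\int|x|^{2\alpha}e^{2u}<\infty$ forces $|x|^{2\alpha-\gamma/\pi}$ to be integrable near $0$, which gives the claimed bound $\gamma<2\pi(1+\alpha)$.

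With the expansion $u=-\frac{\gamma}{2\pi}\log|x|+h$ and $h$ bounded, the coefficient $V|x|^{\alpha}e^{u}$ behaves like $|x|^{\alpha-\gamma/(2\pi)}$, which lies in $L^{p}_{loc}$ for some $p>2$ thanks to $\gamma<2\pi(1+\alpha)$. Bootstrapping the Dirac equation $\slashiii{D}\Psi=-V|x|^{\alpha}e^{u}\Psi$ via the usual $L^{p}$ estimates and Sobolev embedding, exactly as in the proof of Proposition \ref{prop-a}, promotes $\Psi$ from $L^{4}$ to $L^{\infty}_{loc}$, and in fact to H\"older continuity, across the origin.

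For the Pohozaev computation, substitute $u=-\frac{\gamma}{2\pi}\log|x|+h$ into $C(u,\Psi)$ evaluated on $\partial D_R(0)$. The dominant contribution comes from $R\int_{\partial D_R}\bigl(|\partial_{\nu}u|^{2}-\tfrac{1}{2}|\nabla u|^{2}\bigr)d\sigma$: the purely logarithmic part gives $R\cdot 2\pi R\cdot\tfrac{1}{2}\bigl(\tfrac{\gamma}{2\pi R}\bigr)^{2}=\gamma^{2}/(4\pi)$, while the cross-terms involving $h$ decay at least like $R|\log R|$ and the pure $h$ term is $O(R^{2})$. The bulk integrals over $D_R$ are of order $R^{2(1+\alpha)-\gamma/\pi}\to 0$, the drift term $\int_{D_R}|x|^{2\alpha}e^{2u}x\cdot\nabla(V^{2})$ is controlled similarly, and the spinor boundary term vanishes because $\Psi$ is bounded and $|x|\le R$ on $\partial D_R$. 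Since $C(u,\Psi)$ is $R$-independent by Definition \ref{poho-const}, letting $R\to 0$ produces $C(u,\Psi)=\gamma^{2}/(4\pi)$. In particular $C(u,\Psi)=0$ iff $\gamma=0$ iff $u$ has no logarithmic singularity, and then $u$ (together with $\Psi$, by Stage~2) extends to a classical $C^{2}$ solution on $D_r$ by standard elliptic regularity.

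The main obstacle is making the spherical-average argument precise when $f$ is only in $L^{1}$: one has to guarantee that the $O(1)$ remainder in the logarithmic expansion is \emph{pointwise} bounded, not merely bounded in mean or in $L^{p}$, because the subsequent bootstrap of $\Psi$ and the decay estimates on the Pohozaev boundary terms all require pointwise control of $h$. A secondary subtlety is that the bound $\gamma<2\pi(1+\alpha)$ from the first stage must be in hand before the spinor regularity of the second stage can proceed, which in turn is needed to kill the $\langle x\cdot\Psi,\partial_{\nu}\Psi\rangle$-type terms when passing $R\to 0$ in the third stage; the three stages are thus genuinely sequential rather than independent.
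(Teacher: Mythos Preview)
Your overall strategy mirrors the paper's, but there is a genuine gap at the step where you claim that ``standard Harnack/elliptic estimates'' promote boundedness of the spherical mean of $h:=u+\frac{\gamma}{2\pi}\log|x|$ to pointwise boundedness. With only $f\in L^{1}$ this fails: solutions of $-\Delta h=f$ need not be bounded, and Harnack-type inequalities require better integrability of the right-hand side. (Your Fubini justification is also off: $\int_{0}^{\rho_{0}}s^{-1}\int_{D_{s}}|f|\,ds=\int_{D_{\rho_{0}}}|f(y)|\log(\rho_{0}/|y|)\,dy$, and this is not finite for a general $f\in L^{1}$; the phrase ``$s|\log s|$ is bounded'' does not address it.) In the proof of the boundary analogue, Theorem~\ref{thm-sigu-move1-B}, the paper proceeds more carefully: it writes the Newtonian potential $v$ of $f$, uses the Brezis--Merle inequality to get $e^{|v|}\in L^{p}$ for all $p$, and then exploits the specific structure of $f$ together with the \emph{strict} inequality $\gamma<2\pi(1+\alpha)$ to bootstrap $f$ into $L^{t}$ for some $t>1$; only then does elliptic regularity give $v\in L^{\infty}$. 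Crucially, the strict inequality is obtained \emph{before} $h$ is known to be bounded, via a separate spinor decay lemma (the interior analogue of Lemma~\ref{asy-phi}) and an argument from \cite{JWZZ1}. Your ordering---first get $h$ bounded, then read off strictness from integrability---inverts this dependency and leaves the central step unsupported.

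A second, related gap appears in the Pohozaev computation. You assert that the cross-terms involving $\nabla h$ decay like $R|\log R|$, but this requires quantitative control on $|\nabla h(x)|$ near the origin, not merely $h\in L^{\infty}$. In the paper's scheme this comes from an explicit estimate $|\nabla v(x)|\le C|x|^{-(1-\tau)}$ for some $\tau>0$, obtained by splitting the convolution into near and far regions once $f\in L^{t}$, $t>1$, is known. Likewise, to kill the spinor boundary term $\int_{\partial D_{R}}\langle\partial_{\nu}\Psi,x\cdot\Psi\rangle\,d\sigma$ one needs decay of $|\nabla\Psi|$ as well as of $|\Psi|$; the paper's spinor decay lemma gives $|\Psi|\le C|x|^{\delta_{0}-1/2}$ and $|\nabla\Psi|\le C|x|^{\delta_{0}-3/2}$ for some $\delta_{0}>0$, which is exactly enough, whereas your H\"older continuity of $\Psi$ says nothing about $\nabla\Psi$.
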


For the singular boundary problem \eqref{Eq-B}, we can define the Pohozaev constant in a similar way:

\begin{df}

Let $(u,\Psi) \in C^{2}(D^+_r)\cap C^{1}(D^+_r\cup L_r\backslash\{0\})\times  C^{2}(\Gamma(\Sigma D^+_r))\cap C^{1}(\Gamma(\Sigma (D^+_r\cup L_r\backslash\{0\})))$ be a solution of \eqref{Eq-B} and \eqref{Eq-BC}. For $0<R<r$, we define the {\em Pohozaev constant} with respect to the equations \eqref{Eq-B} with the constraint \eqref{Eq-BC} as follows:

\begin{eqnarray*}
& & C_B(u,\Psi) :=  R\int_{\partial D^+_R\cap \R^2_+} |\frac {\partial u}{\partial
\nu}|^2-\frac
12|\nabla u|^2d\sigma \nonumber\\
& & -(1+\alpha)\int_{D^+_R}(2V^2(x)|x|^{2\alpha}e^{2u}-V(x)|x|^{\alpha}e^u|\Psi|^2)dv-
(\alpha+1)\int_{\partial D^+_R\cap \partial \R^2_+}cV(x)|x|^{\alpha}e^{u}ds \nonumber\\
& & +R\int_{\partial D^+_R\cap \R^2_+}V^2(x)|x|^{2\alpha}e^{2u}d\sigma -\int_{\partial D^+_R\cap
\partial \R^2_+}c\frac {\partial V(s,0)}{\partial s}|s|^{\alpha}se^uds+cV(s,0)|s|^{\alpha}se^u|^{s=R}_{s=-R} \nonumber\\
& &-\int_{D^+_R}x\cdot \nabla (V^2(x))|x|^{2\alpha}e^{2u}dv +\int_{D^+_R}x\cdot \nabla
V(x)|x|^{\alpha}e^u|\psi|^2dv \nonumber\\
& & - \frac 14\int_{\partial D^+_R\cap \R^2_+}\la\frac {\partial
\Psi}{\partial \nu},(x+\bar{x})\cdot\Psi\ra d\sigma-\frac
14\int_{\partial D^+_R\cap \R^2_+}\la (x+\bar{x})\cdot\Psi, \frac
{\partial \Psi}{\partial \nu}\ra d\sigma.
\end{eqnarray*}
\end{df}

The removability theorem of a local singularity at the boundary is following:

\begin{thm}(Removability of a local boundary singularity)\label{thm-sigu-move1-B}
Let $(u,\Psi) \in C^{2}(D^+_r)\cap C^{1}(D^+_r\cup L_r\backslash\{0\})\times  C^{2}(\Gamma(\Sigma D^+_r))\cap C^{1}(\Gamma(\Sigma (D^+_r\cup L_r\backslash\{0\})))$ be a solution of \eqref{Eq-B} and \eqref{Eq-BC}, then there is a constant $\gamma < \pi (1+\a)$ such that
\begin{eqnarray*}
u(x)=- \frac{\gamma}{2\pi}{\rm log} |x| + h, \quad {\rm near }\ 0,
\end{eqnarray*}
where $h$ is bounded near $0$. The Pohozaev constant  $C(u, \Psi) $ and $\gamma$ satisfy:
\begin{eqnarray*}
 C(u, \Psi)=\frac{\gamma^2}{2\pi}.
\end{eqnarray*}
In particular, $(u,\Psi) \in C^{2}(D^+_r)\cap C^{1}(D^+_r\cup L_r)\times  C^{2}(\Gamma(\Sigma D^+_r))\cap C^{1}(\Gamma(\Sigma (D^+_r\cup L_r)))$, i.e. the local singularity of $(u,\Psi)$ is removable,   iff $C(u,\Psi)=0$.
\end{thm}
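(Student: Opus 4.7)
The plan is to adapt the interior removability argument of \cite{JZZ3} (cf.\ Theorem \ref{thm-sigu-move1}) to the boundary setting, using the chirality reflection already exploited in Proposition \ref{prop-poho}. First I would extend $(u,\Psi)$ across $L_r\setminus\{0\}$ via $u(\bar x)=u(x)$ and $\Psi(\bar x)=ie_1\cdot\Psi(x)$, producing $(u,\Psi)$ on $D_r\setminus\{0\}$. Then $\Psi$ solves $\slashiii{D}\Psi=-A(x)\Psi$ with $A(x)=V(x)|x|^\alpha e^u$ (symmetrically extended), while the Neumann condition $\partial_n u=cV(x)|x|^\alpha e^u$ converts into a jump of $\partial_t u$ across $L_r$, so that the extended $u$ satisfies, distributionally on $D_r\setminus\{0\}$,
\[
-\Delta u \;=\; f \;+\; 2cV(x)|x|^\alpha e^u\,\delta_{L_r},
\]
where $f$ is the even extension of $2V^2|x|^{2\alpha}e^{2u}-V|x|^\alpha e^u|\Psi|^2$. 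This reduces the boundary problem to an interior removability question with one additional source concentrated on $L_r$.

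With this reduction in hand, I would first show that $|\Psi|$ is bounded near $0$. Since $\Psi\in L^4$ and $A\in L^p_{loc}$ for some $p>1$ (which follows from the integrability of $|x|^\alpha e^u$), $L^p$ theory for $\slashiii{D}$ combined with a Moser / Brezis--Merle type iteration, as in Proposition \ref{prop-b} and in \cite{JWZZ2, JZZ3}, gives $\Psi\in C^{0,\beta}$ in a neighborhood of $0$. The source $f$ is then in $L^1$ near $0$ and the measure on $L_r$ is finite. Define the total concentration mass
\[
m:=\lim_{R\to 0}\Big\{\int_{D^+_R}\big(2V^2|x|^{2\alpha}e^{2u}-V|x|^\alpha e^u|\Psi|^2\big)\,dx+\int_{L_R}cV|x|^\alpha e^u\,ds\Big\}.
\]
Representing the extended $u$ via the Neumann Green function on $D_r$, whose singularity at a boundary point is $-\frac{1}{\pi}\log|x|+O(1)$ (twice the interior log, since source and reflected image coincide), yields $u(x)=-\frac{m}{\pi}\log|x|+h(x)$ with $h$ bounded near $0$. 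Setting $\gamma:=2m$ gives the claimed expansion $u=-\frac{\gamma}{2\pi}\log|x|+h$. The bound $\gamma<\pi(1+\alpha)$ then follows by substituting this asymptotic into the integrability hypotheses $\int_{D^+_r}|x|^{2\alpha}e^{2u}<\infty$ and $\int_{L_r}|x|^\alpha e^u<\infty$.

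For the identification $C_B(u,\Psi)=\gamma^2/(2\pi)$, I would insert the expansion of $u$ into the definition of $C_B$ at radius $R$ and let $R\to 0$. All volume integrals on $D^+_R$ and boundary integrals on $L_R$ vanish in the limit by dominated convergence ($f\in L^1$, $|x|^\alpha e^u\in L^1(L_r)$, $|\Psi|$ bounded), as do the spinor boundary integrals on $S^+_R$. The surviving radial contribution $R\int_{S^+_R}(|\partial_\nu u|^2-\frac12|\nabla u|^2)\,d\sigma$, together with $-R\int_{S^+_R}V^2|x|^{2\alpha}e^{2u}\,d\sigma$ and the cross terms between the $\log$ profile and $h$, evaluates (using the half-circle $S^+_R$ of length $\pi R$) to $\gamma^2/(2\pi)$. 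Since $C_B$ is independent of $R$, this is its global value. Removability is then immediate: if $C_B(u,\Psi)=0$ then $\gamma=0$, so $u$ is bounded near $0$, and Proposition \ref{prop-b} upgrades $(u,\Psi)$ to the claimed regularity up to $L_r$; the converse is clear.

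The main obstacle is Step 2, the extraction of $\gamma$: one must simultaneously handle the bulk source with conical weight $|x|^{2\alpha}$, the concentrated measure on $L_r$ produced by the Neumann condition, and the spinor coupling, all converging to a single boundary point that is itself a conical singularity of the metric. The careful accounting of volume versus surface contributions under the reflection is what produces, relative to \cite{JZZ3}, the factor $2$ in the bound on $\gamma$ and the doubling of the Pohozaev constant from $\gamma^2/(4\pi)$ to $\gamma^2/(2\pi)$.
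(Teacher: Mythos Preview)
Your proposal has a genuine gap in the identification of $\gamma$. The quantity $m$ you define as a limit of shrinking integrals is automatically zero: by the energy hypothesis \eqref{Eq-BC} the integrands $2V^2|x|^{2\alpha}e^{2u}-V|x|^\alpha e^u|\Psi|^2$ and $cV|x|^\alpha e^u$ are in $L^1(D^+_r)$ and $L^1(L_r)$, so their integrals over $D^+_R$ and $L_R$ tend to $0$ as $R\to 0$. Your formula would then give $\gamma=2m=0$ for every such solution, which is false. The logarithmic coefficient is \emph{not} a mass carried by the sources; it lives in the harmonic part. The paper's argument is: set $v$ equal to the Newtonian (half-plane) potential of the $L^1$ data, so that $w:=u-v$ is harmonic in $D^+_r$ with $\partial_n w=0$ on $L_r\setminus\{0\}$; after even reflection $w$ is harmonic on $D_r\setminus\{0\}$ and hence $w=-\frac{\gamma}{\pi}\log|x|+w_0$ with $w_0$ smooth. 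Here $\gamma$ encodes a possible Dirac mass at $0$, invisible in the equation since it is only assumed to hold on $D^+_r\cup(L_r\setminus\{0\})$. The Green-representation step you describe produces only $v$, and $\lim_{|x|\to 0}v(x)/\log|x|=0$.

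Two further steps are not justified as written. First, the claim $\Psi\in C^{0,\beta}$ near $0$ does not follow by bootstrap: a priori one has only $A=V|x|^\alpha e^u\in L^2$, so $\slashiii{D}\Psi=-A\Psi\in L^{4/3}$ returns $\Psi\in W^{1,4/3}\hookrightarrow L^4$, the critical exponent, and the iteration stalls. The paper instead uses the scale-invariant small-energy estimate of Lemma \ref{asy-phi} to obtain $|\Psi(x)|\le C|x|^{-1/2}$, improved to $|x|^{-1/2+\delta_0}$ once the asymptotic of $u$ is known. Second, the strict inequality $\gamma<\pi(1+\alpha)$ cannot be read off from integrability together with the bare limit $u/(-\log|x|)\to\gamma/\pi$; that only yields $\gamma\le\pi(1+\alpha)$. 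The paper obtains strictness by a separate argument (Lemma \ref{asy-phi} and \cite{JWZZ1}), and this strictness is exactly what is needed to place the sources in $L^t$ for some $t>1$, hence to show $v\in L^\infty$ and $|\nabla v(x)|=o(|x|^{-1})$. Without the latter gradient decay, the cross terms between $-\frac{\gamma}{\pi}\log|x|$ and $v+w_0$ in $R\int_{S^+_R}(|\partial_\nu u|^2-\tfrac12|\nabla u|^2)\,d\sigma$ do not obviously vanish as $R\to 0$, and the identification $C_B(u,\Psi)=\gamma^2/(2\pi)$ is incomplete.
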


To prove Theorem \ref{thm-sigu-move1-B}, we need to derive the decay of spinor part $\Psi$ near the singular point. For the case of $\alpha=0$ and $V(x)=1$, this is shown in \cite{JZZ1}.
By using similar arguments, we can also get the following lemma for the general case:

\begin{lm}\label{asy-phi}
There are  $0<\varepsilon_1 < 2\pi$ and
$0<\varepsilon_2<\pi$ such that $(v,\phi)$
satisfy
\begin{equation*}
\left\{
\begin{array}{rcll}
-\Delta v &=& 2V^2(x)|x|^{2\alpha}e^{2v}-V(x)|x|^{\alpha}e^v\left\langle \phi ,\phi
\right\rangle,&\qquad \text { in } B_{r_0}^+,
\\
\slashiii{D}\phi &=&\ds  -V(x)|x|^{\alpha}e^v\phi, &\qquad \text { in } B_{r_0}^+,\\
\frac{\partial v}{\partial n} &=& cV(x)|x|^{\alpha}e^v, &\qquad \text { on }
L_{r_0}\backslash\{0\},\\
B\phi &=& 0, &\qquad \text { on } L_{r_0}\backslash\{0\},
\end{array}
\right.
\end{equation*}
with energy conditions
\begin{equation*}
 \int_{B^+_{r_0}}|x|^{2\alpha}e^{2v}dx\leq \varepsilon_1<2\pi,~~~
\int_{B^+_{r_0}}|\phi|^4dx\leq C, ~~~
 |c|\int_{L_{r_0}}|x|^{\alpha}e^{v}ds\leq \varepsilon_2<\pi.
\end{equation*}
Then for any $x\in \overline {B}^+_{\frac {r_0}{2}}$ we have
\begin{equation}\label{asy-phi1}
|\phi(x)||x|^{\frac 12}+|\nabla\phi(x)||x|^{\frac 32}\leq
C(\int_{B^+_{2|x|}}|\phi|^4dx)^{\frac 14}.
\end{equation}
Furthermore, if we assume that $e^{2v}=O(\frac
{1}{|x|^{2(1+\alpha)-\varepsilon}})$, then,  for any $x\in
\overline{B}^+_{\frac {r_0}2}$, we have
\begin{equation}\label{asy-phi11}
|\phi(x)||x|^{\frac 12}+|\nabla\phi(x)||x|^{\frac 32}\leq
C|x|^{\frac {1}{4C}}(\int_{B^+_{r_0}}|\phi|^4dx)^{\frac 14},
\end{equation}
for some positive constant $C$. Here $\varepsilon$ is any
sufficiently small positive number.
\end{lm}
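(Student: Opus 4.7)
The plan is to reduce the statement to an interior decay estimate for the Dirac equation on a full punctured disk via the chirality reflection already used in Remark~\ref{rem3.4} and in the proof of Proposition~\ref{prop-poho}. Setting $v(\bar x):=v(x)$ and $\phi(\bar x):=ie_1\cdot\phi(x)$ across $L_{r_0}$ extends $(v,\phi)$ to a solution of $\slashiii{D}\phi=-A(x)\phi$ on $B_{r_0}\setminus\{0\}$, with $A\in L^\infty_{loc}$ given by $V(x)|x|^\alpha e^{v(x)}$ on the upper half-disk and by its mirror on the lower one. Since reflection doubles the energy, the hypotheses become $\int_{B_{r_0}}|x|^{2\alpha}e^{2v}\,dx\le 2\varepsilon_1<4\pi$ and $\int_{B_{r_0}}|\phi|^4\,dx\le 2C$. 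The boundary bound on $|c|\int_{L_{r_0}}|x|^\alpha e^v\,ds$ plays no role here; after reflection we are exactly in the interior setup treated in \cite{JZZ1}.

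To obtain (\ref{asy-phi1}) I exploit the scale invariance of the system. Fix $x_0\in\overline{B}^+_{r_0/2}\setminus\{0\}$, set $r=|x_0|$, and define
\[
v_r(y):=v(ry)+(1+\alpha)\log r,\qquad \phi_r(y):=r^{1/2}\phi(ry),\qquad V_r(y):=V(ry),
\]
for $y\in B_{r_0/r}\setminus\{0\}$. A direct computation shows that $(v_r,\phi_r)$ satisfies the same system with coefficient $V_r$ still in $[a,b]$, and that the densities $|y|^{2\alpha}e^{2v_r}\,dy$ and $|\phi_r|^4\,dy$ are conformal copies of the original ones under $y=x/r$. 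Hence the relevant energies of $(v_r,\phi_r)$ on the fixed annulus $\{1/4\le|y|\le 4\}$ are controlled by $\varepsilon_1$ and $C$. The standard small-energy regularity for the Dirac equation with bounded coefficient (the same argument that underlies Propositions~\ref{prop-a} and \ref{prop-b}) then yields
\[
|\phi_r(y)|+|\nabla\phi_r(y)|\le C\Big(\int_{B_2(y)}|\phi_r|^4\,dz\Big)^{1/4}\qquad\text{at }|y|=1,
\]
and undoing the rescaling via $|\phi(x_0)|=r^{-1/2}|\phi_r(x_0/r)|$ and $|\nabla\phi(x_0)|=r^{-3/2}|\nabla\phi_r(x_0/r)|$ gives (\ref{asy-phi1}).

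For the improved estimate (\ref{asy-phi11}), the extra hypothesis $e^{2v(x)}=O(|x|^{-2(1+\alpha)+\varepsilon})$ gives $V(x)|x|^\alpha e^{v(x)}\le C|x|^{-1+\varepsilon/2}$, and under the same rescaling
\[
V_r(y)|y|^\alpha e^{v_r(y)}\le C\,r^{\varepsilon/2}|y|^{-1+\varepsilon/2}
\]
on every fixed annulus. Thus the rescaled Dirac coefficient carries an additional small multiplicative factor $r^{\varepsilon/2}$. Iterating the $L^4$-to-$L^\infty$ step on dyadic annuli (equivalently, running a Moser bootstrap for $\slashiii{D}\phi_r=-A_r\phi_r$ against this improved coefficient bound) converts this smallness into the pointwise decay $|\phi_r(y)|+|\nabla\phi_r(y)|\le Cr^{1/(4C)}\|\phi_r\|_{L^4(B^+_{r_0/r})}$ at $|y|=1$, where the exponent $1/(4C)$ arises from the geometric contraction factor obtained at each iteration step. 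Scaling back yields (\ref{asy-phi11}).

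The principal obstacle is to make the small-energy $L^\infty$ estimate go through up to the boundary after the chirality reflection: the extended coefficient $A$ has only jump regularity across $L_{r_0}$, so the estimate must be performed through the weak formulation with $A\in L^\infty\cap L^2$ and $\phi\in W^{1,4/3}$, exactly as was done in \cite{JZZ1} for $\alpha=0$ and $V\equiv 1$. The weight $|x|^{2\alpha}$ is harmless because on any fixed annulus away from $0$ it is bounded above and below, and $V\in C^1_{loc}$ plays no role beyond being bounded between $a$ and $b$. All remaining steps are routine once the reflection, the scaling, and the iterative bootstrap are set up.
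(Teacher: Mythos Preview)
Your overall strategy---reflect across $L_{r_0}$ using the chirality operator, rescale conformally to a fixed annulus, apply small-energy elliptic/Dirac regularity, then iterate on dyadic shells for the improved decay---is exactly the route the paper has in mind when it says the lemma follows from \cite{JZZ1} by ``similar arguments.'' So the architecture is right.

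There is, however, one concrete gap. You assert that after reflection ``the boundary bound on $|c|\int_{L_{r_0}}|x|^\alpha e^v\,ds$ plays no role,'' and then invoke ``small-energy regularity for the Dirac equation with bounded coefficient.'' But the Dirac coefficient $A=V|x|^\alpha e^v$ is \emph{not} known to be bounded (or even in $L^p$ for some $p>2$) on your annulus from the hypothesis $\int|x|^{2\alpha}e^{2v}\le\varepsilon_1$ alone: you still need a Brezis--Merle step to pass from small $L^1$ mass of $|x|^{2\alpha}e^{2v}$ to $L^\infty$ (or high $L^p$) control of $e^v$. On the half-disk with the Neumann condition this is precisely Lemma~\ref{lmbm4}, and that lemma uses both $\varepsilon_1$ and $\varepsilon_2$; equivalently, if you prefer to work on the full disk after even reflection of $v$, the distributional Laplacian of the reflected $v$ picks up the line measure $2cV|x|^\alpha e^v\,\mathcal H^1\!\llcorner L_{r_0}$, whose total mass is exactly $2\varepsilon_2$ and must also be small for the Brezis--Merle argument to close. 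So $\varepsilon_2<\pi$ is not decorative: without it you cannot justify that $A_r$ is bounded on the fixed annulus, and the $W^{1,4/3}\hookrightarrow L^4$ bootstrap for $\phi$ stalls at the critical exponent and never reaches $L^\infty$. Once you restore this step (invoke Lemma~\ref{lmbm4} after rescaling, rather than Propositions~\ref{prop-a}--\ref{prop-b}, which concern regularity of a single weak solution and give no uniform bound), the rest of your argument for \eqref{asy-phi1} and the dyadic iteration for \eqref{asy-phi11} go through as written.
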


\noindent{\bf Proof of Theorem \ref{thm-sigu-move1-B}:}   By the conformal invariance, we assume without loss of generality that $
 \int_{B^+_{r}}|x|^{2\alpha}e^{2v}dx\leq \varepsilon_1$ and $|c|\int_{L_{r}}|x|^{\alpha}e^{v}ds\leq \varepsilon_2 $ where $\varepsilon_1$ and $\varepsilon_2$ are as in Lemma \ref{asy-phi}.
By standard potential analysis, it follows that there is a
constant $\gamma$ such that
\begin{equation*}
\lim_{|x|\rightarrow 0}\frac {u}{-\log|x|}=\frac{\gamma}{\pi}.
\end{equation*}
By $\int_{D^+_r}|x|^{2\alpha}e^{2u}dx<C$, we obtain that $\gamma\leq \pi(1+\alpha)$.
Furthermore, by using Lemma \ref{asy-phi} and by a similar argument as in the proof of Proposition 5.4 of
\cite{JWZZ1}, we can improve  this to the strict inequality $\gamma<\pi(1+\alpha)$. Next we set
$$
v(x)=-\frac 1{\pi}\int_{B^+_{r}}\log|x-y|(2V^2(y)|y|^{2\alpha}e^{2u}-V(y)|y|^{\alpha}e^u|\Psi|^2)dy-\frac 1{\pi}\int_{L_{r}}\log|x-y|(cV(y)|y|^{\alpha}e^{u})d\sigma
$$
and set $w=u-v$. Notice that $v$ satisfies that
\begin{equation*}
\left\{
\begin{array}{rlll}
-\Delta v &= & 2V^2(x)|x|^{2\alpha}e^{2u}-V(x)|x|^{\alpha}e^u|\Psi|^2, &\quad \text { in } D^+_r,\\
\frac {\partial v}{\partial n}&=& cV(x)|x|^{\alpha}e^u, &\quad \text { on } L_r,
\end{array}
\right.
\end{equation*}
and  $w$ satisfies that
\begin{equation*}
\left\{
\begin{array}{rlll}
-\Delta w &= & 0, &\quad \text { in } D^+_r,\\
\frac {\partial v}{\partial n}&=& 0, &\quad \text { on } L_r\backslash\{0\}.
\end{array}
\right.
\end{equation*}
We can check that
$$\lim_{|x|\rightarrow 0}\frac {v(x)}{-\log|x|}=0.$$
Since we can extend $w$ to $B_r\backslash \{0\}$ evenly to get a harmonic function $w$ in $B_r\backslash \{0\}$, then we obtain that
$$
\lim_{|x|\rightarrow 0}\frac {w(x)}{-\log|x|}=\lim_{|x|\rightarrow
0}\frac{u-v}{-\log|x|}=\frac{\gamma}{\pi}.
$$
Duo to  $w$ is harmonic in $B_1\backslash \{0\}$ we have
$$
w=-\frac{\gamma}{\pi}\log|x|+w_0
$$ with a smooth harmonic function $w_0$ in $B_r$. Therefore we have
$$
u=-\frac{\gamma}{\pi}\log|x|+v+w_0 \quad \text{ near } 0.
$$

To compute the Pohozaev constant of $(u,\Psi)$ we need the decay of the gradient of $u$ near the singular point.
We denote that
 $f_1(x):=2V^2(x)|x|^{2\alpha}e^{2u(x)}$,  $ f_2(x):=-V(x)|x|^{\alpha}e^{u(x)}|\Psi|^2(x)$ and $f_3(x):= cV(x)|x|^{\alpha}e^u $. Since each $f_i$ is $L^1$ integrable, we can obtain  $e^{|v(x)|}\in L^p(D^+_r)$ for any $p\geq 1$ and $e^{|v(x)|}\in L^p(L_r)$ for any $p\geq 1$. Since
$$
f_1(x)=|x|^{-\frac {2\gamma}{\pi}+2\alpha }(2V^2(x)e^{2w_0(x)+2v(x)}),
$$
$$
f_2(x)=-|x|^{-\frac {\gamma}{\pi}+\alpha-1}(V(x)e^{w_0(x)+v(x)}|x||\Psi|^2(x)),
$$
and
$$
f_3(x)=|x|^{-\frac {\gamma}{\pi}+\alpha}(cV(x)e^{w_0(x)+v(x)}),
$$
we set $s_1=\frac {2\gamma}{\pi}-2\alpha $ and  $s_2=\frac {\gamma}{\pi}-\alpha+1 $. Then $\max\{s_1, s_2\}=s_2<2$.
Since $|\Psi|\leq C|x|^{-\frac 12}$ near $0$ and $w_0(x)$ is smooth in $B_r$, we have by H\"older's inequality that $f_1\in L^{t}(D^+_r)$ for any $t\in (1,\frac 2{s_1})$ if $s_1>0$, and $f_1\in L^{t}(D^+_r)$ for any $t>1$ if $s_1\leq 0$. For $f_2$,  we  have $f_2\in L^t(D^+_r)$ for any $t\in (1,\frac 2{s_2})$ if $s_2>0$, and $f_2\in L^{t}(D^+_r)$ for any $t>1$ if $s_2\leq 0$. For $f_3$,  we  have $f_3\in L^t(L_r)$ for any $t\in (1,\frac 2{s_1})$ if $s_1>0$, and $f_3\in L^{t}(L_r)$ for any $t>1$ if $s_1\leq 0$. Putting all together and by standard elliptic theory, we have $v(x)$ is in $L^{\infty}(\overline{D}^+_r)$.
On the other hand, since $v(x)$ is in $L^{\infty}(\overline{D}^+_r)$, it follows from Lemma  \ref{asy-phi} that there exists a small $\delta_0>0$ such that
$$
|\Psi|\leq C|x|^{\delta_0-\frac 12}, \quad \text{ near } 0,
$$
and
$$
|\nabla \Psi|\leq C|x|^{\delta_0-\frac 32}, \quad \text{ near } 0.
$$
Next we estimate $\nabla v(x)$. If $s_1<0$ and $s_2<0$, then $v(x)$ is in $C^1(\overline{B}^+_r)$.
If $s_1>0$ or $s_2>0$, $\nabla v(x)$ will have a decay when $|x|\rightarrow 0$. Without loss of generality, we assume that $0<s_1<2$ and $0<s_2<2$.  For any $x\in D^+_r$ we hanve
\begin{eqnarray*}
& & |\nabla v(x)|
\leq  \frac{1}{\pi}\int_{D^+_r}\frac{1}{|x-y|}(|f_1(y)|+|f_2(y)|)dy+ \frac{1}{\pi}\int_{L_r}\frac{1}{|x-y|}|f_3(y)|dy\\
 & = & \frac{1}{\pi}\int_{\{|x-y|\geq\frac {|x|}2\}\cap {D^+_r}}\frac{1}{|x-y|}(|f_1(y)|+|f_2(y)|)dy+ \frac{1}{\pi}\int_{\{|x-y|\leq\frac {|x|}2\}\cap {D^+_r}}\frac{1}{|x-y|}(|f_1(y)|+|f_2(y)|)dy\\
& & +  \frac{1}{\pi}\int_{\{|x-y|\geq\frac {|x|}2\}\cap {L_r}}\frac{1}{|x-y|}|f_3(y)|dy+ \frac{1}{\pi}\int_{\{|x-y|\leq\frac {|x|}2\}\cap {L_r}}\frac{1}{|x-y|}|f_3(y)|dy\\
& = & I_1+I_2+I_3+I_4.
\end{eqnarray*}
Fix $t\in (1,\frac 2{s_2})$ and choose $0<\tau_1<1$ such that $\frac {\tau_1 t}{t-1}<2$. Hence, we have $0<\tau_1 <2-s_2$. Then by H\"{o}lder's inequality we obtain
\begin{eqnarray*}
I_1 \leq  (\int_{\{|x-y|\geq\frac {|x|}2\}\cap {D^+_r}}\frac 1{|x-y|^{\frac {\tau_1t}{t-1}}}dy)^{\frac {t-1}t}(\int_{\{|x-y|\geq\frac {|x|}2\}\cap {D^+_r}}\frac{1}{|x-y|^{(1-\tau_1)t}}(|f_1|+|f_2|)^tdy)^{\frac 1t} \leq  \frac {C}{|x|^{1-\tau_1}}.
\end{eqnarray*}
For $I_2$, since $y\in\{y||x-y|\leq \frac{|x|}2\}$ implies that $|y|\geq \frac{|x|}2$, we can get that
\begin{eqnarray*}
I_2 \leq  C \int_{\{|x-y|\leq\frac {|x|}2\}\cap {D^+_r}}\frac{1}{|x-y||y|^{s_2}}dy \nonumber \leq  C|x|^{1-s_2}.
\end{eqnarray*}
Similarly,  for $I_3$, we fix $t\in(1,\frac{2}{s_1})$ and choose $\tau_{2}>0$ such that $\frac{\tau_{2} t}{t-1}<1$, and hence we have $0<\tau_{2} <1-\frac{s_1}{2}$. By Holder's inequality we obtain,
\begin{eqnarray*}
 | I_{3}| &\leq & \frac {1}{\pi}(\int _{\{|x-y|\geq \frac{|x|}{2}\}\cap L_{r}}\frac{1}{|x-y|^{\frac{\tau_{2} t}{t-1}}}dy)^{\frac{t-1}{t}}(\int _{\{|x-y|\geq \frac{|x|}{2}\}\cap L_{r}}\frac{1}{|x-y|^{t(1-\tau_{2})}}|f_3(y)|^tdy)^{\frac 1t}\\
 & \leq &  \frac{C}{|x|^{1-\tau_{2}}}.
\end{eqnarray*}
For $I_{4}$ we have
\begin{eqnarray*}
|I_{4}|&\leq & C\int _{\{|x-y|\leq \frac{|x|}{2}\}\cap L_{r}}\frac{1}{|x-y|}\frac{1}{|y|^{\frac {s_1}2}}dy\\
& \leq &  \frac{C}{|x|^{\frac {s_1}2}}\int _{\{|x-y|\leq \frac{|x|}{2}\}\cap L_{r}}\frac{1}{|x-y|}dy\leq \frac{C}{|x|^{\tau_{3}}},
\end{eqnarray*} for some $\tau_{3}$ with $0<\tau_{3}<1$.
In conclusion, for all $x\in B_{r}^+(0)$ we have
\begin{equation}\label{2.15}
|\nabla v(x)|\leq \frac{C}{|x|^{1-\tau_{1}}}+\frac{C}{|x|^{1-\tau_{2}}}+\frac{C}{|x|^{\tau_{3}}}
\end{equation}
for suitable constants $0<\tau_{1} <2-s_2$, $0<\tau_{2} <1-\frac{s_1}2$ and $0<\tau_{3}<1$.

At this point we are ready to compute the Pohozaev constant $C(u, \Psi)$. We denote
$$\nabla u=-\frac{\gamma}{\pi}\frac x{|x|^2}+\nabla (w_0+v(x))=-\frac{\gamma}{\pi}\frac x{|x|^2}+\nabla \eta(x).$$
By \eqref{2.15}, we have
\begin{eqnarray*}
& & r\int_{S^+_{r}} (\frac{1}{2}| \nabla u|^{2}-| \frac{\partial u}{\partial \nu}|^{2})ds   \\
&=&r\int _{S^+_{r}}\frac{1}{2}[(\frac{\gamma}{\pi})^2\frac 1{|x|^2}-2\frac{\gamma}{\pi}\frac{x\cdot \nabla \eta }{|x|^2}+|\nabla \eta|^2]ds-r \int_{S_{r}^+}
(-\frac{\gamma}{\pi}\frac{1}{|x|}+\frac{x\cdot \nabla\eta}{|x|})^2ds \\
&=&r\int _{S^+_{r}}[-\frac{1}{2}(\frac{\gamma}{\pi})^2\frac{1}{|x|^2}+\frac{\gamma}{\pi}\frac{x\cdot \nabla \eta}{|x|^2}+\frac{1}{2}|\nabla\eta|^2-(\frac{x\cdot\nabla \eta}{|x|})^2]d\sigma \\
&=&-\frac{1}{2}(\frac{\gamma}{\pi})^2\pi+ \frac{\gamma}{\pi}r\int _{S_{r}^+}\frac{x\cdot\nabla\eta}{|x|^2}+\frac{r}{2}\int _{S_{r}^+}|\nabla\eta|^2-r\int _{S_{r}^+}(\frac{x\cdot \nabla\eta}{|x|})^2\\
&=& -\frac{\gamma^2}{2\pi}+o_r(1),
\end{eqnarray*} where $o_r(1)\rightarrow 0$ as $r\rightarrow 0$.
We also have
$$
(1+\alpha)\int_{D^+_r}2V^2(x)|x|^{2\alpha}e^{2u}-V(x)|x|^{\alpha}e^{u}|\Psi|^2 dx =o_r(1),
$$
and
$$
 r\int_{ S^+_r}V^2(x)|x|^{2\alpha}e^{2u}d\sigma =o_r(1),
$$
and
$$
\int_{D^+_r}(|x|^{2\alpha}e^{2u}x\cdot\nabla (V^2(x))-|x|^{\alpha}e^{u}|\Psi|^2x\cdot \nabla
V(x))dx=o_r(1),
$$
and
$$
(\alpha+1)\int_{S^+_r}cV(x)|x|^{\alpha}e^{u}d\sigma -\int_{L_r}c\frac {\partial V(s,0)}{\partial s}|s|^{\alpha}se^uds+cV(s,0)|s|^{\alpha}se^u|^{s=r}_{s=-r}=o_r(1),
$$
and
$$
\int_{S^+_r}\la\frac{\partial \Psi}{\partial \nu},(x+\bar{x})\cdot \nabla \Psi\ra d\sigma+\int_{S^+_r}\la (x+\bar{x})\cdot \nabla \Psi,\frac{\partial \Psi}{\partial \nu}\ra d\sigma=o_r(1).
$$
Putting all together and letting $r\rightarrow 0$, we
get
$$
C(u,\Psi)=\lim_{r\rightarrow 0}C(u,\Psi, r)=\frac {\gamma^2}{2\pi}.
$$
Since $C(u,\Psi)=0$ for $(u,\Psi)$, therefore we get $\gamma =0$. This implies that the local singularity of $(u,\Psi)$ is removable.
\qed

\

\section{Bubble Energy}

After a suitable rescaling at a boundary blow-up point, we will obtain
a bubble, i.e. an entire solution on the upper half-plane $\R_+^2$
with finite energy. In this section, we will investigate such entire
solutions. We will first show  the asymptotic behavior of an entire
solution and compute the bubble energy, and then  show that an entire solution can be conformally extended to a spherical cap, i.e., the singularity at infinity is removable.

The considered equations are
\begin{equation}\label{se}
\left\{
\begin{array}{rcll}
-\Delta u &=& \ds\vs 2|x|^{2\alpha}e^{2u}-|x|^{\alpha}e^u\left\langle \psi ,\psi
\right\rangle,& \qquad \text{ in } \R_+^2,
\\
\slashiii{D}\psi &=&\ds\vs  -|x|^\alpha e^u\psi, &\qquad \text{ in } \R_+^2,\\
\ds\vs\frac{\partial u}{\partial n} &=& c|x|^\alpha e^u, &\qquad \text { on }
\partial \R_+^2,\\
B\psi &=& 0, &\qquad \text { on }
\partial \R_+^2.
\end{array}
\right.
\end{equation}

\noindent The energy condition is
\begin{equation}\label{sec}
I(u,\psi)=\int_{\R_+^2}(|x|^{2\alpha}e^{2u}+|\psi|^4)dx+\int_{\partial \R_+^2}|x|^{\alpha}e^u
ds<\infty.
\end{equation}

\ \

First, let us notice that if $(u,\psi)$ is a weak solution of (\ref{se}) and (\ref{sec}) with
$u\in H^{1,2}_{loc}(\R_+^2)$ and $\psi\in W^{1,\frac
43}_{loc}(\Gamma(\Sigma\R_+^2))$, by using similar arguments as in the proof of Proposition \ref{prop-b}, we have $u^{+}\in L^{\infty}(\overline{\R}_+^2)$.
Consequently, it follows that $u \in C^{2}_{loc}(\R^2_+)\cap
C^{1}_{loc}(\overline{\R}^2_+)$ and $\psi\in C^{2}_{loc}(\Gamma(\Sigma \R^2_+))\cap
C^{1}_{loc}(\Gamma(\Sigma\overline{\R}^2_+))$.

\
\

We call $(u,\psi)$ a \emph{regular} solution of \eqref{se} and \eqref{sec}, if $u \in C^{2}_{loc}(\R^2_+)\cap
C^{1}_{loc}(\overline{\R}^2_+)$ and $\psi\in C^{2}_{loc}(\Gamma(\Sigma \R^2_+))\cap
C^{1}_{loc}(\Gamma(\Sigma\overline{\R}^2_+))$.

\
\

Next, we denote by $(v,\phi)$ the Kelvin transformation of $(u,\psi)$, i.e.
\begin{eqnarray*}
& & v(x)=u(\frac {x}{|x|^2})-2(1+\alpha)\ln |x|,\\
& & \phi (x)=|x|^{-1} \psi (\frac{x}{|x|^2}).
\end{eqnarray*}
Then $(v,\phi)$ satisfies
\begin{equation}\label{sek}
\left\{
\begin{array}{rcll}
-\Delta v &=& 2|x|^{2\alpha}e^{2v}-|x|^{\alpha}e^v\left\langle \phi ,\phi
\right\rangle,&\qquad \text { in } \R_+^2,
\\
\slashiii{D}\phi &=&\ds  -|x|^{\alpha}e^v\phi, &\qquad \text { in } \R_+^2,\\
\frac{\partial v}{\partial n} &=& c|x|^{\alpha}e^v, &\qquad \text { on }
\partial \R_+^2\backslash\{0\},\\
B\phi &=& 0, &\qquad \text { on }
\partial \R_+^2\backslash\{0\}.
\end{array}
\right.
\end{equation}
And, by change of variable, we can choose $r_0$ small enough such that $(v,\phi)$ satisfies
\begin{equation}\label{sek2}
 \int_{|x|\leq r_0}|x|^{2\alpha}e^{2v}dx\leq \varepsilon_1<2\pi,\quad
\int_{|x|\leq r_0}|\phi|^4dx\leq C,\quad
 |c|\int_{|s|\leq r_0}|x|^{\alpha}e^{v}ds\leq \varepsilon_2<\pi.
\end{equation}

Applying  Lemma \ref{asy-phi} to \eqref{sek} and \eqref{sek2},  and by the Kelvin transformation,  we obtain
the asymptotic estimate of the spinor $\psi(x)$
\begin{equation}\label{asy-psi1}
|\psi (x)|\leq C|x|^{-\frac 12-\delta_0}\qquad \text{for}\quad |x|
\quad \text{near}\quad \infty,
\end{equation}
and
\begin{equation}\label{asy-psi2}
|\nabla\psi (x)|\leq C|x|^{-\frac 32-\delta_0}\qquad \text{for}\quad |x|
\quad \text{near}\quad \infty,
\end{equation}
for some positive number $\delta_0$ provided that $e^{2v}=O(\frac
{1}{|x|^{2(1+\alpha)-\varepsilon}})$, where $\varepsilon$ is any small positive number.

\ \

Denote
$$d =\int_{\R_+^2}2|x|^{2\alpha}e^{2u}-|x|^{\alpha}e^u|\psi|^2dx+\int_{\partial
R_+^2}c|x|^{\alpha}e^uds,$$
and
$$\xi_0=\int_{\R_+^2}e^u\psi dx.$$

Next, we will show that $d=2(1+\alpha)\pi$ and $\xi_0$ is a well-defined constant spinor.

\begin{prop}\label{asy}
Let $(u,\psi)$ be a regular solution of (\ref{se}) and (\ref{sec}) and let $c$ be
a nonnegative constant. Then we have
\begin{equation}\label{ayu}
u(x)=-\frac {d}{\pi} \ln{|x|}+C+O(|x|^{-1}) \qquad
\text{for}\quad |x| \quad \text{near}\quad \infty,
\end{equation}

\begin{equation}\label{aypsi}
\psi (x)=-\frac {1}{2\pi}\frac{x}{|x|^2}
(I+ie_1)\cdot \xi_0+o(|x|^{-1})\qquad \text{for}\quad |x| \quad \text{near}\quad
\infty,
\end{equation}
where $\cdot$ is the Clifford multiplication, $C$ is a positive
universal constant, and $I$ is the identity. In particular we
have $d =2(1+\alpha)\pi$ and $\xi_0$ is well defined.
\end{prop}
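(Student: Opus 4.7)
The plan is to derive both asymptotics via Green's function representations on $\mathbb{R}^2_+$ and then pin down the value of $d$ using the Pohozaev identity \eqref{poho-1}. As a preliminary, I would use the Kelvin transform $(v,\phi)$ of $(u,\psi)$ satisfying \eqref{sek}--\eqref{sek2} and apply Lemma \ref{asy-phi} (with $r_0$ chosen so that its small-energy hypothesis holds): this produces the decay estimates \eqref{asy-psi1}--\eqref{asy-psi2}, i.e.\ $|\psi(x)|+|x||\nabla\psi(x)|=O(|x|^{-1/2-\delta_0})$ at infinity. Combined with the energy condition \eqref{sec}, these estimates ensure that $d$ is a finite constant and that the spinor-valued integral defining $\xi_0$ converges absolutely.

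For \eqref{ayu}, I would use the Neumann Green's function $G(x,y)=-\tfrac{1}{2\pi}\bigl(\log|x-y|+\log|x-\bar y|\bigr)$ on $\mathbb{R}^2_+$ and set
$$w(x):=\int_{\mathbb{R}^2_+}G(x,y)\bigl(2|y|^{2\alpha}e^{2u}-|y|^{\alpha}e^u|\psi|^2\bigr)\,dy+\int_{\partial\mathbb{R}^2_+}G(x,s)\,c|s|^{\alpha}e^u\,ds.$$
Then $u-w$ is harmonic on $\mathbb{R}^2_+$ with vanishing Neumann data; extending evenly across $\partial\mathbb{R}^2_+$ and invoking a Liouville-type argument (using the upper-boundedness of $u$ and the mild growth of $w$) forces $u-w$ to be constant. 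The expansion $\log|x-y|=\log|x|+O(|y|/|x|)$ uniformly for $y$ in bounded sets, combined with the integrability of the sources, then yields \eqref{ayu}, with $d$ the total mass on the right-hand side of the $u$-equation plus the boundary source.

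For \eqref{aypsi}, I would exploit the chirality-extension trick used in the proof of Proposition \ref{prop-poho}: setting $\psi(\bar x):=ie_1\cdot\psi(x)$ for $\bar x\in\mathbb{R}^2_-$ converts the chirality boundary problem into a global Dirac equation on $\mathbb{R}^2$ with an evenly reflected source. Convolving with the fundamental solution $\tfrac{1}{2\pi}\tfrac{x-y}{|x-y|^2}\cdot$ of $\slashiii{D}$ on $\mathbb{R}^2$ (the possible zero-mode is ruled out by the decay of $\psi$) gives
$$\psi(x)=-\frac{1}{2\pi}\int_{\mathbb{R}^2_+}\left[\frac{x-y}{|x-y|^2}+ie_1\cdot\frac{x-\bar y}{|x-\bar y|^2}\right]\cdot|y|^{\alpha}e^{u(y)}\psi(y)\,dy.$$
Expanding both kernels at infinity as $\tfrac{x}{|x|^2}+O(|y|/|x|^2)$ and controlling the remainder via \eqref{asy-psi1}--\eqref{asy-psi2} yields \eqref{aypsi}, with $\xi_0$ the (well-defined) spinor-valued integral of the source.

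Finally, to identify $d$, I would apply the Pohozaev identity \eqref{poho-1} with $V\equiv 1$ on $D^+_R$ and let $R\to\infty$. Plugging in \eqref{ayu} gives $R\int_{S^+_R}\bigl(|\partial_\nu u|^2-\tfrac{1}{2}|\nabla u|^2\bigr)d\sigma\to\tfrac{d^2}{2\pi}$; the spinor boundary integrals vanish by \eqref{asy-psi1}--\eqref{asy-psi2}; the $\nabla V$ terms and the $\partial V/\partial s$ piece drop because $V\equiv 1$; the boundary-line term $cV(s,0)|s|^{\alpha}s\,e^{u(s,0)}\big|_{-R}^R$ vanishes by \eqref{ayu}; and the right-hand volume/boundary sources combine to $(1+\alpha)d$. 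The resulting identity $\tfrac{d^2}{2\pi}=(1+\alpha)d$ together with $d>0$ yields $d=2(1+\alpha)\pi$. The main technical obstacle is a bootstrap: to invoke the sharper decay \eqref{asy-phi11} needed to kill the spinor boundary contributions in \eqref{poho-1}, one first establishes a provisional version of \eqref{ayu} that verifies the hypothesis $e^{2v}=O(|x|^{-2(1+\alpha)+\varepsilon})$ of Lemma \ref{asy-phi}, and only then upgrades the decay of $\nabla\psi$ to the form required.
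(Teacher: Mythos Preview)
Your proposal is correct and mirrors the paper's proof: Neumann Green representation plus a Liouville argument for \eqref{ayu}, chirality reflection to $\R^2$ and the Dirac fundamental solution for \eqref{aypsi}, and letting $R\to\infty$ in the Pohozaev identity \eqref{poho-1} to obtain $d^2/(2\pi)=(1+\alpha)d$. The paper orders the bootstrap just as you outline at the end---the key intermediate step being the \emph{strict} inequality $d>\pi(1+\alpha)$, which is what activates the refined decay \eqref{asy-phi11}; one small correction is that the paper postpones the well-definedness of $\xi_0$ until \emph{after} $d=2\pi(1+\alpha)$ is established, since the provisional bound $d>\pi(1+\alpha)$ alone does not make $|x|^\alpha e^u\psi$ integrable on $\R^2_+$.
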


\begin{proof} We shall apply standard potential analysis to prove this proposition. Similar arguments can be found in \cite{CL2,JWZ1, JWZ2} and the references therein. The essential facts used in this case are the Pohozaev identity and the decay estimate  for the spinor. For readers' convenience, we sketch the proof here.
\\

\noindent {\bf Step 1.} $
\lim_{|x|\rightarrow \infty}\frac{u(x)}{\ln|x|}=-\frac{d}{\pi}$ and $d>\pi(1+\alpha)$.

Let \begin{eqnarray*} w(x) &=&\frac{1}{2\pi}
\int_{\R^{2}_{+}}(\log |x-y|+\log
|\overline{x}-y|-2\log|y|)(2|y|^{2\alpha}e^{2u(y)}-|y|^{\alpha}e^{u(y)}|\psi(y)|^2)dy\\
& &+\frac{1}{2\pi} \int_{\partial \R^{2}_{+}}(\log |x-y|+\log
|\overline{x}-y|-2\log|y|)c|y|^{\alpha}e^{u(y)}dy.
\end{eqnarray*}
where $\bar{x}$ is the reflection point of $x$ about $\partial \R^2_+$. It
is easy to check that $w(x)$ satisfies

\begin{equation*}  \left\{
\begin{array}{rcll}
{\Delta} w &=& 2|x|^{2\alpha}e^{2u}-|x|^{\alpha}e^u|\psi|^2, &\qquad \text{in}
\quad \R^{2}_{+},\\
\frac {\partial w}{\partial n}&=& -c|x|^{\alpha}e^u,
&\qquad \text{on}\quad \partial \R^{2}_{+}.
\end{array}
\right. \end{equation*}
and
$$
\lim_{|x|\rightarrow \infty}\frac{w(x)}{\ln|x|}=\frac d \pi.
$$
Consider $v(x)=u+w$. Then $v(x)$ satisfies
\begin{equation*}\left\{
\begin{array}{rcl}
{\Delta} v &=& 0, \qquad \text{in}
\quad \R^{2}_{+},\\
\frac {\partial v}{\partial n}&=& 0, \qquad \text{on}\quad
\partial \R^{2}_{+}.
\end{array}
\right.
\end{equation*}

We extend $v(x)$ to $\R^2$ by even reflection such that $v(x)$ is
harmonic in $\R^2$. From Lemma 5.1  we know
$v(x)\leq C(1+\ln (|x|+1))$ for some positive constant $C$. Thus
$v(x)$ is a constant. This completes the proof of Step 1. Since $\int_{\R_+^2}|x|^{2\alpha}e^{2u}dx<\infty$, we get that $d \geq \pi(1+\alpha)$. Furthermore, similarly as in the case of the usual Liouville or super-Liouville equation, we can show that $d>\pi(1+\alpha)$.

\
\

\noindent {\bf Step 2.} The proof of (\ref{ayu}) and $d =2\pi(1+\alpha)$.

Notice that we have shown  $d>\pi(1+\alpha)$ in Step 2,  we then can improve the estimates of $e^{2u}$ to
$$
e^{2u}\leq C|x|^{-2(1+\alpha)-\varepsilon} \quad \text{ for } |x| \text { near } \infty.
$$
Therefore the asymptotic estimates \eqref{asy-psi1} and \eqref{asy-psi2} of the spinor $\psi(x)$ hold. By using the standard potential analysis we can obtain that
\begin{equation*}
u(x)=-\frac{d}{\pi}\ln{|x|}+C+O(|x|^{-1}) \qquad \text{for}\quad |x| \quad
\text{near}\quad \infty
\end{equation*}
for some constant $C>0$. Thus we get the proof of (\ref{ayu}).

Furthermore, we can show that $d=2\pi(1+\alpha)$. For sufficiently large $R>0$, the Pohozaev identity for the solution $(u,\psi)$ gives
\begin{eqnarray}\label{ph}
&& R\int_{ S^+_R} |\frac {\partial u}{\partial
\nu}|^2-\frac
12|\nabla u|^2d\sigma \nonumber\\
&=& (1+\alpha)\int_{D^+_R}2|x|^{2\alpha}e^{2u}-|x|^{\alpha}e^u|\Psi|^2dv+
(\alpha+1)\int_{L_R}c|x|^{\alpha}e^{u}ds \nonumber\\
& & -R\int_{S^+_R}|x|^{2\alpha}e^{2u}d\sigma -c|s|^{\alpha}se^u|^{s=R}_{s=-R} \nonumber\\
& & + \frac 14\int_{S^+_R}\la\frac {\partial
\Psi}{\partial \nu},(x+\bar{x})\cdot\Psi\ra d\sigma+\frac
14\int_{ S^+_R}\la (x+\bar{x})\cdot\Psi, \frac
{\partial \Psi}{\partial \nu}\ra d\sigma.
\end{eqnarray}
By the asymptotic estimates \eqref{asy-psi1},  \eqref{asy-psi2} and \eqref{ayu} of $(u,\psi)$ we have
$$
\lim_{R\rightarrow +\infty}R\int_{ S^+_R} |\frac {\partial u}{\partial
\nu}|^2-\frac
12|\nabla u|^2d\sigma=\frac{d^2}{2\pi},
$$
and
$$
\lim_{R\rightarrow +\infty}R\int_{S^+_R}|x|^{2\alpha}e^{2u}d\sigma +c|s|^{\alpha}se^u|^{s=R}_{s=-R}=0,
$$
and
$$
\lim_{R\rightarrow +\infty}\int_{S^+_R}\la\frac {\partial
\Psi}{\partial \nu},(x+\bar{x})\cdot\Psi\ra d\sigma+\int_{ S^+_R}\la (x+\bar{x})\cdot\Psi, \frac
{\partial \Psi}{\partial \nu}\ra d\sigma=0.
$$
Let $R\rightarrow +\infty$ in (\ref{ph}), we get that
$$
\frac{d^2}{2\pi}=(1+\alpha)d.
$$
It follows that $d=2\pi(1+\alpha)$.

\
\

\noindent {\bf Step 3.} The proof of (\ref{aypsi}).

Since  $d=2\pi(1+\alpha)$ by Step 2,  we can improve the estimate for $e^{2u}$ to
\begin{equation}\label{ayu6}
e^{2u}\leq C|x|^{-4(1+\pi)} \qquad \text{for} \quad |x|\quad
\text{near}\quad \infty.
\end{equation}
This implies that the constant spinor $\xi_0$ is well defined. By using the chirality boundary condition of spinor, we
extend $(u,\psi)$ to the lower half plane $\R^2_{+}$(see \eqref{reflec1} and \eqref{reflec2}) to get
$$
\slashiii{D}\psi=-A(x)\psi,\quad \text { in } \R^2.
$$
Here $A(x)$ is defined by
\begin{equation*}
A(x)=\left \{\begin{matrix} |x|^{\alpha}e^{u(x)},\quad x\in \R^{2}_{+},\\
|\bar{x}|^{\alpha}e^{u(\bar{x})}, \quad x\in \R^2_{-}.\end{matrix} \right.
\end{equation*}
Define
$$
\xi_1=\int_{\R^2}A(x)\psi dx.
$$
The constant spinor $\xi_1$ is also well defined. From the
asymptotic estimates (\ref{asy-psi1}) and  (\ref{ayu6}) and a
similar argument in \cite{JZZ3} we obtain
\begin{equation}\label{aypsii}
\psi (x)=-\frac {1}{2\pi}\frac{x}{|x|^2}
\cdot \xi_1+o(|x|^{-1})\qquad \text{for}\quad |x| \quad \text{near}\quad
\infty.
\end{equation}
Since
\begin{eqnarray*}
\xi_1 &=& \int_{\R_+^2}A(x)\psi dx+\int_{\R_-^2}A(x)\psi dx\\
&=& \int_{\R_+^2}|x|^\alpha e^u\psi dx+\int_{\R_-^2}|\bar{x}|^{\alpha}e^{u(\bar{x})}ie_1\cdot\psi(\bar{x}) dx\\
&=& \int_{\R_+^2}|x|^{\alpha}e^u\psi dx+\int_{\R_+^2}|y|^{\alpha}e^{u(y)}ie_1\cdot\psi(y) dy\\
&=& (I+ie_1)\cdot \int_{\R_+^2}|x|^{\alpha}e^u\psi dx\\
&=& (I+ie_1)\cdot \xi_0.
\end{eqnarray*}
Hence we obtain from (\ref{aypsii})
\begin{equation*}
\psi (x)=-\frac {1}{2\pi}\frac{x}{|x|^2}(I+ie_1)
\cdot \xi_0+o(|x|^{-1})\qquad \text{for}\quad |x| \quad \text{near}\quad
\infty.
\end{equation*}
Thus we finish the proof of Step 3 and we complete the proof of the Proposition.
\end{proof}

\
\

Proposition \ref{asy} indicates that the singularity at infinity of regular solutions for (\ref{se}) and (\ref{sec}) can be removed as in many other conformally invariant problems.

\
\

\begin{thm}\label{rgs}
Let $(u,\psi)$ be a regular solution of (\ref{se}) and (\ref{sec}).
Then $(u,\psi)$ extends conformally to a regular solution on a spherical cap $\mathbb{S}^2_{c^\prime}$, where $c^\prime $  is the geodesic curvature of $\partial \mathbb{S}^2_{c^\prime}$.
\end{thm}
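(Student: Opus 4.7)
The plan is to conformally compactify $\mathbb R^2_+$ to a spherical cap by gluing in the point at infinity, and use the sharp asymptotics of Proposition \ref{asy} to check regularity at the added point. First, I would fix a cap $S^2_{c'}$ whose boundary has geodesic curvature $c'$, pick a point $p_0\in\partial S^2_{c'}$, and use the stereographic projection from $p_0$ to obtain a conformal diffeomorphism $\sigma:S^2_{c'}\setminus\{p_0\}\to\mathbb R^2_+$ sending $\partial S^2_{c'}\setminus\{p_0\}$ to $\partial\mathbb R^2_+$; this is possible because the boundary circle of $S^2_{c'}$ passes through the stereographic pole. In a local coordinate $y$ centered at $p_0$, $\sigma$ is approximated by a M\"obius inversion, so that $|\sigma(y)|\sim 1/|y|$ and the conformal factor $\lambda$ defined by $\sigma^* g_{\mathrm{eucl}}=\lambda^2 g_{S^2}$ satisfies $\ln\lambda(y)=-2\ln|y|+O(1)$ near $p_0$.

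Applying the conformal invariance of the super-Liouville boundary problem (Proposition \ref{prop-c}, suitably adapted to the conformal map $\sigma$ between distinct surfaces), I define $(\tilde u,\tilde\psi)$ on $S^2_{c'}\setminus\{p_0\}$ via the standard conformal transformation rule; $(\tilde u,\tilde\psi)$ then solves the super-Liouville boundary problem on $(S^2_{c'}\setminus\{p_0\},g_{S^2})$ with $K_g=1$ and boundary geodesic curvature $c'$, i.e.\ the system (\ref{b2}). Inserting the sharp expansions $u(x)=-2(1+\alpha)\ln|x|+C+O(|x|^{-1})$ from Proposition \ref{asy} and the spinor asymptotic (\ref{aypsi}) into this rule, together with $\ln|\sigma(y)|=-\ln|y|+O(1)$, I obtain
\begin{equation*}
\tilde u(y)=2\alpha\,\ln|y|+O(1),\qquad |\tilde\psi(y)|=O(1)\quad\text{as }y\to 0,
\end{equation*}
so that $p_0$ is at worst a boundary conical singularity of order $\alpha$ for $(\tilde u,\tilde\psi)$, precisely the behavior accommodated by the conical weight $|x|^{2\alpha}$ in (\ref{b2}).

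The remaining task, and what I expect to be the main obstacle, is to rule out a genuine singularity at $p_0$. I would invoke the boundary removability criterion of Theorem \ref{thm-sigu-move1-B}: removability at $p_0$ is equivalent to the vanishing of the Pohozaev constant $C_B(\tilde u,\tilde\psi)$. Pulled back through $\sigma$, this constant equals the limit as $R\to\infty$ of the boundary terms in the Pohozaev identity (Proposition \ref{prop-poho}) for $(u,\psi)$ on the half-disk $D_R^+$. The key point is that the sharp expansions of Proposition \ref{asy}, together with the mass identity $d=2\pi(1+\alpha)$ (which was derived precisely as the Pohozaev balance at infinity in Step 2 of the proof of Proposition \ref{asy}), force all leading-order boundary contributions to cancel: the term $R\int_{S_R^+}(|\partial u/\partial\nu|^2-\tfrac{1}{2}|\nabla u|^2)d\sigma$ converges to $-d^2/(2\pi)=-2\pi(1+\alpha)^2$, matching the bulk contribution $(1+\alpha)d=2\pi(1+\alpha)^2$, while the spinor and $|x|^{2\alpha}e^{2u}$ boundary integrals vanish by the decay estimates $e^{2u}=O(|x|^{-4(1+\alpha)})$, $\psi=O(|x|^{-1})$, and (\ref{asy-psi1})-(\ref{asy-psi2}). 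Hence $C_B(\tilde u,\tilde\psi)=0$, and Theorem \ref{thm-sigu-move1-B} extends $(\tilde u,\tilde\psi)$ to a regular solution of (\ref{b2}) on all of $S^2_{c'}$.
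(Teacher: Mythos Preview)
Your strategy is broadly aligned with the paper's, but it is considerably more elaborate than needed and contains a genuine gap in how the conical structure is carried through the conformal map.

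The paper's proof is essentially a two-line argument once Proposition~\ref{asy} is in hand. Instead of projecting to a cap and invoking the Pohozaev machinery, the paper uses the Kelvin transform $v(x)=u(x/|x|^2)-2(1+\alpha)\ln|x|$, $\phi(x)=|x|^{-1}\psi(x/|x|^2)$, which sends $\infty$ to $0$ and---this is the key algebraic fact---preserves the \emph{exact} form of the system, with the same weight $|x|^{2\alpha}$ at the new origin (see \eqref{sek}). Because Proposition~\ref{asy} gives $d=2\pi(1+\alpha)$, the expansion \eqref{ayu} yields
\[
v(x)=\Bigl(\tfrac{d}{\pi}-2(1+\alpha)\Bigr)\ln|x|+O(1)=O(1)\quad\text{near }0,
\]
and similarly $\phi$ is bounded near $0$. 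Boundedness plus elliptic regularity removes the singularity at once; Theorem~\ref{thm-sigu-move1-B} is never invoked here.

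Your route through Theorem~\ref{thm-sigu-move1-B} has a real problem. After stereographic projection from a boundary point $p_0$, the conical point of the system (the image of $0\in\mathbb R^2_+$) lands at some $q_0\neq p_0$, so in the local coordinate $y$ at $p_0$ the pulled-back weight $|\sigma(y)|^{2\alpha}$ behaves like $|y|^{-2\alpha}$, not $|y|^{+2\alpha}$. Consequently the local problem at $p_0$ is \emph{not} of the form \eqref{Eq-B} with nonnegative order, and Theorem~\ref{thm-sigu-move1-B} does not apply as stated. Your expansion $\tilde u(y)=2\alpha\ln|y|+O(1)$ is therefore not ``accommodated by the conical weight $|x|^{2\alpha}$ in \eqref{b2}''; it reflects a mismatch between where you placed the divisor on the cap and where the transformed equation actually carries its singular weight. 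If you repair this by absorbing the $2\alpha\ln|y|$ into the weight, you are led exactly to the Kelvin picture above, at which point the transformed function is already bounded and the Pohozaev criterion becomes superfluous. (There is also a sign slip in your last paragraph: $R\int_{S_R^+}(|\partial_\nu u|^2-\tfrac12|\nabla u|^2)\to +d^2/(2\pi)$, not $-d^2/(2\pi)$; see the computation in Step~2 of Proposition~\ref{asy}.)
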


\begin{proof}
Let $(v,\phi)$ be the Kelvin transformation of $(u,\psi)$ as before. Then
$(v,\phi)$ satisfies the system (\ref{sek}). To prove the theorem, by conformal invariance, it is
sufficient to show that $(v,\phi)$ is regular on $\overline{\R}_+^2$. Applying Proposition \ref{asy}, we get
\begin{equation}
v(x)=(\frac {d}{\pi}-2(1+\alpha)) \ln{|x|}+O(1) \qquad \text{for}\quad
|x| \quad \text{near}\quad 0.
\end{equation}
Since $\alpha =2\pi(1+\alpha)$, it follows that $v$ is bounded near the singularity $0$. Recall that $\phi$ is also bounded near $0$, we can apply elliptic theory to obtain that $(v,\phi)$ is regular on $\overline{\R}_+^2$.  \end{proof}

\

\section{Energy Identity for Spinors}

The energy identity for spinor part of solutions to the super-Liouville equations on closed Riemann
surfaces was derived in \cite{JWZZ1, JZZ3}. In this section, we shall
prove an analogue for the singular super-Liouville boundary problem, i.e. Theorem
\ref{engy-indt}. For harmonic maps in dimension two and J-holomorphic
curves as well as for solutions of certain nonlinear Dirac type equations, similar results are derived in \cite{DT, PW, Ye, Z2} and the references therein.

To prove Theorem \ref{engy-indt}, we shall derive the local estimate for the spinor part on an upper half annulus. Since we can extend $(u,\Psi)$ to the lower half disk $D_r^-$ by the chirality boundary condition of $\Psi$,  the proof of this local estimate can be established by using the result  of Lemma 3.4 of \cite{JWZZ1}. Here we just state the Lemma and omit the proof.

\begin{lm}\label{main-lamm}
Let $(u,\Psi)$ satisfies (\ref{Eq-B}) and
 $$\int_{D^+_r}|x|^{2\alpha}e^{2u}+|\Psi|^4dx+\int_{L_r}|x|^{\alpha}e^{u}ds<C.$$
For $0<r_1<2r_1<\frac {r_2}2<r_2<r$, consider the annulus $A_{r_1,r_2}=\{x\in \R^2|r_1\leq
|x|\leq r_2\}$ and the upper half annulus $A^+_{r_1,r_2}=A_{r_1,r_2}\cap \R_+^2$. Then we have
\begin{eqnarray}\label{inqu}
&&(\int_{A^+_{2r_1,\frac {r_2}2}}|D\Psi|^{\frac 43})^{\frac
34}+(\int_{A^+_{2r_1,\frac {r_2}2}}|\Psi|^4)^{\frac 14}\\
&\leq & C_0 (\int_{A^+_{r_1,r_2}}|x|^{2\alpha}e^{2u})^{\frac
12}(\int_{A^+_{r_1,r_2}}|\Psi|^4)^{\frac
14}+C(\int_{A^+_{r_1,2r_1}}|\Psi|^4)^{\frac 14}+C(\int_{A^+_{\frac
{r_2}2, r_2}}|\Psi|^4)^{\frac 14}\nonumber
\end{eqnarray} for a positive constant $C_0$ and some universal
positive constant $C$.
\end{lm}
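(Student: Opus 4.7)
\medskip

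\noindent\textbf{Proof proposal for Lemma \ref{main-lamm}.} The plan is to reduce this half-annulus estimate to its interior analogue (Lemma 3.4 of \cite{JWZZ1}) by a chirality reflection across $L_r$, exactly as was already used in Section 3 (see the discussion after Remark \ref{rem3.4}) and in the proof of Proposition \ref{prop-poho}. Concretely, given the solution $(u,\Psi)$ of \eqref{Eq-B} on $D_r^+$, extend it to the full disk $D_r$ by setting
\begin{equation*}
u(\bar x) := u(x), \qquad \Psi(\bar x) := i e_1 \cdot \Psi(x), \qquad \bar x \in D_r^-,
\end{equation*}
where $\bar x$ is the reflection of $x$ about $\partial \R^2_+$. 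The chirality boundary condition $B\Psi = 0$ on $L_r$ ensures that the reflected spinor matches smoothly across $L_r$, and a direct computation (carried out in the proofs of Proposition \ref{prop-poho} and Remark \ref{rem3.4}) shows that the extended $\Psi$ satisfies the Dirac equation $\slashiii{D}\Psi = -A(x)\Psi$ on all of $D_r$, where
\begin{equation*}
A(x) = \begin{cases} V(x)|x|^\alpha e^{u(x)}, & x \in D_r^+, \\ V(\bar x)|\bar x|^\alpha e^{u(\bar x)}, & x \in D_r^-. \end{cases}
\end{equation*}

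With the reflection in hand, the pointwise quantities $|x|^{2\alpha}e^{2u}$, $|\Psi|^4$ and $|D\Psi|^{4/3}$ are all symmetric about $\partial \R^2_+$, so for any sub-annulus the full-annulus integral is exactly twice the half-annulus integral. I would then apply the interior annular estimate of Lemma 3.4 in \cite{JWZZ1} to the extended pair on the full annulus $A_{r_1,r_2}$, giving
\begin{equation*}
\Bigl(\int_{A_{2r_1,r_2/2}}\!|D\Psi|^{4/3}\Bigr)^{3/4} + \Bigl(\int_{A_{2r_1,r_2/2}}\!|\Psi|^4\Bigr)^{1/4} \le C_0\Bigl(\int_{A_{r_1,r_2}}\!|x|^{2\alpha}e^{2u}\Bigr)^{1/2}\Bigl(\int_{A_{r_1,r_2}}\!|\Psi|^4\Bigr)^{1/4} + C \cdot (\text{boundary annular terms}),
\end{equation*}
and then read the inequality back on the upper half-annulus, absorbing the factors of $2$ into the constants $C_0$ and $C$. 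The boundary annular terms $\int_{A_{r_1,2r_1}}|\Psi|^4$ and $\int_{A_{r_2/2,r_2}}|\Psi|^4$ likewise halve to their upper-half counterparts.

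The only technical point to verify — and the main obstacle — is that the reflected pair is regular enough across $L_r$ to legitimately invoke the interior lemma on the full annulus. This is why the chirality projection is set up exactly as in Section 2: under $B\Psi|_{L_r} = 0$ the Clifford multiplication $ie_1\cdot$ matches the two half-spinors along the boundary, and the reflected $\slashiii{D}\Psi = -A(x)\Psi$ holds weakly across $L_r$. Elliptic regularity for the Dirac system then upgrades this to the required classical regularity (as established in Proposition \ref{prop-b}), so the hypotheses of Lemma 3.4 of \cite{JWZZ1} are satisfied and the estimate transfers directly. No further calculation is needed beyond the reflection setup and bookkeeping of constants, which is why the paper omits the proof.
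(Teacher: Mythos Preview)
Your proposal is correct and follows exactly the approach the paper indicates: extend $(u,\Psi)$ to the full disk via the chirality reflection \eqref{reflec1}--\eqref{reflec2}, so that the extended spinor satisfies $\slashiii{D}\Psi=-A(x)\Psi$ on $D_r$, and then apply the interior annular estimate Lemma~3.4 of \cite{JWZZ1} on the full annulus, reading the result back on the upper half by symmetry. This is precisely what the paper sketches in the paragraph preceding the lemma, which is why it omits the details.
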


\
\

\noindent{\bf Proof of Theorem \ref{engy-indt}}.  We will follow
closely the argument for the energy identity of harmonic maps, see
\cite{DT}, or for super-Liouville equations, see
\cite{JWZZ1, JZZ1, JZZ3}.  Since the blow-up set $\Sigma_1$ is finite, we can
find small disk $D^+_{\delta_i}(x_i)$, which is centered at each blow-up point
$x_i$, such that $D^+_{\delta_i}(x_i)\cap D^+_{\delta_j}(x_j)=\emptyset$ for
$i\neq j, i,j=1,2,\cdots, P$, and on $(D^+_r\cup \L_r)\backslash
\bigcup_{i=1}^{P}(D^+_{\delta_i}(x_i)\cup L_{\delta_i}(x_i))$, $\Psi_n$  converges strongly to
$\Psi$ in $L^4$. So, we need to prove that there are
$(u^{i,k},\xi^{i,k})$, which are solutions of (\ref{b1}),
$i=1,2,\cdots , I; k=1,2,\cdots, K_i$, such that
\begin{equation}\label{e1}
\lim_{\delta_i\rightarrow 0}\lim_{n\rightarrow
\infty}\int_{D^+_{\delta_i}(x_i)}|\Psi_n|^4dv=\sum_{k=1}^{L_i}\int_{S^2}|\xi^{i,k}|^4dv,
\text { for } i=1,2,\cdots, I;
\end{equation}
or, we need to prove that there are $(u^{j,l},\xi^{j,l})$, which are
solutions of (\ref{b2}), $j=1,2,\cdots , J; l=1,2,\cdots, L_j$, such
that
\begin{equation}\label{e2}
\lim_{\delta_j\rightarrow 0}\lim_{n\rightarrow
\infty}\int_{D^+_{\delta_j}(x_i)}|\Psi_n|^4dv=\sum_{l=1}^{L_j}\int_{S^2_{c'}}|\xi^{j,l}|^4dv,
\text { for } j=1,2,\cdots, J;
\end{equation}

\
\

When $p\in (D^+_r)^o$, from \cite{JZZ3}, we know that (\ref{e1})
holds. So, without loss of generality, we assume that $p\in
L_r$ and there is only one bubble at each blow-up point $p$. Furthermore, we may assume that $p=0$. The case of  $p\neq 0$ can be handled in an analogous way and in fact this case is simpler, as $|x|^{\alpha}$ is a smooth function near $p$.   Then what we
need to prove is that there exists a bubble $(u,\xi)$ as (\ref{b1}),
such that
\begin{equation}\label{2.4}
\lim_{\delta\rightarrow 0}\lim_{n\rightarrow
\infty}\int_{D^+_{\delta}}|\Psi_n|^4dv=\int_{S^2}|\xi|^4dv,
\end{equation}
or there exists a bubble $(u, \xi)$ as (\ref{b2}) such that  such
that
\begin{equation}\label{2.41}
\lim_{\delta\rightarrow 0}\lim_{n\rightarrow
\infty}\int_{D^+_{\delta}}|\Psi_n|^4dv=\int_{S^2_{c'}}|\xi|^4dv.
\end{equation}

\
\

Next we rescale  functions $(u_n,\Psi_n)$ at the blow-up point $p=0$ and then try to get the bubble of $(u_n,\Psi_n)$. To this purpose, we let $x_n\in \overline{D}^+_{\delta}$ such that
$u_n(x_n)=\max_{\overline{D}^+_{\delta}}u_n(x)$. Write $x_n=(s_n,t_n)$. It is clear that $x_n\rightarrow p$ and
$u_n(x_n)\rightarrow +\infty$. Define  $\lambda_n=e^{-\frac{u_n(x_n)}{\alpha+1}}$. We know $\lambda_n$, $|x_n|$ and $t_n$ converge to $0$ as $n\rightarrow 0$, but their rates of converging to $0$ may be different.  Next we will distinguish three cases.

\
\

\noindent{\bf Case I.} $\frac{|x_n|}{\lambda_n}=O(1)$ as $n\rightarrow +\infty$.

In this case,
we define the rescaling functions
\begin{equation*}
\left\{
\begin{array}{rcl}
\widetilde{u}_n(x)&=&u_n(\lambda_nx)+(1+\alpha)\ln {\lambda_n}\\
\widetilde{\Psi}_n(x)&=&\lambda^{\frac
12}_n\Psi_n(\lambda_nx)
\end{array}
\right.
\end{equation*}
for any $x \in \overline {D}^+_{\frac{\delta}{2\lambda_n}
}$. Then $(\widetilde{u}_n(x),\widetilde{\Psi}_n(x))$ satisfies
\begin{equation*}
\left\{
\begin{array}{rcll}
-\triangle \widetilde{u}_n(x)&=& 2V^2(\lambda_n x)|x|^{2\alpha}e^{2\widetilde{u}_n(x)}-V(\lambda_n x)|x|^\alpha
e^{\widetilde{u}_n(x)}|\widetilde{\Psi}_n(x)|^2, & \text{ in } D^+_{\frac{\delta}{2\lambda_n}
},\\
\slashiii
{D}\widetilde{\Psi}_n(x)&=&-V(\lambda_n x)|x|^{\alpha}e^{\widetilde{u}_n(x)}\widetilde{\Psi}_n(x),
& \text{ in } D^+_{\frac{\delta}{2\lambda_n}
},\\
\frac{\partial \widetilde{u}_n(x)}{\partial
n}&=&cV(\lambda_n x)|x|^\alpha e^{\widetilde{u}_n(x)}, & \text{ on } L_{\frac{\delta}{2\lambda_n}
},\\
B\widetilde{\Psi}_n(x)&=& 0, & \text{ on }
L_{\frac{\delta}{2\lambda_n}
},
\end{array}
\right.
\end{equation*}
with the energy condition
$$
\int_{D^+_{\frac{\delta}{2\lambda_n}
}}|x|^{2\alpha}e^{2\widetilde{u}_n(x)}+|\widetilde{\Psi}_n(x)|^4dv+\int_{L_{\frac{\delta}{2\lambda_n}
}}|x|^\alpha e^{\widetilde{u}_n(x)}d\sigma
<C.
$$
We know that
$$ \max_{\bar{D}^+_{\frac{\delta}{2\lambda_n}}}\widetilde{u}_n(x)=
\widetilde{u}_n(\frac{x_n}{\lambda_n})=u_n(x_n)+(\alpha_n+1)\ln \lambda_n=0.$$
Notice that the maximum point of $\widetilde {u}_n(x)$, i.e. $\frac{x_n}{\lambda_n}$, is bounded, namely $|\frac{x_n}{t_n}|\leq C$. So by taking a subsequence, we can assume that
$\frac{x_n}{t_n}\rightarrow x_0\in \bar{\R}^2_+$  with $|x_0|\leq C$. Therefore it follows from Theorem \ref{mainthm} that, by passing to a subsequence,  $(\widetilde{u}_n,\widetilde{\Psi}_n)$ converges in  $C^{2}_{loc}(\R^2_+)\cap C^{1}_{loc}(\bar{\R}^2_+)\times C^{2}_{loc}(\Gamma(\Sigma {\R}^2_+))\cap C^{1}_{loc}(\Gamma(\Sigma {\bar{\R}}^2_+))$ to some $(\widetilde
u,\widetilde \Psi)$ satisfying
\begin{equation}
\left\{
\begin{array}{rcll}
-\Delta \widetilde{u} &=& 2V^2(0)|x|^{2\alpha}e^{2\widetilde{u}}-V(0)|x|^{\alpha}e^{\widetilde u}|
\widetilde \Psi|^2, &\quad \text { in } \R^2_+,\\
\slashiii{D}\widetilde{\Psi} &=&
-V(0)|x|^{\alpha}e^{\widetilde{u}}\widetilde{\Psi},  &\quad \text { in } \R^2_+,\\
\frac{\partial \widetilde{u}}{\partial
n}&=&cV(0)|x|^\alpha e^{\widetilde{u}}, & \quad \text{ on }  \partial \R^2_+,\\
B\widetilde{\Psi}&=& 0, & \quad \text{ on } \partial \R^2_+
\end{array}
\right.   \label{eq-13-1}
\end{equation}
with the energy condition
$\int_{\R^2_+}(|x|^{2\alpha}e^{2\widetilde{u}}+|\widetilde{\Psi}|^4)dx +\int_{\partial \R^2_+}
|x|^\alpha e^{\widetilde{u}}d\sigma< \infty$.  By Proposition \ref{asy}, there holds
\[\int_{\R^2_+}(2V^2(0)|x|^{2\alpha}e^{2\widetilde{u}}-V(0)|x|^{\alpha}e^{\widetilde {u}}|\widetilde{\Psi}|^2)dx+\int_{\partial \R^2_+}
cV(0)|x|^\alpha e^{\widetilde{u}}d\sigma=2\pi(1+\alpha).\]
By the removability of a global singularity (Theorem \ref{rgs}), we get a bubbling solution  on $S^2_{c'}$.

\
\

\noindent{\bf Case II.} $\frac{|x_n|}{\lambda_n}\rightarrow +\infty$ as $n\rightarrow +\infty$.

In this case, we must have
\begin{equation}\label{ttt}
\overline{u}_n(y_n):=u_n(x_n)+(\alpha+1)\ln|x_n|=(\alpha+1)\ln|x_n|-(\alpha+1)\ln\lambda_n\rightarrow +\infty.
\end{equation}
Therefore we can rescale twice to get the bubble. First, we defince the rescaling functions
\begin{equation*}
\left\{
\begin{array}{rcl}
\overline{u}_n(x)&=&u_n(|x_n|x)+(\alpha+1)\ln|x_n|\\
\overline{\Psi}_n(x)&=&|x_n|^{\frac{1}{2}}\Psi_n(|x_n|x)
\end{array}
\right.
\end{equation*}
for any $x\in \overline{D}^+_{\frac{\delta}{2|x_n|}}$. Then $(\overline{u}_n(x),\overline{\Psi}_n(x))$ satisfies that
\begin{equation*}
\left\{
\begin{array}{rcll}
-\triangle \overline{u}_n(x)&=& 2V^2(|x_n|x)|x|^{2\alpha}e^{2\overline{u}_n(x)}-V(|x_n|x)|x|^\alpha e^{\overline{u}_n(x)}|\overline{\Psi}_n(x)|^2, & \text{ in } D^+_{\frac{\delta}{2|x_n|}},\\
\slashiii{D}\overline{\Psi}_n(x)&=&-V(|x_n|x)|x|^\alpha e^{\overline{u}_n(x)}\overline{\Psi}_n(x),
& \text{ in } D^+_{\frac{\delta}{2|x_n|}},\\
\frac{\partial \overline{u}_n(x)}{\partial
\nu}&=&cV(|x_n|x)|x|^\alpha e^{\overline{u}_n(x)}, & \text{ on } L_{\frac{\delta}{2|x_n|}},\\
B\overline{\Psi}_n(x)&=& 0, & \text{ on } L_{\frac{\delta}{2|x_n|}}.
\end{array}
\right.
\end{equation*}
Set that $y_n=\frac{x_n}{|x_n|}$. We assume that $y_0=\lim_{n\rightarrow \infty}\frac{x_n}{|x_n|}$. By (\ref{ttt}), we know $y_0$ is a blow-up point of $(\overline{u}_n, \overline{\Psi}_n)$. We can set $\delta_n=e^{-\overline{u}_n(y_n)}$, and $\rho_n=\frac{e^{-u_n(x_n)}}{|x_n|^\alpha}=\lambda_n(\frac{\lambda_n}{|x_n|})^\alpha $. It is clear that $\delta_n\rightarrow 0$, $\rho_n\rightarrow 0$ and $\frac{|x_n|}{\rho_n}\rightarrow +\infty$ as $n\rightarrow \infty$.
We define the rescaling functions
\begin{equation*}
\left\{
\begin{array}{rcl}
\widetilde{u}_n(x)&=&\overline{u}_n(\delta_nx+y_n)+\ln\delta_n=u_n(x_n+\rho_nx)-u_n(x_n)\\
\widetilde{\Psi}_n(x)&=&=\delta_n^{\frac 12}\overline{\Psi}_n(\delta_nx+y_n)=\rho_n^{\frac
12}\Psi_n(x_n+\rho_n x)
\end{array}
\right.
\end{equation*}
for any $x$ such that $y_n+\delta_n x \in \overline{D}^+_{R}(y_n)$ with any $R>1$.  By a direct computation, we have
 $$\Omega_n=\{x\in \R^2 | y_n+\delta_n x \in \overline{D}^+_{R}(y_n)\}=\{x\in \R^2 | x_n+\rho_n x \in \overline{D}^+_{R|x_n|}(x_n)\}.$$
We set $L_n=\partial \Omega_n \cap \{x\in \R^2 | t= -\frac {t_n}{\rho_n} \}$. Then $(\widetilde{u}_n(x),\widetilde{\Psi}_n(x))$ satisfies
\begin{equation*}
\left\{
\begin{array}{rcll}
-\triangle \widetilde{u}_n(x)&=& 2V^2(x_n+\rho_nx)|\frac {x_n}{|x_n|}+\frac{\rho_n}{|x_n|}x|^{2\alpha}e^{2\widetilde{u}_n(x)}\\
& & -V(x_n+\rho_nx)|\frac {x_n}{|x_n|}+\frac{\rho_n}{|x_n|}x|^\alpha
e^{\widetilde{u}_n(x)}|\widetilde{\Psi}_n(x)|^2, & \text{ in } \Omega_n
,\\
\slashiii
{D}\widetilde{\Psi}_n(x)&=&-V(x_n+\rho_nx)|\frac {x_n}{|x_n|}+\frac{\rho_n}{|x_n|}x|^\alpha e^{\widetilde{u}_n(x)}\widetilde{\Psi}_n(x),
& \text{ in } \Omega_n
,\\
\frac{\partial \widetilde{u}_n(x)}{\partial
n}&=&cV(x_n+\rho_nx)|\frac {x_n}{|x_n|}+\frac{\rho_n}{|x_n|}x|^\alpha e^{\widetilde{u}_n(x)}, & \text{ on } L_n
,\\
B\widetilde{\Psi}_n(x)&=& 0, & \text{ on } L_n,
\end{array}
\right.
\end{equation*}
with the energy condition
$$
\int_{\Omega_n}|\frac {x_n}{|x_n|}+\frac{\rho_n}{|x_n|}x|^{2\alpha}e^{2\widetilde{u}_n(x)}+|\widetilde{\Psi}_n(x)|^4dv+\int_{L_n}|\frac {x_n}{|x_n|}+\frac{\rho_n}{|x_n|}x|^\alpha e^{\widetilde{u}_n(x)}d\sigma
<C.
$$
It is clear that
$$ \widetilde{u}_n(x)\leq\max_{\Omega_n}\widetilde{u}_n(x)=
\widetilde{u}_n(0)=0 . $$

Now we proceed by distinguishing two subcases.

\

\noindent{\bf Case II.1}   $\frac {t_n}{\rho_n}\rightarrow +\infty$ as $n\rightarrow \infty$.

\

Notice that $|\frac {x_n}{|x_n|}+\frac{\rho_n}{|x_n|}x|\rightarrow 1 $ as $n\rightarrow \infty$ in $C^0_{loc}(\R^2)$. It follows from Theorem \ref{mainthm} that, by passing to a subsequence,  $(\widetilde{u}_n,\widetilde{\Psi}_n)$ converges in  $C^{2}_{loc}(\R^2)\times C^{2}_{loc}(\Gamma(\Sigma {\R}^2))$ to some $(\widetilde
u,\widetilde \Psi)$ satisfying
\begin{equation}
\left\{
\begin{array}{rcll}
-\Delta \widetilde{u} &=& 2V^2(0)e^{2\widetilde{u}}-V(0)e^{\widetilde u}|
\widetilde \Psi|^2, &\quad \text { in } \R^2,\\
\slashiii{D}\widetilde{\Psi} &=&
-V(0)e^{\widetilde{u}}\widetilde{\Psi},  &\quad \text { in } \R^2,\\
\end{array}
\right.   \label{eq-13-2}
\end{equation}
with the energy condition
$\int_{\R^2}e^{2\widetilde{u}}+|\widetilde{\Psi}|^4dx <\infty$.  By Proposition 6.4 in \cite{JWZ1}, there holds
\[\int_{\R^2}(2V^2(0)e^{2\widetilde{u}}-V(0)e^{\widetilde {u}}|\widetilde{\Psi}|^2)dx=4\pi.\]
By the removability of a global singularity (Theorem 6.5 in \cite{JWZ1}), we get a bubbling solution  on $S^2$.

\

\noindent{\bf Case II.2}   $\frac {t_n}{\rho_n}\rightarrow \Lambda$ as $n\rightarrow \infty$.

\

Simiar in the Case II.1, we have from Theorem \ref{mainthm} that, by passing to a subsequence,  $(\widetilde{u}_n,\widetilde{\Psi}_n)$ converges in  $C^{2}_{loc}(\R^2_{-\Lambda})\cap C^{1}_{loc}(\bar{\R}^2_{-\Lambda})\times C^{2}_{loc}(\Gamma(\Sigma {\R}^2_{-\Lambda}))\cap C^{1}_{loc}(\Sigma\bar{\R}^2_{-\Lambda})$ to some $(\widetilde
u,\widetilde \Psi)$ satisfying
\begin{equation}
\left\{
\begin{array}{rcll}
-\Delta \widetilde{u} &=& 2V^2(0)e^{2\widetilde{u}}-V(0)e^{\widetilde u}|
\widetilde \Psi|^2, &\quad \text { in } \R^2_{-\Lambda},\\
\slashiii{D}\widetilde{\Psi} &=&
-V(0)e^{\widetilde{u}}\widetilde{\Psi},  &\quad \text { in } \R^2_{-\Lambda},\\
\frac{\partial \widetilde{u}}{\partial
n}&=&cV(0) e^{\widetilde{u}}, &\quad  \text{ on } \partial \R^2_{-\Lambda}
,\\
B\widetilde{\Psi} &=& 0, & \quad \text{ on } \partial \R^2_{-\Lambda},
\end{array}
\right.   \label{eq-13-3}
\end{equation}
with the energy condition
$\int_{\R^2_{-\Lambda}}e^{2\widetilde{u}}+|\widetilde{\Psi}|^4dx+\int_{\partial \R^2_{-\Lambda}}e^{\widetilde{u}}d\sigma <\infty$.  By Proposition 6.4 in \cite{JWZ1}, there holds
\[\int_{\R^2_{-\Lambda}}(2V^2(0)e^{2\widetilde{u}}-V(0)e^{\widetilde {u}}|\widetilde{\Psi}|^2)dx+\int_{\partial \R^2_{-\Lambda}}cV(0)e^{\widetilde{u}}d\sigma=2\pi.\]
By the removability of a global singularity (Theorem 6.5 in \cite{JZZ1}), we get a bubbling solution  on $S^2_{c'}$.

\
\

It is well know, in order to prove (\ref{2.4}) or (\ref{2.41}),  we need to prove that there is no any energy of $\Psi_n$ in the neck domain, i.e.
\begin{equation}\label{nev}
\lim_{\delta\rightarrow 0}\lim_{R\rightarrow +\infty}\lim_{n\rightarrow
\infty}\int_{A^+_{\delta,R,n}}|\Psi_n|^4dv=0,
\end{equation}
where $A^+_{\delta,R,n}$ is the neck domain which is defined latter. To this purpose, we shall proceed separately for Case I, Case II.1 and Case II.2.

\
\

For {\bf Case I}, we define the neck domain is
$$ A^+_{\delta, R, n}=\{x\in \R^2_+|\lambda_n R\leq |x|\leq
\delta\}.$$

We have two claims.

\

\noindent{\bf Claim 1} For any
$\varepsilon>0$, there is an $N>1$ such that for any $n\geq N$, we
have
$$
\int_{D^+_r\setminus
D^+_{e^{-1}r}}(|x|^{2\alpha}e^{2u_n}+|\Psi_n|^4)+\int_{\partial
(D^+_r\setminus D^+_{e^{-1}r})\cap
\partial \R^2_+}|x|^{\alpha}e^{u_n}<\varepsilon; \quad
\forall r\in [e\lambda_n R, \delta].
$$

\

To prove this claim, we note  two facts. The first fact is: for any
$T>0$, there exists some $N(T)$ such that for any $n\geq N(T)$, we
have
\begin{equation}\label{2.6}
\int_{D^+_\delta \setminus D^+_{\delta
e^{-T}}}(|x|^{2\alpha}e^{2u_n}+|\Psi_n|^4)+\int_{\partial( D^+_\delta
\setminus D^+_{\delta e^{-T}})\cap
\partial \R^2_+}|x|^{\alpha}e^{u_n}<\varepsilon.
\end{equation}
Actually, since $(u_n,\Psi_n)$ has no blow-up point in
$\overline{D}^+_{\delta}\backslash \{p\}$, then $|\Psi_n|$ is
uniformly bounded in $\overline{D^+_\delta\backslash D^+_{\delta
e^{-T}}}$ , and $u_n$ will either be uniformly bounded in
$\overline{D^+_\delta\backslash D^+_{\delta e^{-T}}}$ or uniformly
tend to $-\infty$ in $\overline{D^+_\delta\backslash D^+_{\delta
e^{-T}}}$. So if $u_n$ uniformly tends to $-\infty$ in
$\overline{D_\delta\backslash D_{\delta e^{-T}}}$, it is clear that,
for any given $T>0$, we have an $N(T)$ big enough such that when
$n\geq N(T)$
$$
\int_{D^+_\delta\backslash D^+_{\delta
e^{-T}}}(|x|^{2\alpha}e^{2u_n}+\int_{\partial (D^+_\delta\backslash
D^+_{\delta e^{-T}})\cap \partial
\R^2_+}|x|^{\alpha}e^{u_n}<\frac{\varepsilon}{2}.
$$
Moreover, since $\Psi_n$ converges to $\Psi$ in
$L_{loc}^4((D^+_r\cap L_r)\setminus \Sigma_1)$ and hence
$$
\int_{D^+_\delta\backslash D^+_{\delta
e^{-T}}}|\Psi_n|^{4} \rightarrow \int_{D^+_\delta\backslash
D^+_{\delta e^{-T}}}|\Psi|^{4}.
$$
For any small $\varepsilon>0$, we may choose $\delta>0$ small enough
such that  $\int_{D^+_\delta}|\Psi|^{4}<\frac \varepsilon 4$, then
for any given $T>0$, we have an $N(T)$ big enough such that when
$n\geq N(T)$
$$
\int_{D^+_\delta\backslash D^+_{\delta
e^{-T}}}|\Psi_n|^{4}<\frac{\varepsilon}{2}.
$$
Consequently, we get (\ref{2.6}).

\

If $(u_n, \Psi_n)$ is uniformly bounded in
$\overline{D^+_\delta\backslash D^+_{\delta e^{-T}}}$, then we know
$(u_n, \Psi_n)$ converges to a weak solution $(u,\Psi)$ strongly
on compact sets of $D^+_{\delta}\setminus\{p\}$. Therefore, we can
also  choose $\delta>0$ small enough such that, for any given $T>0$,
there exists an $N(T)$ big enough, when $n\geq N(T)$, (\ref{2.6})
holds.

\
\

The second fact is: For any small $\varepsilon>0$, and $T>0$, we may
choose an $N(T)$ such that when $n\geq N(T)$

\begin{eqnarray*}
& & \int_{D^+_{\lambda_nRe^T}\setminus
D^+_{\lambda_nR}}(|x|^{2\alpha}e^{2u_n}+|\Psi_n|^4)+\int_{\partial
(D^+_{\lambda_nRe^T}\setminus D^+_{\lambda_nR})\cap
\partial \R^2_+}|x|^{\alpha}e^{u_n}\\
&=& \int_{D^+_{Re^T}\setminus
D^+_{R}}(|x|^{2\alpha}e^{2\widetilde{u}_n}+|\widetilde{\Psi}_n|^4)+\int_{\partial
(D^+_{Re^T}\setminus D^+_{R})\cap
\partial \R^2_+}|x|^{\alpha}e^{\widetilde{u}_n}\\
& < & \varepsilon,
\end{eqnarray*}
if $R$ is big enough.

\
\

Now we can prove the claim. We argue  by contradiction by using the
above two facts.  If there exists $\varepsilon_0>0$ and a sequence
${r_n}$, $r_n\in [e\lambda_nR,\delta]$, such that
$$
\int_{D^+_{r_n}\setminus
D^+_{e^{-1}r_n}}(|x|^{2\alpha}e^{2u_n}+|\Psi_n|^{4})+\int_{\partial
(D^+_{r_n}\setminus D^+_{e^{-1}r_n})\cap
\partial \R^2_+}|x|^{\alpha}e^{u_n}\geq \varepsilon_0.
$$
Then, by the above two facts, we know that $\frac
{\delta}{r_n}\rightarrow +\infty$ and
$\frac{\lambda_nR}{r_n}\rightarrow 0$, in particular, $r_n\rightarrow
0$ as $n\rightarrow +\infty$. Rescaling again, we set
\begin{equation*}
\left\{
\begin{array}{rcl}
v_n(x)&=& u_n(r_nx)+(1+\alpha)\ln r_n, \\
\varphi_n(x)&=& r_n^{\frac 12}\Psi(r_nx)\\
\end{array}
\right.
\end{equation*}
for any  $x\in D^+_{\frac {\delta}{r_n}}\setminus
D^+_{\frac {\lambda_n R}{r_n}}$.

It is clear that
\begin{equation}\label{3.1}
\int_{D^+_1\setminus D^+_{e^{-1}}}(|x|^{2\alpha}e^{2v_n}+|\varphi_n|^4)+\int_{\partial
(D^+_1\setminus D^+_{e^{-1}})\cap
\partial \R^2_+}|x|^{\alpha}e^{v_n}\geq
\varepsilon_0.
\end{equation}

And  $(v_n,\varphi_n)$ satisfies for any $R>0$

\begin{equation*}
\left\{
\begin{array}{rcll}
-\triangle v_n(x) &=& 2V^2(r_nx)|x|^{2\alpha}e^{2v_n(x)}-V(r_nx)|x|^{\alpha}e^{v_n(x)}
|\varphi_n(x)|^2, &\quad \text{ in }
(D^+_{\frac{\delta}{r_n}}\setminus D^+_{\frac{\lambda_nR}{r_n}}), \\
\slashiii{D}\varphi_n(x) &=& -V(r_nx)|x|^{\alpha}e^{v_n(x)}\varphi_n(x), &
\quad \text{ in } (D^+_{\frac{\delta}{r_n}}\setminus D^+_{\frac{\lambda_nR}{r_n}}),\\
\frac{\partial v_n(x)}{\partial n}&=& cV(r_nx)|x|^{\alpha}e^{v_n(x)}, &\quad
\text{ on } \partial (D^+_{\frac{\delta}{r_n}}\setminus D^+_{\frac{\lambda_nR}{r_n}})\cap
\partial \R^2_+,\\
B\varphi_n(x)&=&0, &\quad \text{ on } \partial
(D^+_{\frac{\delta}{r_n}}\setminus D^+_{\frac{\lambda_nR}{r_n}})\cap
\partial \R^2_+.
\end{array}
\right.
\end{equation*}
According to Theorem \ref{mainthm}, there are three possible cases:

 (1). There exists some $q\in Q_n=(D^+_{\frac{\delta}{r_n}}\setminus
 D^+_{\frac{\lambda_nR}{r_n}})$ and energy
concentration occurs near the point $q$, namely along some subsequence we
have $$\lim_{n\rightarrow \infty} \int_{D_r(q)\cap Q_n
}(|x|^{2\alpha}e^{2v_n}+|\varphi_n|^{4})+\int_{ D_r(q)\cap
\partial Q_n\cap \{t=0\}}|x|^{\alpha}e^{v_n}\geq \varepsilon_0>0$$ for any small $r>0$. In such
a case, we still obtain the second ``bubble" by the rescaling
argument. Thus we get a contradiction.

 (2). For any $R>0$, there is no blow-up point in $D^+_R\setminus D^+_{\frac 1R}$ and $v_n\rightarrow -\infty$ uniformly in
$\overline{D^+_R\setminus D^+_{\frac 1R}}$. Then, it is clear that $\varphi_n$ converges
to a spinor $\varphi$ in $ L_{loc}^4(\overline{\R_+^2}\setminus\{0\})$
which satisfies
\begin{equation*}
\left\{
\begin{array}{rcll}
\slashiii{D}\varphi &=& 0, & \text { in } \R^2_+,\\
B \varphi &=& 0, &\text { on } \partial \R^2_+\setminus\{0\}.
\end{array}
\right.
\end{equation*}
 We translate $\varphi $ to be a harmonic spinor on $\R_+^2
 \setminus\{0\}$ satisfying the corresponding chirality boundary
 condition and then extend it as in \eqref{reflec2} to a harmonic
 spinor $\overline{\varphi}$ on $\R^2\setminus\{0\}$ with bounded
 energy, i.e., $||\overline{\varphi}||_{L^4(\R^2)} < \infty$. As discussed in \cite{JWZZ1}, $\overline{\varphi}$ conformally extends to a harmonic spinor on $S^2$. By the well known fact that
there is no nontrivial harmonic spinor on $S^2$, we have that
$\overline{\varphi} \equiv 0$ and hence $\varphi_n$ converges to
$0$ in $L_{loc}^4(\R^2_+\setminus\{0\})$. This will contradict (\ref{3.1})

(3). For any $R>0$, there is no blow-up point in $(D^+_R\setminus D^+_{\frac 1R})$  and $(v_n, \varphi_n)$ is uniformly bounded in
$(D^+_R\setminus D^+_{\frac 1R})$. In such a case $(v_n,
\varphi_n)$ will converge to $(v,\varphi)$ strongly on
$(D^+_R\setminus D^+_{\frac 1R})$ and $(v,\varphi)$ satisfying
\begin{equation*}
\left\{
\begin{array}{rcll}
-\triangle v &=& 2V^2(0)|x|^{2\alpha}e^{2v}-V(0)|x|^{\alpha}e^{v}|\varphi|^2, & \quad \text{ in }\R^2_+,\\
\slashiii{D}\varphi &=& -V(0)|x|^{\alpha}e^{v}\varphi, & \quad \text{ in
}\R^2_+,\\
\frac{\partial v}{\partial n}&=& cV(0)|x|^{\alpha}e^{v}, &\quad \text{ on }
 \partial \R^2_+\setminus\{0\},\\
B\varphi &=& 0, &\quad \text{ on }  \partial \R^2_+\setminus\{0\}
\end{array}
\right.
\end{equation*}
with finite energy. It is clear that $(v,\varphi)$ is
regular.

 Next we need to remove the singularities of $(v,\varphi)$ and then obtain the second bubble  of the system. Concequently we get a contradiction. To this purpose, let us use the Pohozaev identity of $(u_n, \Psi_n)$ in $D^+_\delta$, it follows for any $\rho$ with $r_n\rho<\delta$
\begin{eqnarray*}
&& r_n\rho\int_{ S^+_{r_n\rho}} |\frac {\partial u_n}{\partial
\nu}|^2-\frac
12|\nabla u_n|^2d\sigma \nonumber\\
&=& (1+\alpha)\int_{D^+_{r_n\rho}}2V^2(x)|x|^{2\alpha}e^{2u_n}-V(x)|x|^{\alpha}e^{u_n}|\Psi_n|^2dv+
(\alpha+1)\int_{L_{r_n\rho}}cV(x)|x|^{\alpha}e^{u_n}ds \nonumber\\
& & -r_n\rho\int_{S^+_{r_n\rho}}V^2(x)|x|^{2\alpha}e^{2u_n}d\sigma +\int_{ L_{r_n \rho}}c\frac {\partial V(s,0)}{\partial s}|s|^{\alpha}se^{u_n}ds-cV((s,0))|s|^{\alpha}se^{u_n}|^{s=r_n\rho}_{s=-r_n\rho} \nonumber\\
& & +\int_{D^+_{r_n\rho}}x\cdot \nabla (V^2(x))|x|^{2\alpha}e^{2u_n}dv -\int_{D^+_{r_n\rho}}x\cdot \nabla
V(x)|x|^{\alpha}e^{u_n}|\Psi_n|^2dv \nonumber\\
& & + \frac 14\int_{S^+_{r_n\rho}}\la\frac {\partial
\Psi_n}{\partial \nu},(x+\bar{x})\cdot\Psi_n\ra d\sigma+\frac
14\int_{ S^+_{r_n\rho}}\la (x+\bar{x})\cdot\Psi_n, \frac
{\partial \Psi_n}{\partial \nu}\ra d\sigma.
\end{eqnarray*}
Hence for rescaling functions $(v_n, \varphi_n)$ we have
\begin{eqnarray*}
&& \rho\int_{ S^+_{\rho}} |\frac {\partial v_n}{\partial
\nu}|^2-\frac
12|\nabla v_n|^2d\sigma \nonumber\\
&=& (1+\alpha)\int_{D^+_{\rho}}2V^2(r_nx)|x|^{2\alpha}e^{2v_n}-V(r_nx)|x|^{\alpha}e^{v_n}|\varphi_n|^2dv+
(\alpha+1)\int_{L_{\rho}}cV(r_nx)|x|^{\alpha}e^{v_n}ds \nonumber\\
& & -\rho\int_{S^+_{\rho}}V^2(r_nx)|x|^{2\alpha}e^{2v_n}d\sigma +\int_{ L_{\rho}}c\frac {\partial V((r_ns,0))}{\partial s}|s|^{\alpha}se^{v_n}ds-cV((r_ns,0))|s|^{\alpha}se^{v_n}|^{s=\rho}_{s=-\rho} \nonumber\\
& & +\int_{D^+_{\rho}}x\cdot (\nabla V^2)(r_nx)|x|^{2\alpha}e^{2v_n}dv -\int_{D^+_{\rho}}x\cdot (\nabla
V)(r_nx)|x|^{\alpha}e^{v_n}|\varphi_n|^2dv \nonumber\\
& & + \frac 14\int_{S^+_{\rho}}\la\frac {\partial
\varphi_n}{\partial \nu},(x+\bar{x})\cdot\varphi_n\ra d\sigma+\frac
14\int_{ S^+_{\rho}}\la (x+\bar{x})\cdot\varphi_n, \frac
{\partial \varphi_n}{\partial \nu}\ra d\sigma.
\end{eqnarray*}
This implies that the associated Pohozaev constant of $(v_n,\varphi_n)$ satisties
\begin{eqnarray*}
& & C(v_n, \varphi_n)  =  C(v_n, \varphi_n,\rho)\\
& = & \rho\int_{ S^+_{\rho}} |\frac {\partial v_n}{\partial
\nu}|^2-\frac
12|\nabla v_n|^2d\sigma \nonumber\\
& & -(1+\alpha)\int_{D^+_{\rho}}2V^2(r_nx)|x|^{2\alpha}e^{2v_n}-V(r_nx)|x|^{\alpha}e^{v_n}|\varphi_n|^2dv-
(\alpha+1)\int_{L_{\rho}}cV(r_nx)|x|^{\alpha}e^{v_n}ds \nonumber\\
& & +\rho\int_{S^+_{\rho}}V^2(r_nx)|x|^{2\alpha}e^{2v_n}d\sigma -\int_{ L_{\rho}}c\frac {\partial V((r_ns,0))}{\partial s}|s|^{\alpha}se^{v_n}ds+cV((r_ns,0))|s|^{\alpha}se^{v_n}|^{s=\rho}_{s=-\rho} \nonumber\\
& & -\int_{D^+_{\rho}}x\cdot (\nabla V^2)(r_nx)|x|^{2\alpha}e^{2v_n}dv +\int_{D^+_{\rho}}x\cdot (\nabla
V)(r_nx)|x|^{\alpha}e^{v_n}|\varphi_n|^2dv \nonumber\\
& & - \frac 14\int_{S^+_{\rho}}\la\frac {\partial
\varphi_n}{\partial \nu},(x+\bar{x})\cdot\varphi_n\ra d\sigma-\frac
14\int_{ S^+_{\rho}}\la (x+\bar{x})\cdot\varphi_n, \frac
{\partial \varphi_n}{\partial \nu}\ra d\sigma\\
& = & 0.
\end{eqnarray*}
Since, for any $\rho>0$, $\int_{D^+_\rho}|x|^{2\alpha}e^{2v_n}+|\varphi_n|^4dv+\int_{L_\rho}|x|^{\alpha}e^{v_n}ds<C$, it is easy to check that
$$
\lim_{\rho\rightarrow 0}\lim_{n\rightarrow \infty}\int_{D^+_{\rho}}x\cdot (\nabla V^2)(r_nx)|x|^{2\alpha}e^{2v_n}dv +\int_{D^+_{\rho}}x\cdot (\nabla
V)(r_nx)|x|^{\alpha}e^{v_n}|\varphi_n|^2dv=0,
$$
and
$$
\lim_{\rho\rightarrow 0}\lim_{n\rightarrow \infty}\int_{ L_{\rho}}c\frac {\partial V((r_ns,0))}{\partial s}|s|^{\alpha}se^{v_n}ds=0.
$$
This implies that
\begin{eqnarray*}
0 & = & \lim_{\rho\rightarrow 0}\lim_{n\rightarrow \infty} C(v_n, \varphi_n,\rho)\\
& = & \lim_{\rho\rightarrow 0}C(v,\varphi,\rho)-(1+\alpha)\lim_{r\rightarrow 0}\lim_{n\rightarrow \infty}\int_{D^+_{r}}2V^2(r_nx)|x|^{2\alpha}e^{2v_n}-V(r_nx)|x|^{\alpha}e^{v_n}|\varphi_n|^2dv\\
& & -(1+\alpha)\lim_{r\rightarrow 0}\lim_{n\rightarrow \infty}\int_{L_{r}}cV(r_nx)|x|^{\alpha}e^{v_n}ds\\
& = & C(v,\varphi)-(1+\alpha)\beta.
\end{eqnarray*}
Here
$$
\beta=\lim_{r\rightarrow 0}\lim_{n\rightarrow \infty}[\int_{D^+_{r}}2V^2(r_nx)|x|^{2\alpha}e^{2v_n}-V(r_nx)|x|^{\alpha}e^{v_n}|\varphi_n|^2dv+\int_{L_{r}}cV(r_nx)|x|^{\alpha}e^{v_n}ds],
$$
and $C(v,\varphi)=C(v,\varphi,\rho)$ is the Pohozaev constant of $(v,\varphi)$, i.e.
\begin{eqnarray*}
C(v, \varphi)  & = & \rho\int_{ S^+_{\rho}} |\frac {\partial v}{\partial
\nu}|^2-\frac
12|\nabla v|^2d\sigma \nonumber\\
& & -(1+\alpha)[\int_{D^+_{\rho}}2V^2(0)|x|^{2\alpha}e^{2v}-V(0)|x|^{\alpha}e^{v}|\varphi|^2dv+
\int_{L_{\rho}}cV(0)|x|^{\alpha}e^{v}ds ]\nonumber\\
& & +\rho\int_{S^+_{\rho}}V^2(0)|x|^{2\alpha}e^{2v}d\sigma +cV(0)|s|^{\alpha}se^{v}|^{s=\rho}_{s=-\rho} \nonumber\\
& & - \frac 14\int_{S^+_{\rho}}\la\frac {\partial
\varphi}{\partial \nu},(x+\bar{x})\cdot\varphi\ra d\sigma-\frac
14\int_{ S^+_{\rho}}\la (x+\bar{x})\cdot\varphi, \frac
{\partial \varphi}{\partial \nu}\ra d\sigma.\\
\end{eqnarray*}
On the other hand, we use the fact that $(v_n,\varphi_n)$ converges to $(v,\varphi)$ in $C^2_{loc}(\R^2_+)\cap C^1_{loc}(\overline{\R}^2_+\backslash \{0\})\times C^2_{loc}(\Gamma(\Sigma\R^2_+))\cap C^1_{loc}(\Gamma(\Sigma\overline{\R}^2_+\backslash \{0\}))$ again to get
\begin{eqnarray*}
& & \int_{D^+_{\rho}}2V^2(r_nx)|x|^{2\alpha}e^{2v_n}-V(r_nx)|x|^{\alpha}e^{v_n}|\varphi_n|^2dv+\int_{L_{\rho}}cV(r_nx)|x|^{\alpha}e^{v_n}ds\\
& \rightarrow &  \int_{D^+_{\rho}}2V^2(0)|x|^{2\alpha}e^{2v}-V(0)|x|^{\alpha}e^{v}|\varphi|^2dv+\int_{L_{\rho}}cV(0)|x|^{\alpha}e^{v}ds+\beta
\end{eqnarray*}
as $n\rightarrow \infty$. By using Green's representation formula for $u_n$ in $D^+_\rho$ and then take $n\rightarrow \infty$, we have
$$
v(x)=\frac{\beta}{\pi}\ln\frac{1}{|x|}+\phi(x)+\gamma(x),
$$  where
\begin{eqnarray*}
\phi(x)&=&\frac{1}{\pi}\int_{D_\rho^+}\ln\frac{1}{|x-y|}(2V^2(0)|y|^{2\alpha}e^{2v(y)}-V(0)|y|^{\alpha}e^{v(y)}|\varphi|^2(y))dy\\
& & +\frac{1}{\pi}\int_{L_r}\ln\frac{1}{|x-y|}(cV(0)|y|^{\alpha}e^{v(y)})dy,
\end{eqnarray*}
and
$$
\gamma(x)=\frac{1}{\pi}\int_{S^+_\rho}\ln\frac{1}{|x-y|}\frac{\partial v}{\partial \nu}+\frac{(x-y)\cdot\nu}{|x-y|^2}v(y)dy.
$$
It is clear that $\gamma(x)$ is in $C^1(\overline{D^+_\rho})$ and $\phi$ satisfies
\begin{equation*}
\left\{
\begin{array}{rcll}
-\triangle \phi &=& 2V^2(0)|x|^{2\alpha}e^{2v}-V(0)|x|^{\alpha}e^{v}|\varphi|^2, & \quad \text{ in }D_\rho^+,\\
\frac{\partial \phi}{\partial \nu}&=& cV(0)|x|^{\alpha}e^{v}, &\quad \text{ on }
 L_\rho.\\
\end{array}
\right.
\end{equation*}
By similar arguments as the proof of  Propostion \ref{thm-sigu-move1-B}, we can obtain that
$$
C(v,\varphi)=\frac{\beta^2}{2\pi},
$$
This implies that
$$
(1+\alpha)\beta=\frac{\beta^2}{2\pi}.
$$
Noticing that $\int_{D^+_\rho}|x|^{2\alpha}e^{2v}dx<\infty$, we have $\beta\leq (1+\alpha)\pi$. Therefore we obtain that $\beta=0$, i.e. $C(v,\varphi)=0$, and the singularity at $0$ of $(v,\varphi)$ is removed by Propostion \ref{thm-sigu-move1-B}. Forthermore, the singularity at $\infty$ of $(v,\varphi)$ is also removed by Theorem \ref{rgs}. Thus  we get another bubble on $S^2_{c'}$, and  we get a contradiction to the
assumption that $m=1$. Concequently we complete the proof of the claim 1.

\
\

\noindent {\bf Claim 2}  We can separate $A^+_{\delta, R, n}$ into
finitely many parts
$$
A^+_{\delta, R, n}=\bigcup_{k=1}^{N_k}A^+_k$$ such that on each part
$$
\int_{A^+_k}|x|^{2\alpha}e^{2u_n}\leq \frac 1{4\Lambda^2},\quad k=1,2,\cdots ,N_k.
$$
where  $N_k\leq N_0$ for $N_0$  is a uniform integer for all
$n$ large enough, $A^+_k=D^+_{r^{k-1}}\setminus D^+_{r^k}$, $r^0=\delta, r^{N_k}=\lambda_nR
$, $r^k<r^{k-1}$ for $k=1,2,\cdots, N_k$, and $C_0$ is a constant as in Lemma \ref{main-lamm}.

\
\

The proof of this claim is very similar to those in \cite{JWZZ1, JZZ1, Z1} and the argument is now standard, so we omit it.
\
\

Now we apply {\bf Claim 1} and {\bf Claim 2} to prove (\ref{nev}). Let
$\varepsilon>0$ be small, and let $\delta$ be small enough, and let
$R$ and $ n$ be big enough. We apply Lemma \ref{main-lamm} to each
part $A_k^+$ to obtain
\begin{eqnarray*}
(\int_{A^+_l}|\Psi_n|^4)^{\frac 14} &\leq & C_0
(\int_{D^+_{er^{l-1}}\setminus D^+_{e^{-1}r^l}}|x|^{2\alpha}e^{2u_n})^{\frac
12}(\int_{D^+_{er^{l-1}}\setminus D^+_{e^{-1}r^l}}|\Psi_n|^4)^{\frac
14}\\
&+&C(\int_{D^+_{er^{l-1}}\setminus D^+_{r^{l-1}}}|\Psi_n|^4)^{\frac
14}+C(\int_{D^+_{r^{l}}\setminus D^+_{e^{-1}r^l}}|\Psi_n|^4)^{\frac 14}\\
&\leq & C_0 ((\int_{A^+_l}|x|^{2\alpha}e^{2u_n})^{\frac 12}+\varepsilon^{\frac
12}+\varepsilon^{\frac 12})((\int_{A^+_l}|\Psi_n|^4)^{\frac
14}+\varepsilon^{\frac 14}+\varepsilon^{\frac 14})+C\varepsilon^{\frac 14}\\
&\leq &C_0 (\int_{A^+_l}|x|^{2\alpha}e^{2u_n})^{\frac
12}(\int_{A^+_l}|\Psi_n|^4)^{\frac 14}+C(\varepsilon^{\frac
14}+\varepsilon^{\frac 12}+\varepsilon^{\frac 34})\\
& \leq & \frac 12 (\int_{A^+_l}|\Psi_n|^4)^{\frac
14}+C(\varepsilon^{\frac 14}+\varepsilon^{\frac
12}+\varepsilon^{\frac 34}).
\end{eqnarray*}
Therefore we have
\begin{equation*}(\int_{A^+_l}|\Psi_n|^4)^{\frac
14}\leq C(\varepsilon^{\frac 14}+\varepsilon^{\frac
12}+\varepsilon^{\frac 34}).
\end{equation*}
Since  $\varepsilon $ is small, we may assume $\varepsilon\leq 1$.
Then we get
\begin{equation}\label{2.1}
(\int_{A^+_l}|\Psi_n|^4)^{\frac 14}\leq C\varepsilon^{\frac 14}.
\end{equation}
With similar arguments, and using (\ref{2.1}), we have
\begin{equation}\label{2.2}
(\int_{A^+_l}|\nabla\psi_n|^{\frac 43})^{\frac 34}\leq
C\varepsilon^{\frac 14}.
\end{equation}
Summing up (\ref{2.1}) and (\ref{2.2}) on $A^+_l$ we get

\begin{equation}\label{2.3}
\int_{A^+_{\delta,
R,n}}|\Psi_n|^4+\int_{A^+_{\delta,R,n}}|\nabla\psi_n|^{\frac
43}=\sum_{l=1}^{N_0}\int_{A^+_l}|\Psi_n|^4+|\nabla\psi_n|^{\frac
43}\leq C\varepsilon^{\frac 13}.
\end{equation}
Thus we have shown (\ref{nev}) in the first case.

\
\

For {\bf Case II}, according the blow-up process, we define the neck domain is
$$ A^+_{S, R, n}=\{x\in \R^2_+|\rho_n R\leq |x-x_n|\leq |x_n|S \}.$$

Notice that
\begin{eqnarray*}
& & \int_{D^+_{\delta}}|\Psi_n|^4dv=\int_{D^+_{\frac{\delta}{|x_n|}}}|\overline{\Psi}_n|^4dv\\
& = & \int_{D^+_{\frac{\delta}{|x_n|}}\backslash D^+_{R_1}(y_n)}|\overline{\Psi}_n|^4dv+\int_{D^+_{R_1}(y_n)\backslash D^+_{\delta_n R_2}(y_n) }|\overline{\Psi}_n|^4dv+\int_{D^+_{\delta_n R_2}(y_n)}|\overline{\Psi}_n|^4dv\\
& = & \int_{D^+_{\frac{\delta}{|x_n|}}\backslash D^+_{R_1}(y_n)}|\overline{\Psi}_n|^4dv+\int_{D^+_{|x_n|R_1}(x_n)\backslash D^+_{|x_n|\delta_n R_2}(x_n) }|\Psi_n|^4dv+\int_{D^+_{\delta_n R_2}(y_n)}|\overline{\Psi}_n|^4dv.
\end{eqnarray*}
Duo to the assumption that $(u_n,\Psi_n)$ has only one bubble at the blow-up point $p=0$, $(\overline{u}_n,\overline{\Psi}_n)$ also has only one bubble at its blow-up point $y_0$. Therefore, we have
$$
\lim_{\delta\rightarrow 0}\lim_{R_1\rightarrow \infty}\lim_{n\rightarrow \infty}\int_{D^+_{\frac{\delta}{|x_n|}}\backslash D^+_{R_1}(y_n)}|\overline{\Psi}_n|^4dv=0.
$$
While $D^+_{\delta_n R_2}(y_n)$ is a bubble domain, we know to prove (\ref{nev}) it is sufficient to prove that
\begin{equation}\label{nev1}
\lim_{S\rightarrow \infty}\lim_{R\rightarrow \infty}\lim_{n\rightarrow \infty}\int_{A^+_{S, R, n}}|{\Psi}_n|^4dv=0.
\end{equation}

To prove (\ref{nev1}), by using the similar argument as the case 1, we have the following facts:

\

\noindent{\bf Fact II.1}:  For any small $\varepsilon>0$, and $T>0$, there exists some $N(T)$ such that for any $n\geq N(T)$ we
have
\begin{equation*}
\int_{D^+_{|x_n|S}(x_n) \setminus D^+_{|x_n|Se^{-T}}(x_n)}(|x|^{2\alpha}e^{2u_n}+|\Psi_n|^4)+\int_{\partial( D^+_{|x_n|S}(x_n) \setminus D^+_{|x_n|Se^{-T}}(x_n))\cap
\partial \R^2_+}|x|^{\alpha}e^{u_n}<\varepsilon,
\end{equation*}
for sufficiently large $S$.

\

\noindent {\bf Fact II.2}: For any small $\varepsilon>0$, and $T>0$, we may
choose an $N(T)$ such that when $n\geq N(T)$
\begin{eqnarray*}
& & \int_{D^+_{\rho_nRe^T}(x_n)\setminus
D^+_{\rho_nR}(x_n)}(|x|^{2\alpha}e^{2u_n}+|\Psi_n|^4)+\int_{\partial
(D^+_{\rho_nRe^T}(x_n)\setminus D^+_{\rho_nR}(x_n))\cap
\partial \R^2_+}|x|^{\alpha}e^{u_n}\\
&=& \int_{(D_{Re^T}\setminus
D_{R})\cap \{t>-\frac{t_n}{\rho_n}\}}(|\frac{x_n}{|x_n|}+\frac{\rho_n}{|x_n|}x|^{2\alpha}e^{2\widetilde{u}_n}+|\widetilde{\Psi}_n|^4)+\int_{(D_{Re^T}\setminus D_{R})\cap
\{t=-\frac{t_n}{\rho_n}\}}|\frac{x_n}{|x_n|}+\frac{\rho_n}{|x_n|}x|^{\alpha}e^{\widetilde{u}_n}\\
& < & \varepsilon,
\end{eqnarray*}
if $R$ is large enough.

\

Buy using the above two facts, we need to prove the following claim:

\

\noindent{\bf Claim II.1} For any
$\varepsilon>0$, there is an $N>1$ such that for any $n\geq N$, we
have
$$
\int_{D^+_r(x_n)\setminus
D^+_{e^{-1}r}(x_n)}(|x|^{2\alpha}e^{2u_n}+|\Psi_n|^4)+\int_{\partial
(D^+_r(x_n)\setminus D^+_{e^{-1}r}(x_n))\cap
\partial \R^2_+}|x|^{\alpha}e^{u_n}<\varepsilon; \quad
\forall r\in [e\rho_n R, |x_n|S].
$$

\

\noindent{\it Proof of Claim II.1} We assume  by a contradiction that there exists $\varepsilon_0>0$ and a sequence
${r_n}$, $r_n\in [e\rho_n R, |x_n|S]$, such that
$$
\int_{D^+_{r_n}(x_n)\setminus
D^+_{e^{-1}r_n}(x_n)}(|x|^{2\alpha}e^{2u_n}+|\Psi_n|^{4})+\int_{\partial
(D^+_{r_n}(x_n)\setminus D^+_{e^{-1}r_n}(x_n))\cap
\partial \R^2_+}|x|^{\alpha}e^{u_n}\geq \varepsilon_0.
$$
Then, by Fact II.1 and Fact II.2,  we know that $\frac
{|x_n|S}{r_n}\rightarrow +\infty$ and
$\frac{\rho_n R}{r_n}\rightarrow 0$, in particular, $r_n\rightarrow
0$ as $n\rightarrow +\infty$. We assume that $\Lambda =\lim_{n\rightarrow \infty}\frac{t_n}{r_n}$. Here $\Lambda$ is either a nonnegative real number or $+\infty$. Next we proceed by distinguishing two cases.

\

\noindent{\bf Case II.1} $\Lambda>0$.

\

In this case, we note that $D_{r_n\rho}(x_n)$ is in $\R^2_+$ when $n$ is sufficient small and $0<\rho<\Lambda$. We define the rescaling functions again
\begin{equation*}
\left\{
\begin{array}{rcl}
v_n(x)&=& u_n(r_nx+x_n)+\ln (r_n|x_n|^{\alpha}), \\
\varphi_n(x)&=& r_n^{\frac 12}\Psi(r_nx+x_n)\\
\end{array}
\right.
\end{equation*}
for any  $r_nx+x_n\in D^+_{|x_n|S}(x_n)\setminus
D^+_{\rho_n R}(x_n)$.
Then $(v_n(x), \varphi_n(x))$ satisfies that
\begin{equation}\label{3-1}
\int_{(D_1\setminus D_{e^{-1}})\cap \{t>-\frac{t_n}{r_n}\}}(|\frac{x_n}{|x_n|}+\frac{r_n}{|x_n|}x|^{2\alpha}e^{2v_n}+|\varphi_n|^4)+\int_{(D_1\setminus D_{e^{-1}})\cap
\{t=-\frac{t_n}{r_n}\}}|\frac{x_n}{|x_n|}+\frac{r_n}{|x_n|}x|^{\alpha}e^{v_n}\geq
\varepsilon_0.
\end{equation}
Note that $(v_n,\varphi_n)$ satisfies for any $R>0$ and $S>0$
\begin{equation*}
\left\{
\begin{array}{rcll}
-\triangle v_n(x) &=& 2V^2(r_nx+x_n)|\frac{x_n}{|x_n|}+\frac{r_n}{|x_n|}x|^{2\alpha}e^{2v_n(x)}\\
& &-V(r_nx+x_n)|\frac{x_n}{|x_n|}+\frac{r_n}{|x_n|}x|^{\alpha}e^{v_n(x)}
|\varphi_n(x)|^2, & \text{ in }
(D_{\frac{|x_n|S}{r_n}}\setminus D_{\frac{\rho_nR}{r_n}})\cap \{t>-\frac{t_n}{r_n}\}, \\
\slashiii{D}\varphi_n(x) &=& -V(r_nx+x_n)|\frac{x_n}{|x_n|}+\frac{r_n}{|x_n|}x|^{\alpha}e^{v_n(x)}\varphi_n(x), &
 \text{ in } (D_{\frac{|x_n|S}{r_n}}\setminus D_{\frac{\rho_nR}{r_n}})\cap \{t>-\frac{t_n}{r_n}\},\\
\frac{\partial v_n(x)}{\partial n}&=& cV(r_nx+x_n)|\frac{x_n}{|x_n|}+\frac{r_n}{|x_n|}x|^{\alpha}e^{v_n(x)}, &
\text{ on } (D_{\frac{|x_n|S}{r_n}}\setminus D_{\frac{\rho_nR}{r_n}})\cap \{t=-\frac{t_n}{r_n}\},\\
B\varphi_n(x)&=&0, & \text{ on }
(D_{\frac{|x_n|S}{r_n}}\setminus D_{\frac{\rho_nR}{r_n}})\cap \{t=-\frac{t_n}{r_n}\}.
\end{array}
\right.
\end{equation*}

According to Theorem \ref{mainthm}, there are three possible cases. Similar to the Case I, we can rule out the first and the second possible cases. If the third case happens, then there is no blow-up point in $(D_R\setminus D_{\frac 1R})\cap \{t\geq -b\} $ for any  $R>0$ and  any $b<\Lambda$. Furthermore $(v_n, \varphi_n)$  will converge to $(v,\varphi)$ strongly on $(D_R\setminus D_{\frac 1R})\cap \{t\geq -b\}$. If $\Lambda >0$, then $(v,\varphi)$ satisfies
\begin{equation}\label{8-1}
\left\{
\begin{array}{rcll}
-\triangle v &=& 2V^2(0)e^{2v}-V(0)e^{v}|\varphi|^2, & \quad \text{ in }\R^2_\Lambda\setminus\{0\},\\
\slashiii{D}\varphi &=& -V(0)e^{v}\varphi, & \quad \text{ in
}\R^2_\Lambda\setminus\{0\},\\
\frac{\partial v}{\partial n}&=& cV(0)e^{v}, &\quad \text{ on }
 \partial \R^2_\Lambda(\text { in the case of } \Lambda < +\infty), \\
B\varphi &=& 0, &\quad \text{ on }  \partial \R^2_\Lambda(\text { in the case of  } \Lambda < +\infty)
\end{array}
\right.
\end{equation}
with finite energy.

Since $D_{r_n\rho}(x_n)$ contains completely in $\R^2_+$ when $n$ is sufficient small and $0<\rho<\Lambda$, we know that  the origin is acturally an interior singular point of $(v,\varphi)$ to  (\ref{8-1}). Then this local singular can be removed by using the similar arguments in the case II of \cite{JZZ3}. After removing the local sigularity $0$, we can remove  the singularity  at  $\infty$ of $(v,\varphi)$ to (\ref{8-1}) by Theorem \ref{rgs}. Thus  we get another bubble on $S^2_{c'}$, and  we get a contradiction to the assumption that $m=1$. Concequently we complete the proof of the claim II.1.

\

\noindent{\bf Case II.2} $\Lambda=0$.

\

In this case, noticing that $x_n=(s_n,t_n)$ and   $\lim_{n\rightarrow \infty}\frac
{|x_n|}{r_n}=+\infty$, we have   $\lim_{n\rightarrow \infty}\frac{|s_n|}{t_n}=+\infty$ and $\lim_{n\rightarrow \infty}\frac{|s_n|}{r_n}=+\infty$. We set $x'_n=(s_n, 0)$. Then we define the rescaling functions in this case
\begin{equation*}
\left\{
\begin{array}{rcl}
v_n(x)&=& u_n(r_nx+x'_n)+\ln (r_n|s_n|^{\alpha}), \\
\varphi_n(x)&=& r_n^{\frac 12}\Psi(r_nx+x'_n)\\
\end{array}
\right.
\end{equation*}
for any  $r_nx+x'_n\in D^+_{|x_n|S}(x'_n)\setminus
D^+_{\rho_n R}(x'_n)$.
Since that
\begin{eqnarray*}
& & \int_{D^+_{\frac 32 r_n}(x'_n)\setminus D^+_{\frac {1}{2}e^{-1}r_n}(x'_n)}(|x|^{2\alpha}e^{2u_n}+|\Psi_n|^{4})+\int_{\partial
(D^+_{\frac 12 r_n}(x'_n)\setminus D^+_{\frac 32 e^{-1}r_n}(x'_n))\cap
\partial \R^2_+}|x|^{\alpha}e^{u_n}\nonumber \\
& \geq & \int_{D^+_{r_n}(x_n)\setminus
D^+_{e^{-1}r_n}(x_n)}(|x|^{2\alpha}e^{2u_n}+|\Psi_n|^{4})+\int_{\partial
(D^+_{r_n}(x_n)\setminus D^+_{e^{-1}r_n}(x_n))\cap
\partial \R^2_+}|x|^{\alpha}e^{u_n}\nonumber\\
& \geq &  \varepsilon_0,
\end{eqnarray*}
we have that $(v_n(x), \varphi_n(x))$ satisfies that
\begin{equation}\label{8-3}
\int_{D^+_{\frac 32}\setminus D^+_{\frac { e^{-1}}2}}(|\frac{x'_n}{|s_n|}+\frac{r_n}{|s_n|}x|^{2\alpha}e^{2v_n}+|\varphi_n|^4)+\int_{\partial(D^+_{\frac{3}{2}}\setminus D^+_{\frac {e^{-1}}2})\cap
\{t=0\}}|\frac{x'_n}{|s_n|}+\frac{r_n}{|s_n|}x|^{\alpha}e^{v_n}\geq
\varepsilon_0.
\end{equation}
Note that $(v_n,\varphi_n)$ satisfies for any $R>0$ and $S>0$
\begin{equation*}
\left\{
\begin{array}{rcll}
-\triangle v_n(x) &=& 2V^2(r_nx+x'_n)|\frac{x'_n}{|s_n|}+\frac{r_n}{|s_n|}x|^{2\alpha}e^{2v_n(x)}\\
& &-V(r_nx+x'_n)|\frac{x'_n}{|s_n|}+\frac{r_n}{|s_n|}x|^{\alpha}e^{v_n(x)}
|\varphi_n(x)|^2, & \text{ in }
(D^+_{\frac{|x_n|S}{r_n}}\setminus D^+_{\frac{\rho_nR}{r_n}}), \\
\slashiii{D}\varphi_n(x) &=& -V(r_nx+x'_n)|\frac{x'_n}{|s_n|}+\frac{r_n}{|s_n|}x|^{\alpha}e^{v_n(x)}\varphi_n(x), &
 \text{ in } (D^+_{\frac{|x_n|S}{r_n}}\setminus D^+_{\frac{\rho_nR}{r_n}}),\\
\frac{\partial v_n(x)}{\partial n}&=& cV(r_nx+x'_n)|\frac{x'_n}{|s_n|}+\frac{r_n}{|s_n|}x|^{\alpha}e^{v_n(x)}, &
\text{ on } \partial (D^+_{\frac{|x_n|S}{r_n}}\setminus D^+_{\frac{\rho_nR}{r_n}})\cap \partial \R^2_+,\\
B\varphi_n(x)&=&0, & \text{ on }
\partial (D^+_{\frac{|x_n|S}{r_n}}\setminus D^+_{\frac{\rho_nR}{r_n}})\cap \partial \R^2_+.
\end{array}
\right.
\end{equation*}
According to Theorem \ref{mainthm}, there are three possible cases. From (\ref{8-3}), we can rule out the first and the second possible cases by using the simiar arguments of Case I.  Next we assume that the third case happens, i.e. there is no blow-up point in $\overline{D^+_R\setminus D^+_{\frac 1R}} $ for any  $R>0$.  Furthermore $(v_n, \varphi_n)$  will converge to $(v,\varphi)$ strongly on $\overline{D^+_R\setminus D^+_{\frac 1R}} $, and $(v,\varphi)$ satisfies
\begin{equation}
\left\{
\begin{array}{rcll}
-\triangle v &=& 2V^2(0)e^{2v}-V(0)e^{v}|\varphi|^2, & \quad \text{ in }\R^2_+,\\
\slashiii{D}\varphi &=& -V(0)e^{v}\varphi, & \quad \text{ in
}\R^2_+,\\
\frac{\partial v}{\partial n}&=& cV(0)e^{v}, &\quad \text{ on }
 \partial \R^2_+\setminus\{0\}, \\
B\varphi &=& 0, &\quad \text{ on }  \partial \R^2_+\setminus\{0\}
\end{array}
\right.
\end{equation}\label{8-2}
with finite energy.

Next we will remove two singular points at $0$  and at $\infty$, and concequently we get the second bubble of the considered system. Thus we get a contradiction. To this purpose, let us computate the Pohozaev constant of $(v, \varphi)$. Let start with the Pohozaev identity of $(u_n, \Phi_n)$.  We multiply all terms in \eqref{Eq-B} by $(x-x'_n)\cdot \nabla u_n$ and integrate over  $D^+_{r_n\rho}(x'_n)$. It follows for any sufficient small  $\rho$ that
\begin{eqnarray*}
&& r_n\rho\int_{S^+_{r_n\rho}(x'_n)} |\frac {\partial u_n}{\partial
\nu}|^2-\frac
12|\nabla u_n|^2d\sigma \nonumber\\
&=& \int_{D^+_{r_n\rho}(x'_n)}(2V^2(x)|x|^{2\alpha}e^{2u_n}-V(x)|x|^{\alpha}e^{u_n}|\Psi_n|^2)dv+
\int_{L_{r_n\rho}(x'_n)}cV(x)|x|^{\alpha}e^{u_n}ds \nonumber\\
& & -r_n\rho\int_{S^+_{r_n\rho}(x'_n)}V^2(x)|x|^{2\alpha}e^{2u_n}d\sigma +\int_{L_{r_n\rho}(x'_n)}c\frac {\partial (V(s,0)|s|^{\alpha})}{\partial s}(s-s_n)e^{u_n}ds\nonumber\\
& & -cV(s,0)|s|^{\alpha}(s-s_n)e^{u_n}|^{s=x'_n+r_n\rho}_{s=x'_n-r_n\rho} \nonumber\\
& &+\int_{D^+_{r_n\rho}(x'_n)}(x-x'_n)\cdot \nabla (V^2(x)|x|^{2\alpha})e^{2u_n}dv -\int_{D^+_{r_n\rho}(x'_n)}(x-x'_n)\cdot \nabla
(V(x)|x|^{\alpha})e^{u_n}|\Psi_n|^2dv \nonumber\\
& & + \frac 14\int_{S^+_{r_n\rho}(x'_n)}\la\frac {\partial
\Psi}{\partial \nu},(x+\bar{x}-2x'_n)\cdot\Psi\ra d\sigma+\frac
14\int_{S^+_{r_n\rho}(x'_n)}\la (x+\bar{x}-2x'_n)\cdot\Psi, \frac
{\partial \Psi}{\partial \nu}\ra d\sigma
\end{eqnarray*}
Hence for rescaling functions $(v_n, \varphi_n)$ we have
\begin{eqnarray}\label{8-4}
&& \rho\int_{ S^+_{\rho}} |\frac {\partial v_n}{\partial
\nu}|^2-\frac
12|\nabla v_n|^2d\sigma \nonumber\\
&=& \int_{D^+_{\rho}}2V^2(r_nx+x'_n)|\frac {x'_n}{|s_n|}+\frac{r_n}{|s_n|}x|^{2\alpha}e^{2v_n}-V(r_nx+x'_n)|\frac {x'_n}{|s_n|}+\frac{r_n}{|s_n|}x|^{\alpha}e^{v_n}|\varphi_n|^2dv\nonumber\\
& & +\int_{L_{\rho}}cV(r_nx+x'_n)|\frac {x'_n}{|s_n|}+\frac{r_n}{|s_n|}x|^{\alpha}e^{v_n}ds \nonumber\\
& & -\rho\int_{S^+_{\rho}}V^2(r_nx+x'_n)|\frac {x'_n}{|s_n|}+\frac{r_n}{|s_n|}x|^{2\alpha}e^{2v_n}d\sigma +\int_{ L_{\rho}}c\frac {\partial( V((r_ns+s_n,0))|\frac {r_n}{|s_n|}s+\frac {s_n}{|s_n|}|^{\alpha})}{\partial s}se^{v_n}ds\nonumber\\
& & -cV((r_ns+s_n,0))|\frac {r_n}{|s_n|}s+\frac {s_n}{|s_n|}|^{\alpha}se^{v_n}|^{s=\rho}_{s=-\rho} \nonumber\\
& & +\int_{D^+_{\rho}}x\cdot \nabla (V^2(r_nx+x'_n)|\frac {x'_n}{|s_n|}+\frac{r_n}{|s_n|}x|^{2\alpha}) e^{2v_n}dv -\int_{D^+_{\rho}}x\cdot \nabla
(V(r_nx+x'_n)|\frac {x'_n}{|s_n|}+\frac{r_n}{|s_n|}x|^{\alpha})e^{v_n}|\varphi_n|^2dv \nonumber\\
& & + \frac 14\int_{S^+_{\rho}}\la\frac {\partial
\varphi_n}{\partial \nu},(x+\bar{x})\cdot\varphi_n\ra d\sigma+\frac
14\int_{ S^+_{\rho}}\la (x+\bar{x})\cdot\varphi_n, \frac
{\partial \varphi_n}{\partial \nu}\ra d\sigma.
\end{eqnarray}
Since  the associated Pohozaev constant of $(v_n,\varphi_n)$ is
\begin{eqnarray*}
& & C(v_n, \varphi_n)  =  C(v_n, \varphi_n,\rho)\\
& = & \rho\int_{ S^+_{\rho}} |\frac {\partial v_n}{\partial
\nu}|^2-\frac
12|\nabla v_n|^2d\sigma \nonumber\\
&& -\int_{D^+_{\rho}}2V^2(r_nx+x'_n)|\frac {x'_n}{|s_n|}+\frac{r_n}{|s_n|}x|^{2\alpha}e^{2v_n}-V(r_nx+x'_n)|\frac {x'_n}{|s_n|}+\frac{r_n}{|s_n|}x|^{\alpha}e^{v_n}|\varphi_n|^2dv\\
& & -\int_{L_{\rho}}cV(r_nx+x'_n)|\frac {x'_n}{|s_n|}+\frac{r_n}{|s_n|}x|^{\alpha}e^{v_n}ds \nonumber\\
& & +\rho\int_{S^+_{\rho}}V^2(r_nx+x'_n)|\frac {x'_n}{|s_n|}+\frac{r_n}{|s_n|}x|^{2\alpha}e^{2v_n}d\sigma -\int_{ L_{\rho}}c\frac {\partial( V((r_ns+s_n,0))|\frac {r_n}{|s_n|}s+\frac {s_n}{|s_n|}|^{\alpha})}{\partial s}se^{v_n}ds\nonumber\\
& & +cV((r_ns+s_n,0))|\frac {r_n}{|s_n|}s+\frac {s_n}{|s_n|}|^{\alpha}se^{v_n}|^{s=\rho}_{s=-\rho} \nonumber\\
& & -\int_{D^+_{\rho}}x\cdot \nabla (V^2(r_nx+x'_n)|\frac {x'_n}{|s_n|}+\frac{r_n}{|s_n|}x|^{2\alpha}) e^{2v_n}dv +\int_{D^+_{\rho}}x\cdot \nabla
(V(r_nx+x'_n)|\frac {x'_n}{|s_n|}+\frac{r_n}{|s_n|}x|^{\alpha})e^{v_n}|\varphi_n|^2dv \nonumber\\
& & - \frac 14\int_{S^+_{\rho}}\la\frac {\partial
\varphi_n}{\partial \nu},(x+\bar{x})\cdot\varphi_n\ra d\sigma-\frac
14\int_{ S^+_{\rho}}\la (x+\bar{x})\cdot\varphi_n, \frac
{\partial \varphi_n}{\partial \nu}\ra d\sigma,
\end{eqnarray*}
we have from (\ref{8-4}) that
$$
C(v_n, \varphi_n)  =  C(v_n, \varphi_n,\rho)=0.
$$
Since that $|\frac {x'_n}{|s_n|}+\frac{r_n}{|s_n|}x|^{2\alpha}$ is a smooth function in $\overline {D^+_\rho}$, by the energy condition,
 $$\int_{D^+_\rho}|\frac {x'_n}{|s_n|}+\frac{r_n}{|s_n|}x|^{2\alpha}e^{2v_n}+|\varphi_n|^4dv+\int_{L_\rho}|\frac {x'_n}{|s_n|}+\frac{r_n}{|s_n|}x|^{\alpha}e^{v_n}ds<C,$$
we can easily to check that
$$
\lim_{\rho\rightarrow 0}\lim_{n\rightarrow \infty}\int_{D^+_{\rho}}x\cdot \nabla (V^2(r_nx+x'_n)|\frac {x'_n}{|s_n|}+\frac{r_n}{|s_n|}x|^{2\alpha}) e^{2v_n}dv +\int_{D^+_{\rho}}x\cdot \nabla
(V(r_nx+x'_n)|\frac {x'_n}{|s_n|}+\frac{r_n}{|s_n|}x|^{\alpha})e^{v_n}|\varphi_n|^2dv =0,
$$
and
$$
\lim_{\rho\rightarrow 0}\lim_{n\rightarrow \infty}\int_{ L_{\rho}}c\frac {\partial( V((r_ns+s_n,0))|\frac {r_n}{|s_n|}s+\frac {s_n}{|s_n|}|^{\alpha})}{\partial s}se^{v_n}ds=0.
$$
This implies that
\begin{eqnarray*}
0 & = & \lim_{\rho\rightarrow 0}\lim_{n\rightarrow \infty} C(v_n, \varphi_n,\rho) =  \lim_{\rho\rightarrow 0}C(v,\varphi,\rho)\\
& & -\lim_{r\rightarrow 0}\lim_{n\rightarrow \infty}\int_{D^+_{r}}2V^2(r_nx+x'_n)|\frac {x'_n}{|s_n|}+\frac{r_n}{|s_n|}x|^{2\alpha}e^{2v_n}-V(r_nx+x'_n)|\frac {x'_n}{|s_n|}+\frac{r_n}{|s_n|}x|^{\alpha}e^{v_n}|\varphi_n|^2dv\\
& & -\lim_{r\rightarrow 0}\lim_{n\rightarrow \infty}\int_{L_{r}}cV(r_nx+x'_n)|\frac {x'_n}{|s_n|}+\frac{r_n}{|s_n|}x|^{\alpha}e^{v_n}ds\\
& = & C(v,\varphi)-\beta.
\end{eqnarray*}
Here
\begin{eqnarray*}
\beta & = & \lim_{r\rightarrow 0}\lim_{n\rightarrow \infty}[\int_{D^+_{r}}2V^2(r_nx+x'_n)|\frac {x'_n}{|s_n|}+\frac{r_n}{|s_n|}x|^{2\alpha}e^{2v_n}-V(r_nx+x'_n)|\frac {x'_n}{|s_n|}+\frac{r_n}{|s_n|}x|^{\alpha}e^{v_n}|\varphi_n|^2dv]\\
& & +\lim_{r\rightarrow 0}\lim_{n\rightarrow \infty}\int_{L_{r}}cV(r_nx+x'_n)|\frac {x'_n}{|s_n|}+\frac{r_n}{|s_n|}x|^{\alpha}e^{v_n}ds.
\end{eqnarray*}
and $C(v,\varphi)=C(v,\varphi,\rho)$ is the Pohozaev constant of $(v,\varphi)$, i.e.
\begin{eqnarray*}
C(v, \varphi)  & = & \rho\int_{ S^+_{\rho}} |\frac {\partial v}{\partial
\nu}|^2-\frac
12|\nabla v|^2d\sigma \nonumber\\
& & -\int_{D^+_{\rho}}2V^2(0)e^{2v}-V(0)e^{v}|\varphi|^2dv+
\int_{L_{\rho}}cV(0)e^{v}ds \nonumber\\
& & +\rho \int_{S^+_{\rho}}V^2(0)e^{2v}d\sigma +cV((0,0))se^{v}|^{s=\rho}_{s=-\rho} \nonumber\\
& & - \frac 14\int_{S^+_{\rho}}\la\frac {\partial
\varphi}{\partial \nu},(x+\bar{x})\cdot\varphi\ra d\sigma-\frac
14\int_{ S^+_{\rho}}\la (x+\bar{x})\cdot\varphi, \frac
{\partial \varphi}{\partial \nu}\ra d\sigma.\\
\end{eqnarray*}
On the other hand, we use the fact that $(v_n,\varphi_n)$ converges to $(v,\varphi)$ in $C^2_{loc}(\R^2_+)\cap C^1_{loc}(\overline{\R}^2_+\backslash \{0\})\times C^2_{loc}(\Gamma(\Sigma\R^2_+))\cap C^1_{loc}(\Gamma(\Sigma\overline{\R}^2_+\backslash \{0\}))$ again to get
\begin{eqnarray*}
& & \int_{D^+_{r}}2V^2(r_nx+x'_n)|\frac {x'_n}{|s_n|}+\frac{r_n}{|s_n|}x|^{2\alpha}e^{2v_n}-V(r_nx+x'_n)|\frac {x'_n}{|s_n|}+\frac{r_n}{|s_n|}x|^{\alpha}e^{v_n}|\varphi_n|^2dv\\
&& +\int_{L_{r}}cV(r_nx+x'_n)|\frac {x'_n}{|s_n|}+\frac{r_n}{|s_n|}x|^{\alpha}e^{v_n}ds\\
& \rightarrow &  \int_{D^+_{\rho}}2V^2(0)e^{2v}-V(0)e^{v}|\varphi|^2dv+\int_{L_{\rho}}cV(0)e^{v}ds+\beta
\end{eqnarray*}
as $n\rightarrow \infty$. By using Green's representation formula for $u_n$ in $D^+_\rho$ and then take $n\rightarrow \infty$, we have
$$
v(x)=\frac{\beta}{\pi}\ln\frac{1}{|x|}+\phi(x)+\gamma(x),
$$  with $\phi$ being a bounded term and $\gamma(x)$ being a regular term. Consenquently, we can obtain that
$$
C(v,\varphi)=\frac{\beta^2}{2\pi}.
$$
This implies that
$$
\beta=\frac{\beta^2}{2\pi}.
$$
Noticing that $\int_{D^+_\rho}e^{2v}dx<\infty$, we have $\beta\leq \pi$. Therefore we obtain that $\beta=0$, i.e. $C(v,\varphi)=0$, and the singularity at $0$ of $(v,\varphi)$ is removed by Propostion \ref{thm-sigu-move1-B}. Forthermore, the singularity at $\infty$ of $(v,\varphi)$ is also removed by Theorem \ref{rgs}. Thus  we get another bubble on $S^2_{c'}$, and  we get a contradiction to the
assumption that $m=1$. Concequently we complete the proof of {\bf Claim II.1}.

\
\

Next , similarly to {\bf Case I}. we can prove the following:\\

\noindent {\bf Claim II.2}  We can separate $A^+_{S, R, n}(x_n)$ into
finitely many parts
$$
A^+_{S, R, n}=\bigcup_{k=1}^{N_k}A^+_k$$ such that on each part
$$
\int_{A^+_k}|x|^{2\alpha}e^{2u_n}\leq \frac 1{4\Lambda^2},\quad k=1,2,\cdots ,N_k.
$$
where  $N_k\leq N_0$ for $N_0$  is a uniform integer for all
$n$ large enough, $A^+_k=D^+_{r^{k-1}}(x_n)\setminus D^+_{r^k}(x_n)$, $r^0=\delta, r^{N_k}=\lambda_nR
$, $r^k<r^{k-1}$ for $k=1,2,\cdots, N_k$, and $C_0$ is a constant as in Lemma \ref{main-lamm}.\\

Then,  by using {\bf Claim II.1} and {\bf Claim II.2} we can complete the proof of the result.  \qed

 \
 \

\section{Blow-up Behavior}\label{blow}

In this section, we will show that  $u_n\rightarrow -\infty$ uniformly on compact subset of $(D^+_r\cup L_r)\setminus \Sigma_1$ in means of the energy identity for spinors. Thus we rule out the possibility that $u_n$ is uniformly bounded in $L^{\infty}_{loc}((D^+_r\cup L_r)\setminus\Sigma_1)$ in Theorem \ref{mainthm}. The following  is the proof of Theorem \ref{mainthm-1}.

\

\noindent{\bf Proof of Theorem \ref{mainthm-1}:}
We prove the results by contradiction. Assume that the conclusion of the theorem is false.
Then by Theorem \ref{mainthm}, $u_n$ is uniformly bounded in $L_{loc}^{\infty}((D^+_r\cup L_r)\setminus\Sigma_1)$. Thus we know that $(u_n, \Psi_n)$
converges in $C^2$ on any compact subset of $(D^+_r\cup L_r)\setminus\Sigma_1$ to $(u,\Psi)$, which satisfies that
\begin{equation}\label{Eq-B-1}
\left\{
\begin{array}{rcll}
-\Delta u(x) &=& 2u^2(x)|x|^{2\alpha}e^{2u(x)}-V(x)|x|^{\alpha}e^{u(x)}|\Psi|^2,  \quad &\text {in } D^+_{r}\setminus \Sigma_1, \\
\slashiii{D}\Psi &=& -V(x)|x|^{\alpha}e^{u(x)}\Psi, \quad &\text {in } D^+_{r}\setminus \Sigma_1, \\
\frac {\partial u}{\partial n}& = & cV(x)|x|^{\alpha}e^{u(x)},\quad & \text { on } L_r\setminus \Sigma_1, \\
B(\Psi) &=& 0,\quad & \text { on } L_r\setminus \Sigma_1.
\end{array}
\right.
\end{equation}
with bounded energy
$$\int_{D^+_r}(|x|^{2\alpha}e^{2u}+|\Psi|^4)dx +\int_{L_r}|x|^\alpha e^u ds< +\infty.$$

Since the blow-up set $\Sigma_1$ is not empty, we can take a point $p \in \Sigma_1$. Choose a small $\delta_0>0$ such that
$p$ is the only point of $\Sigma _1$ in $\overline{D}_{2\delta_0} (p)\cap (D^+_r\cup L_r)=\{p\}$. If $p$ is the interior point of $D^+_r$, then we can choose $\delta_0$ sufficiently small such that $D_{2\delta_0}(p)\subset (D^+_r\cup L_r)$. Hence by Theorem 1.3 in \cite{JZZ3} we can get a contradiction.

Next we assume that $p$ is on $L_r$. Without loss of generality, we assume that $p=0$. The case of $p\neq 0$ can be dealed with in an analogous way.

We shall first show that the limit $(u,\Psi)$ is regular at the isolated singularity $p=0$, i.e.  $u\in C^{2}(D^+_r)\cap C^{1}(D^+_r\cup L_r)$
and
 $\Psi \in C^{2}(\Gamma(\Sigma D^+_r ))\cap C^{1}(\Gamma(\Sigma (D^+_r\cup L_r)))$ for some small $r>0$. To this end, we shall using Theorem \ref{thm-sigu-move1-B} for removability of a local singularity to remove the singularity. We know that the Phohozaev constant, denote $C_B(u, \Psi)$,  of $(u, \Psi)$ at $p=0$ is
\begin{eqnarray*}
C_B(u,\Psi) &:=& C_B(u,\Psi,\rho) = \rho\int_{S^+_\rho} |\frac {\partial u}{\partial
\nu}|^2-\frac
12|\nabla u|^2d\sigma \nonumber\\
& & -(1+\alpha)\int_{D^+_\rho}(2V^2(x)|x|^{2\alpha}e^{2u}-V(x)|x|^{\alpha}e^u|\Psi|^2)dv-
(\alpha+1)\int_{L_\rho}cV(x)|x|^{\alpha}e^{u}ds \nonumber\\
& & +\rho\int_{S^+_\rho}V^2(x)|x|^{2\alpha}e^{2u}d\sigma -\int_{L_\rho}c\frac {\partial V(s,0)}{\partial s}|s|^{\alpha}se^uds+cV(s,0)|s|^{\alpha}se^u|^{s=\rho}_{s=-\rho} \nonumber\\
& &-\int_{D^+_\rho}x\cdot \nabla (V^2(x))|x|^{2\alpha}e^{2u}dv +\int_{D^+_\rho}x\cdot \nabla
V(x)|x|^{\alpha}e^u|\psi|^2dv \nonumber\\
& & - \frac 14\int_{S^+_\rho}\la\frac {\partial
\Psi}{\partial \nu},(x+\bar{x})\cdot\Psi\ra d\sigma-\frac
14\int_{S^+_\rho}\la (x+\bar{x})\cdot\Psi, \frac
{\partial \Psi}{\partial \nu}\ra d\sigma
\end{eqnarray*} for any $0<\rho<\delta_0$.
On the other hand, since $(u_n,\Psi_n)$ are the regular solution, the Pohozaev constant $C_B(u_n, \Psi_n)=C_B(u_n, \Psi_n,\rho)$ satisfies

\begin{eqnarray*}
0 = C(u_n, \Psi_n) &=& C(u_n, \Psi_n,\rho)\\
&=& \rho\int_{ S^+_{\rho}} |\frac {\partial u_n}{\partial
\nu}|^2-\frac
12|\nabla u_n|^2d\sigma \nonumber\\
&&- (1+\alpha)\int_{D^+_{\rho}}2V^2(x)|x|^{2\alpha}e^{2u_n}-V(x)|x|^{\alpha}e^{u_n}|\Psi_n|^2dv-
(\alpha+1)\int_{L_{\rho}}cV(x)|x|^{\alpha}e^{u_n}ds \nonumber\\
& & +\rho\int_{S^+_{\rho}}V^2(x)|x|^{2\alpha}e^{2u_n}d\sigma -\int_{ L_{\rho}}c\frac {\partial V(s,0)}{\partial s}|s|^{\alpha}se^{u_n}ds+cV((s,0))|s|^{\alpha}se^{u_n}|^{s=\rho}_{s=-\rho} \nonumber\\
& & -\int_{D^+_{\rho}}x\cdot \nabla (V^2(x))|x|^{2\alpha}e^{2u_n}dv +\int_{D^+_{\rho}}x\cdot \nabla
V(x)|x|^{\alpha}e^{u_n}|\Psi_n|^2dv \nonumber\\
& & - \frac 14\int_{S^+_{\rho}}\la\frac {\partial
\Psi_n}{\partial \nu},(x+\bar{x})\cdot\Psi_n\ra d\sigma-\frac
14\int_{ S^+_{\rho}}\la (x+\bar{x})\cdot\Psi_n, \frac
{\partial \Psi_n}{\partial \nu}\ra d\sigma.
\end{eqnarray*}
Let $n\rightarrow \infty$ and $\rho\rightarrow 0$, by using that $(u_n,\Psi_n)$ converges to $(u,\Psi)$ regularily on any compact subset of $\overline{D}^+_{2\delta_0} \setminus \{0\}$ and that the energy  condition (\ref{condition}), to get
\begin{eqnarray*}
0 &=& \lim_{\rho\rightarrow 0}\lim_{n\rightarrow \infty}C(u_n,\Psi_n, \rho)=\lim_{\rho\rightarrow 0}C(u,\Psi, \rho) \\
& & -(1+\alpha)\lim_{\delta\rightarrow 0}\lim_{n\rightarrow \infty}\{\int_{D^+_\delta}(2V^2(x)|x|^{2\alpha}e^{2u_n}-V(x)|x|^{\alpha}e^{u_n}|\Psi_n|^2)dx+\int_{L_{\delta}}cV(x)|x|^{\alpha}e^{u_n}ds\}\\
&=&C(u,\Psi)-(1+\alpha)\beta,
\end{eqnarray*}
where
$$\beta=\lim_{\delta\rightarrow 0}\lim_{n\rightarrow \infty}\{\int_{D^+_\delta}(2V^2(x)|x|^{2\alpha}e^{2u_n}-V(x)|x|^{\alpha}e^{u_n}|\Psi_n|^2)dx+\int_{L_{\delta}}cV(x)|x|^{\alpha}e^{u_n}ds\}.$$
Moreover, we can also assume  that
\begin{eqnarray*}
&& (2V^2(x)|x|^{2\alpha}e^{2u_n}-V(x)|x|^{\alpha}e^{u_n}|\Psi_n|^2 )dx+ cV(x)|x|^{\alpha}e^{u_n}ds\\
&  \rightarrow & \nu =(2V^2(x)|x|^{2\alpha}e^{2u}-V(x)|x|^{\alpha}e^{u}|\Psi|^2)dx+ cV(x)|x|^{\alpha}e^{u}ds+\beta\delta_{p=0}
\end{eqnarray*}in the sense of distributions in $D^+_\delta\cup L_\delta$ for any small $\delta>0$. Then, applying similar arguments as in the proof of the local singularity removability in Claim I.1, Theorem \ref{engy-indt}, we can show that  $C_B(u,\Psi)=0$, $\beta = 0$ and hence $(u, \Psi)$ is a regular solution of \eqref{Eq-B} on $D^+_{2\delta_0}$ with bounded energy
$$\int_{D^+_{2\delta_0}}(|x|^{2\alpha} e^{2u}+|\Psi|^4)dx+ \int_{L_{2\delta}}|x|^{\alpha}e^{u}ds< +\infty.$$
Hence, we can choose some small $\delta_1 \in (0, \delta_0)$ such that for any $\delta \in (0, \delta_1)$,
\begin{equation} \label{less1/10}
\int_{D^+_{\delta}}(2V^2(x)|x|^{2\alpha}e^{2u}-V(x)|x|^{\alpha}e^{u}|\Psi|^2)dx +\int_{L_\delta} cV(x)|x|^{\alpha}e^{u}ds< {\rm min} \{ \frac{1+\a}{10},  \frac{1}{10}\}.
\end{equation}

Next, as  in the proof of Theorem \ref{engy-indt}, we rescale $(u_n,\Psi_n)$ near $p=0$. We let $x_n\in \overline{D}^+_{\delta_1}$ such that
$u_n(x_n)=\max_{\overline{D}^+_{\delta_1}}u_n(x)$. Write $x_n=(s_n,t_n)$.  Then $x_n\rightarrow p=0$ and
$u_n(x_n)\rightarrow +\infty$. Define  $\lambda_n=e^{-\frac{u_n(x_n)}{\alpha+1}}$. It is clear that $\lambda_n$, $|x_n|$ and $t_n$ converge to $0$ as $n\rightarrow 0$. we will proceed by distinguishing the following  three cases:

\

\noindent{\bf Case I.} $\frac{|x_n|}{\lambda_n}=O(1)$ as $n\rightarrow +\infty$.

\

In this case, the rescaling functions are
\begin{equation*}
\left\{
\begin{array}{rcl}
\widetilde{u}_n(x)&=&u_n(\lambda_nx)+(1+\alpha)\ln {\lambda_n}\\
\widetilde{\Psi}_n(x)&=&\lambda^{\frac
12}_n\Psi_n(\lambda_nx)
\end{array}
\right.
\end{equation*}
for any $x \in \overline {D}^+_{\frac{\delta_1}{2\lambda_n}}$. Moreover, by passing to a subsequence,
  $(\widetilde{u}_n,\widetilde{\Psi}_n)$ converges in  $C^{2}_{loc}(\R^2_+)\cap C^{1}_{loc}(\bar{\R}^2_+)\times C^{2}_{loc}(\Gamma(\Sigma {\R}^2_+))\cap C^{1}_{loc}(\Gamma(\Sigma {\bar{\R}}^2_+))$ to some $(\widetilde
u,\widetilde \Psi)$ satisfying
\begin{equation*}
\left\{
\begin{array}{rcll}
-\Delta \widetilde{u} &=& 2V^2(0)|x|^{2\alpha}e^{2\widetilde{u}}-V(0)|x|^{\alpha}e^{\widetilde u}|
\widetilde \Psi|^2, &\quad \text { in } \R^2_+,\\
\slashiii{D}\widetilde{\Psi} &=&
-V(0)|x|^{\alpha}e^{\widetilde{u}}\widetilde{\Psi},  &\quad \text { in } \R^2_+,\\
\frac{\partial \widetilde{u}}{\partial
n}&=&cV(0)|x|^\alpha e^{\widetilde{u}}, & \quad \text{ on }  \partial \R^2_+,\\
B\widetilde{\Psi}&=& 0, & \quad \text{ on } \partial \R^2_+
\end{array}
\right.
\end{equation*}
and
\[\int_{\R^2_+}(2V^2(0)|x|^{2\alpha}e^{2\widetilde{u}}-V(0)|x|^{\alpha}e^{\widetilde {u}}|\widetilde{\Psi}|^2)dx+\int_{\partial \R^2_+}
cV(0)|x|^\alpha e^{\widetilde{u}}d\sigma=2\pi(1+\alpha).\]
Then for $\delta\in(0, \delta_1)$ small enough, $R>0$ large enough  and $n$ large enough, we have
\begin{eqnarray}
& & \int_{D^+_{\delta}}(2V^2(x)|x|^{2\alpha}e^{2u_n}-V(x)|x|^{\alpha}e^{u_n}|\Psi_n|^2)dx+\int_{L_{\delta}}cV(x)|x|^{\alpha}e^{u_n}ds\nonumber\\
&=&\int_{D_{\lambda_n R}}(2V^2(x)|x|^{2\alpha}e^{2u_n}-V(x)|x|^{\alpha}e^{u_n}|\Psi_n|^2)dx+\int_{L_{\lambda_n R}}cV(x)|x|^{\alpha}e^{u_n}ds\nonumber\\
& & +\int_{D^+_\delta\backslash D^+_{\lambda_n R}}(2V^2(x)|x|^{2\alpha}e^{2u_n}-V(x)|x|^{\alpha}e^{u_n}|\Psi_n|^2)dx+\int_{L_{\delta}\setminus L_{\lambda_n R}}cV(x)|x|^{\alpha}e^{u_n}ds\nonumber\\
&\geq & \int_{D^+_{R}}(2V^2(\lambda_nx)|x|^{2\alpha}e^{2\widetilde{u}_n}-V(\lambda_nx)|x|^{\alpha}e^{\widetilde{u}_n}|\widetilde{\Psi}_n|^2)+\int_{L_{R}}cV(\lambda_n x)|x|^{\alpha}e^{\widetilde{u}_n}ds\nonumber\\
& & -\int_{D^+_\delta\backslash D^+_{t_nR}}V(x)|x|^{\alpha}e^{u_n}|\Psi_n|^2 \nonumber\\
& \geq &  2\pi(1+\alpha) - \frac{1+\a}{10}.  \label{almost2pi}
\end{eqnarray}
Here in the last step, the fact from Theorem \ref{engy-indt} that the neck energy of the spinor field $\Psi_n$ is converging to zero. We remark that in the above estimate, if there are multiple bubbles then we need to decompose
$D^+_\delta\backslash D^+_{\lambda_nR}$ further into bubble domains and neck domains and then apply the no neck energy result in Theorem \ref{engy-indt}
to each of these neck domains.

On the other hand, we fix some $\delta\in(0, \delta_1)$ small such that \eqref{almost2pi} holds and then let $n\rightarrow \infty$
to conclude that
\begin{eqnarray*}
2\pi(1+\alpha) - \frac{1+\a}{10} &\leq & \int_{D^+_{\delta}}(2V^2(x)|x|^{2\alpha}e^{2u_n}-V(x)|x|^{\alpha}e^{u_n}|\Psi_n|^2)dx+\int_{L_{\delta}}cV(x)|x|^{\alpha}e^{u_n}ds\\
&  =& -\int_{D^+_{\delta}}\Delta u_n= -\int_{\partial B_{\delta}}\frac{\partial u_n}{\partial n}\\
& \rightarrow &   -\int_{\partial D^+_{\delta}}\frac{\partial u}{\partial n} =  -\int_{D^+_{\delta}}\Delta u\\
& =& \int_{D^+_{\delta}}(2V^2(x)|x|^{2\alpha}e^{2u}-V(x)|x|^{\alpha}e^{u}|\Psi|^2)dx +\int_{L_{\delta}}cV(x)|x|^{\alpha}e^{u}ds < \frac{1+\a}{10}
\end{eqnarray*}
Here in the last step, we have used \eqref{less1/10}. Thus we get a contradiction and finish the proof of the Theorem in this case.

\
\

\noindent{\bf Case II.} $\frac{|x_n|}{\lambda_n}\rightarrow +\infty$ as $n\rightarrow +\infty$.

\
\

In this case, as in the arguments in Theorem \ref{engy-indt},  we can  rescale twice to get the bubble. First, we define the rescaling functions
\begin{equation*}
\left\{
\begin{array}{rcl}
\bar{u}_n(x)&=&u_n(|x_n|x)+(\alpha+1)\ln {|x_n|}\\
\bar{\Psi}_n(x)&=&|x_n|^{\frac
12}\Psi_n(|x_n|x)
\end{array}
\right.
\end{equation*}
for any $x\in \overline{D^+}_{\frac{\delta_1}{2|x_n|}}$. Set  $y_n:=\frac{x_n}{|x_n|}$. Due to $\bar{u}_n(y_n)\rightarrow +\infty$, we set that $\delta_n=e^{-\bar{u}_n(y_n)}$ and define the rescaling function
\begin{equation*}
\left\{
\begin{array}{rcl}
\widetilde{u}_n(x)&=&\bar{u}_n(\delta_nx+y_n)+\ln {\delta_n}\\
\widetilde{\Psi}_n(x)&=&\delta_n^{\frac 12}\bar{\Psi}_n(\delta_nx+y_n)
\end{array}
\right.
\end{equation*}
for any $\delta_nx+y_n\in \overline{D^+}_{\frac{\delta_1}{2|x_n|}}$.
Denote that $\rho_n=\frac{e^{-u_n(x_n)}}{|x_n|^\alpha}=\lambda_n(\frac{\lambda_n}{|x_n|})^\alpha $ and $x_n=(s_n, t_n)$.

\

\noindent{\bf Case II.1}   $\frac {t_n}{\rho_n}\rightarrow +\infty$ as $n\rightarrow \infty$.

\

Then, by passing to a subsequence,  $(\widetilde{u}_n,\widetilde{\Psi}_n)$ converges in  $C^{2}_{loc}(\R^2)\times C^{2}_{loc}(\Gamma(\Sigma {\R}^2))$ to some $(\widetilde
u,\widetilde \Psi)$ satisfying
\begin{equation*}
\left\{
\begin{array}{rcll}
-\Delta \widetilde{u} &=& 2V^2(0)e^{2\widetilde{u}}-V(0)e^{\widetilde u}|
\widetilde \Psi|^2, &\quad \text { in } \R^2,\\
\slashiii{D}\widetilde{\Psi} &=&
-V(0)e^{\widetilde{u}}\widetilde{\Psi},  &\quad \text { in } \R^2,\\
\end{array}
\right.
\end{equation*}
with the bubble energy
\[\int_{\R^2}(2V^2(0)e^{2\widetilde{u}}-V(0)e^{\widetilde {u}}|\widetilde{\Psi}|^2)dx=4\pi.\]

Therefore, for  $\delta\in(0, \delta_1)$ small enough, $S, R>0$ large enough  and $n$ large enough,
the fact that the neck energy of the spinor field $\Psi_n$
is converging to zero, we have
\begin{eqnarray*}
 &&  \int_{D^+_{\delta}}(2V^2(x)|x|^{2\alpha}e^{2u_n}-V(x)|x|^{\alpha}e^{u_n}|\Psi_n|^2)dx+\int_{L_{\delta}}cV(x)|x|^{\alpha}e^{u_n}ds\nonumber\\
&=&\int_{D^+_{\frac{\delta}{|x_n|}}}(2V^2(|x_n|x)|x|^{2\alpha}e^{2\bar{u}_n}-V(|x_n|x)|x|^{\alpha}e^{\bar{u}_n}|\bar{\Psi}_n|^2)dx+\int_{L_{\frac{\delta}{|x_n|}}}cV(|x_n|x)|x|^{\alpha}e^{\bar{u}_n}ds \nonumber\\
&=&\int_{D^+_{\frac{\delta}{|x_n|}}\setminus D^+_S(y_n)}(2V^2(|x_n|x)|x|^{2\alpha}e^{2\bar{u}_n}-V(|x_n|x)|x|^{\alpha}e^{\bar{u}_n}|\bar{\Psi}_n|^2)dx \nonumber\\
&&+\int_{ D^+_S(y_n)\setminus D^+_{\frac{\rho_n}{|x_n|}R}(y_n)}(2V^2(|x_n|x)|x|^{2\alpha}e^{2\bar{u}_n}-V(|x_n|x)|x|^{\alpha}e^{\bar{u}_n}|\bar{\Psi}_n|^2)dx \nonumber\\
&&+\int_{D^+_{\frac{\rho_n}{|x_n|}R}(y_n)}(2V^2(|x_n|x)|x|^{2\alpha}e^{2\bar{u}_n}-V(|x_n|x)|x|^{\alpha}e^{\bar{u}_n}|\bar{\Psi}_n|^2)dx +\int_{L_{\frac{\rho_n R}{|x_n|}}(y_n)}cV(|x_n|x)|x|^{\alpha}e^{\bar{u}_n}ds\nonumber\\
&& +\int_{L_S(y_n)\setminus L_{\frac{\rho_n R}{|x_n|}}(y_n)}cV(|x_n|x)|x|^{\alpha}e^{\bar{u}_n}ds+\int_{L_{\frac{\delta}{|x_n|}}\setminus L_{S}(y_n)}cV(|x_n|x)|x|^{\alpha}e^{\bar{u}_n}ds\nonumber\\
&\geq & \int_{D_R \cap \{t>-\frac{t_n}{\rho_n}\}}(2V^2(x_n+\rho_nx)|\frac{x_n}{|x_n|}+\frac{\rho_n}{|x_n|}x|^{2\alpha}e^{2\widetilde{u}_n(x)}
-V(x_n+\rho_nx)|\frac{x_n}{|x_n|}+\frac{\rho_n}{|x_n|}x|^{\alpha}
e^{\widetilde{u}_n(x)}|\widetilde{\Psi}_n|^2)dx \nonumber\\
& & + \int_{D_R \cap \{t=-\frac{t_n}{\rho_n}\}}(cV(x_n+\rho_nx)|\frac{x_n}{|x_n|}+\frac{\rho_n}{|x_n|}x|^{\alpha}e^{\widetilde{u}_n(x)}\nonumber\\
& & -\int_{D^+_{|x_n|S}(x_n)\backslash D^+_{\rho_n R}(x_n)}V(x)|x|^{\alpha}e^{u_n}|\Psi_n|^2 - \int_{D^+_{\frac{\delta}{|x_n|}}\setminus D^+_S(y_n)}V(t_nx)|x|^{\alpha}e^{\bar{u}_n}|\bar{\Psi}_n|^2 \nonumber\\
& \geq &  4\pi - \frac{1}{10}.
\end{eqnarray*}
Then, applying similar arguments as in {\bf Case I}, we get a contradiction, and finish the proof of the Theorem in this case.


\

\noindent{\bf Case II.2}   $\frac {t_n}{\rho_n}\rightarrow \Lambda$ as $n\rightarrow \infty$.

 \

Then, by passing to a subsequence,  $(\widetilde{u}_n,\widetilde{\Psi}_n)$ converges in  $C^{2}_{loc}(\R^2_{-\Lambda})\cap C^{1}_{loc}(\bar{\R}^2_{-\Lambda})\times C^{2}_{loc}(\Gamma(\Sigma {\R}^2_{-\Lambda}))\cap C^{1}_{loc}(\Sigma\bar{\R}^2_{-\Lambda})$ to some $(\widetilde
u,\widetilde \Psi)$ satisfying
\begin{equation*}
\left\{
\begin{array}{rcll}
-\Delta \widetilde{u} &=& 2V^2(0)e^{2\widetilde{u}}-V(0)e^{\widetilde u}|
\widetilde \Psi|^2, &\quad \text { in } \R^2_{-\Lambda},\\
\slashiii{D}\widetilde{\Psi} &=&
-V(0)e^{\widetilde{u}}\widetilde{\Psi},  &\quad \text { in } \R^2_{-\Lambda},\\
\frac{\partial \widetilde{u}}{\partial
n}&=&cV(0) e^{\widetilde{u}}, &\quad  \text{ on } \partial \R^2_{-\Lambda}
,\\
B\widetilde{\Psi} &=& 0, & \quad \text{ on } \partial \R^2_{-\Lambda},
\end{array}
\right.
\end{equation*}
with the bubble energy
\[\int_{\R^2_{-\Lambda}}(2V^2(0)e^{2\widetilde{u}}-V(0)e^{\widetilde {u}}|\widetilde{\Psi}|^2)dx+\int_{\partial \R^2_{-\Lambda}}cV(0)e^{\widetilde{u}}d\sigma=2\pi.\]
Then, applying similar arguments as in {\bf Case II.1},  we can get a contradiction, and finish the proof of the Theorem.
\hfill{$\square$}

\
\

\section{Blow-up Value}\label{value}

By means of Theorem \ref{mainthm-1}, we can further compute the blow-up value at the blow-up point $p$, which is defined as
\[m(p)=\lim_{\delta\rightarrow 0}\lim_{n\rightarrow \infty}\{\int_{D^+_\delta(p)}(2V^2(x)|x|^{2\alpha}e^{2u_n}-V(x)|x|^{\alpha}e^{u_n}|\Psi_n|^2)dx+\int_{L_{\delta}(p)}cV(x)|x|^{\alpha}e^{u_n}ds\}.\]
We know from Theorem \ref{mainthm-1} that $m(p)> 0$. Now we shall determine the precise value of $m(p)$ under a boundary condition.

\

\noindent{\bf Proof of Theorem \ref{BV}:}
When $p\notin L_{\delta_0}(p)$, It is clear that we can choose $\delta_0$ sufficiently small such that $
\overline{D_{\delta_0}^+(p)}=\overline{D_{\delta_0}(p)}$. Then we have $m(p)=4\pi$ according to the arguments in \cite{JZZ3}. Next we assume that  $p\in L_{\delta_0}(p)$. Without loss of generality, we assume $p=0$. The case of $p\neq 0$ can be handled analogously.

By using the boundary condition, it follows that
$$0\leq u_n-\min_{ S^+_{\delta_0}}u_n\leq C$$
on $S^+_{\delta_0}$. Let $w_n$ be the unique solution of the following problem
\begin{equation*}
\left\{
\begin{array}{rlll}
-\Delta w_n &=& 0, &\text{in } D^+_{\delta_0},\\
\frac{\partial w_n}{\partial n} &=& 0, &\text{on }L_{\delta_0},\\
w_n &=& u_n-\min_{S^+_{\delta_0}}u_n, &\text{on }S^+_{\delta_0}.
\end{array}
\right.
\end{equation*}
It follows from the maximum principle and the Hopf Lemma that  $w_n$ is uniformly bounded
in $\overline {D^+_{\delta_0}}$,  and consequently $w_n$ is $C^2(D^+_{\delta_0})\cap C^1(D^+_{\delta_0}\cup L_{\delta_0})$. Now we set that $v_n=u_n-\min_{S^+_{\delta_0}}u_n-w_n$. Then $v_n$ satisfies that
\begin{equation*}
\left\{
\begin{array}{lcl}
-\Delta v_n=2V^2(x)|x|^{2\alpha}e^{2u_n}-V(x)|x|^{\alpha}e^{u_n}|\Psi_n|^2, &\text{in } D^+_{\delta_0},\\
\frac{\partial v_n}{\partial n}=cV(x)|x|^{\alpha}e^{u_n}, &\text{on }L_{\delta_0},\\
v_n=0, &\text{on }S^+_{\delta_0},
\end{array}
\right.
\end{equation*}
with the energy condition
\begin{equation}\label{8.3}
\int_{D^+_{\delta_0}}(2V^2(x)|x|^{2\alpha}e^{2u_n}-V(x)|x|^{\alpha}e^{u_n}|\Psi_n|^2)dx+\int_{L_{\delta_0}}cV(x)|x|^{\alpha}e^{u_n}ds\leq C.
\end{equation}
By  Green's representation formula, we have
\bee
v_n(x) &=& \frac 1{\pi}\int_{D^+_{\delta_0}}\log\frac
1{|x-y|}(2V^2(y)|y|^{2\alpha}e^{2u_n}-V(y)|y|^{\alpha}e^{u_n}|\Psi_n|^2)dy  \nn \\
&& +\frac 1{\pi}\int_{L_{\delta_0}}\log\frac
1{|x-y|}cV(y)|y|^{\alpha}e^{u_n}dy+R_n(x)
\eee
where $R_n(x) \in C^1 (D^+_{\delta_0}\cup L_{\delta_0})$ is a regular term. By using Theorem \ref{mainthm-1}, we know
\begin{equation}\label{8.2}
v_n(x)\rightarrow \frac{m(p)}{\pi}\ln\frac{1}{|x|}+R(x), \text{ in } C^{1}_{loc}((D^+_{\delta_0}\cup L_{\delta_0})\setminus\{0\})
\end{equation}
for $R(x)\in C^1 (D^+_{\delta_0}\cup L_{\delta_0})$.
On the other hand, we observe that $(v_n,\Psi_n)$ satisfies
\begin{equation*}
\left\{
\begin{array}{lcll}
-\Delta v_n &=& 2K_n^2(x)|x|^{2\alpha}e^{2v_n}-K_n(x)|x|^{\alpha}e^{v_n}|\Psi_n|^2, &\text{in } D^+_{\delta_0},\\
\slashiii{D}{\Psi_n} &=&-K_n(x)e^{v_n}{\Psi_n}, &\text{in } D^+_{\delta_0},\\
\frac{\partial v_n}{\partial n}&=&cK_n(x)|x|^{\alpha}e^{u_n}, &\text{on }L_{\delta_0},\\
B(\Psi_n)&=&0, &\text{on }L_{\delta_0},
\end{array}
\right.
\end{equation*}
where $K_n=V(x)e^{\min_{S^+_{\delta_0}}u_n+w_n}$. Noticing  the Pohozaev identity of $(v_n,\Psi_n)$ in $D^+_{\delta_0}$ for $0<\delta<\delta_0$ is
\begin{eqnarray}\label{poho-2}
&&\delta \int_{S^+_\delta} |\frac {\partial v_n}{\partial
\nu}|^2-\frac
12|\nabla v_n|^2d\sigma \nonumber\\
&=& (1+\alpha)\{\int_{D^+_\delta}(2K_n^2(x)|x|^{2\alpha}e^{2v_n}-K_n(x)|x|^{\alpha}e^{v_n}|\Psi_n|^2)dv+
\int_{L_{\delta}}cK_n(x)|x|^{\alpha}e^{v_n}ds \} \nonumber\\
& & -\delta\int_{S^+_{\delta}}K_n^2(x)|x|^{2\alpha}e^{2v_n}d\sigma +\int_{L_{\delta}}c\frac {\partial K_n(s,0)}{\partial s}|s|^{\alpha}se^{v_n(s,0)}ds-cK_n(s,0)|s|^{\alpha}se^{v_n(s,0)}|^{s=\delta}_{s=-\delta} \nonumber\\
& &+\int_{D^+_\delta}x\cdot \nabla (K_n^2(x))|x|^{2\alpha}e^{2v_n}dv -\int_{D^+_\delta}x\cdot \nabla
K_n(x)|x|^{\alpha}e^{v_n}|\Psi_n|^2dv \nonumber\\
& & + \frac 14\int_{S^+_{\delta}}\la\frac {\partial
\Psi_n}{\partial \nu},(x+\bar{x})\cdot\Psi_n\ra d\sigma+\frac
14\int_{S^+_\delta}\la (x+\bar{x})\cdot\Psi_n, \frac
{\partial \Psi_n}{\partial \nu}\ra d\sigma.
\end{eqnarray}
We will take $n\rightarrow \infty $ first and then $\delta\rightarrow 0$ in (\ref{poho-2}). By using (\ref{8.2}) we get
$$
\lim_{\delta\rightarrow 0}\lim_{n\rightarrow \infty}\delta\int_{S^+_\delta}
|\frac {\partial v_n}{\partial \nu}|^2-\frac 12|\nabla
v_n|^2d\sigma=\lim_{\delta\rightarrow 0}\delta\int_{S^+_\delta}\frac 12 |\frac
{\partial (\frac{m(p)}{\pi}\ln\frac{1}{|x|})}{\partial \nu}|^2d\sigma=\frac
1{2\pi}m^2(p).
$$
By using $u_n\rightarrow -\infty$ uniformly on $S^+_{\delta}$, we also have
$$
\lim_{\delta\rightarrow 0}\lim_{n\rightarrow \infty} \delta\int_{S^+_\delta}K_n^2(x)|x|^{2\alpha}e^{2v_n}d\sigma =\lim_{\delta\rightarrow 0}\lim_{n\rightarrow \infty}
\delta\int_{S^+_\delta}V^2(x)|x|^{2\alpha}e^{2u_n}d\sigma=0,
$$
and
$$\lim_{\delta\rightarrow 0}\lim_{n\rightarrow \infty} cK_n(s,0)|s|^{\alpha}se^{v_n(s,0)}|^{s=\delta}_{s=-\delta} =0. $$
By using the energy condition (\ref{8.3}),  we have
$$
\lim_{\delta\rightarrow 0}\lim_{n\rightarrow \infty}\int_{D^+_\delta}(|x|^{2\alpha}e^{2v_n}x\cdot\nabla (K_n^2(x))-|x|^{\alpha}e^{v_n}|\Psi_n|^2x\cdot \nabla
K_n(x))dx=0,
$$
and
$$
\lim_{\delta\rightarrow 0}\lim_{n\rightarrow \infty}\int_{L_{\delta}}c\frac {\partial K_n(s,0)}{\partial s}|s|^{\alpha}se^{v_n(s,0)}ds=0.
$$
Since $u_n\rightarrow -\infty$ uniformly in any compact subset of $(D^+_{\delta_0}\cup L_{\delta_0} )\backslash \{0\}$, and  $|\Psi_n|$ is uniformly bounded in any compact subset of $(D^+_{\delta_0}\cup L_{\delta_0}) \backslash \{0\}$, we know
\begin{equation*}
\left\{
\begin{array}{rcll}
\slashiii{D}\Psi &=& 0, & \text { in } D^+_{\delta_0},\\
B \Psi &=& 0, &\text { on } L_{\delta_0}\setminus\{0\}.
\end{array}
\right.
\end{equation*}
 We extend $\Psi$ a harmonic
 spinor $\overline{\Psi}$ on $D_{\delta_0}\setminus\{0\}$ with bounded
 energy, i.e., $||\overline{\Psi}||_{L^4(D_{\delta_0})} < \infty$. Since the local singularity of a harmonic spinor with finite energy is removable, we have $\overline{\Psi}$ is smooth in $D_{\delta_0}$.
It follows that $\Psi$ is smooth in $D^+_{\delta_0}\cup L_{\delta_0}$.
Therefore we obtain that
$$
\lim_{\delta\rightarrow 0}\lim_{n\rightarrow \infty} (\frac 14\int_{S^+_{\delta}}\la\frac {\partial
\Psi_n}{\partial \nu},(x+\bar{x})\cdot\Psi_n\ra d\sigma+\frac
14\int_{S^+_\delta}\la (x+\bar{x})\cdot\Psi_n, \frac
{\partial \Psi_n}{\partial \nu}\ra d\sigma)=0.
$$
Putting all together, we obtain that
$$ \frac{1}{2\pi}m^2(p)=(1+\alpha)m(p).$$ It follows that
$m(p)=2\pi(1+\alpha)$. Thus we finish the proof of Theorem \ref{BV}.
\hfill{$\square$}

\

\section{Energy quantization for the global super-Liouville boundary problem}\label{global}

In this section, we will show the quantization of energy for a sequence of blowing-up solutions to the global super-Liouville boundary problem on a singular Riemann surface.
Let $(M, \A, g)$ be a compact Riemann surface  with conical singularities represented by the divisor $\A=\Sigma_{j=1}^{m}\alpha_j q_j$, $\alpha_j>0$ and with a spin structure. We assume that  $\partial M $ is not empty and $(M, g)$ has conical
singular points $q_1, q_2, \cdots, q_m$ such that $q_1, q_2, \cdots, q_l$ are in $M^o$ for $1\leq l<m$ and $q_{l+1}, q_{l+2}, \cdots, q_m$ are on $\partial M$.   Writing $g=e^{2\phi}g_0$, where $g_0$ is a smooth metric on $M$, we can deduce from the results for the local super-Liouville equations:

\
\

\noindent{\bf Proof of Theorem \ref{thmsin}}:
Since $g=e^{2\phi}g_0$ with $g_0$ being smooth, then by the well known properties of $\phi$ (see e.g. \cite{T1} or \cite{BDM}, p. 5639), we know that $(u_n,\psi_n)$ satisfies
\begin{equation*}
\left\{
\begin{array}{rclr}
-\Delta_{g_0} (u_n+\phi) &=&  2e^{2(u_n+\phi)}-e^{u_n+\phi}\left\langle e^{\frac {\phi} 2}\psi_n ,e^{\frac {\phi} 2}\psi_n\right\rangle_{g_0} -K_{g_0}-\sum_{j=1}^l  2\pi\alpha_j \delta_{q_j}   &\qquad  \text {in } M^o,\\
\slashiii{D}_{g_0}(e^{\frac {\phi} 2}\psi_n) &=& -e^{u_n+\phi}(e^{\frac {\phi} 2}\psi_n) &\qquad \text {in } M^o,\\
\frac {\partial (u_n+\phi)}{\partial n}& = & ce^{u_n+\phi}-h_{g_0}+\sum_{j=l+1}^m  \pi\alpha_j \delta_{q_j}  ,\qquad & \text { on } \partial M,\\
B^{\pm}(e^{\frac {\phi} 2}\psi_n)&=& 0,\qquad & \text { on } \partial M,
\end{array}
\right.
\end{equation*}
with the energy conditions:
$$
\int_{M}e^{2(u_n+\phi)}dg_0+|e^{\frac {\phi} {2} }\psi_n|_{g_0}^4dv_{g_0}+\int_{\partial M}e^{u_n+\phi}d\sigma_{g_0}<C.
$$
If we define the blow-up set of $u_n + \phi$ as
$$
\Sigma '_1=\left\{ x\in M,\text{ there is a sequence
}y_n\rightarrow x\text{ such that }(u_n+\phi)(y_n)\rightarrow +\infty
\right\}, $$
then by Remark \ref{rem3.4} and Remark 3.3 in \cite{JZZ3}, we have  $\Sigma_1=\Sigma'_1$. By the blow-up results of the local sytem, it follows that one of the following alternatives holds:
\begin{enumerate}
\item[i)]  $u_n$ is bounded in $L^{\infty}(M).$

\item[ii)]  $u_n$ $\rightarrow -\infty $ uniformly on $M$.

\item[iii)]  $\Sigma _1$ is finite, nonempty and
\begin{equation*}
u_n\rightarrow -\infty \text{ uniformly on compact subsets of
}M\backslash \Sigma _1.
\end{equation*}
Furthermore,
\begin{equation*}
\int_M(2e^{2(u_n+\phi)}-e^{u_n+\phi}|e^{\frac{\phi}{2}}\psi_n|_{g_0}^2)\varphi dv_{g_0}+\int_{\partial M}ce^{u_n+\phi}\varphi d\sigma_{g_0}\rightarrow \sum_{p_i\in \Sigma_1}m(p_i)\varphi(p_i)
\end{equation*} for any smooth function $\varphi$ on $M$.
\end{enumerate}
Next let $p=\frac q{q-1}>2$. Notice that
\begin{eqnarray*}
& & ||\nabla (u_n+\phi)||_{L^q(M,g_0)}\\
& \leq & \sup\{|\int_M\nabla (u_n+\phi)\nabla \varphi
dv_{g_0}||\varphi\in W^{1,p}(M,g_0),\int_{M}\varphi
dv_{g_0}=0,||\varphi||_{W^{1,p}(M,g_0)}=1\}.
\end{eqnarray*}
Due to $||\varphi ||_{L^{\infty}(M,g_0)\leq C}$ for any $\varphi\in W^{1,p}(M,g_0)$ with $\int_{M}\varphi
dv_{g_0}=0$ and $||\varphi||_{W^{1,p}(M,g_0)}=1$
by the Sobolev embedding theorem, we get that
\bee
& & |\int_{M}\nabla (u_n+\phi)\nabla\varphi dv_{g_0}| \nn\\
&= &  |-\int_M\Delta_{g_0} (u_n+\phi)\varphi dv_{g_0}+\int_{\partial M}\frac{\partial (u_n+\phi)}{\partial n}\varphi d\sigma_{g_0} |  \nn \\
&\leq& \int_{M}(2e^{2(u_n+\phi)}+e^{u_n+\phi}|e^{\frac {\phi} {2}} \psi_n|_{g_0}^2 + |K_{g_0}|)|\varphi|dg_0 +\int_{\partial M}( ce^{u_n+\phi}+|h_{g_0}|)|\varphi|d\sigma_{g_0} \nn\\
& &+ \sum_{j=1}^l  |\int_M 2\pi\alpha_j \delta_{q_j}\varphi dv_{g_0}|+ \sum_{j=1+1}^m  |\int_{\partial M} \pi\alpha_j \delta_{q_j} \varphi d\sigma_{g_0}|\nn \\
& \leq &  C.  \nn
\eee
This means that $u_n+\phi-\frac{1}{|M|}\int_{M}(u_n+\phi)dv_{g_0}$ is uniformly bounded in $W^{1,q}(M,g_0)$.

\
\

We define the Green function $G$ by
\begin{equation*}
\left \{\begin{array}{l} - \Delta_{g_0}
G=\sum_{p\in M^o\cap \Sigma_1}m(p)\delta_p-K_{g_0}-\sum_{j=1}^l 2\pi\alpha_j \delta_{q_j},\\
\frac{\partial G}{\partial n}=\sum_{p\in \partial M\cap \Sigma_1}m(p)\delta_p-h_{g_0}+\sum_{j=l+1}^m \pi\alpha_j \delta_{q_j},\\
\int_{M}Gdv_{g_0}=0.
\end{array}\right.
\end{equation*}
It is clear that  $G\in W^{1,q}(M,g_0)\cap C_{loc}^2(M\backslash \Sigma_1)$
with $\int_{M}Gdg_0=0$ for $1<q<2$.

Now we take  $R>0$ small such that, at each blow-up point $p\in \Sigma_1$, the geodesic ball of $M$, $B^{M}_R(p)$, satisfies $B^{M}_R(p)\cap (\Sigma_1 \cup \{q_1,q_2, \cdots, q_m\})=\{p\}$. Then we also have
\begin{equation*}
G(x)=\left\{
\begin{array}{lcl}
- \frac 1{2\pi}m(p)\log d(x,p)+g(x),  \quad &{\text if}&  p\in  M^o\cap (\Sigma_1 \setminus \{q_1,q_2, \cdots, q_m\}),  \\
- (\frac 1{2\pi}m(p)- \alpha_j )\log d(x,p)+g(x),  \quad    &{\text if}&  p=q_j\in  M^o\cap \Sigma_1 \cap \{q_1,q_2, \cdots, q_l\}, \\
- (\frac 1{\pi}m(p))\log d(x,p)+g(x),  \quad    &{\text if}&  p\in  \partial M\cap (\Sigma_1 \setminus \{q_{l+1},q_{l+2}, \cdots, q_m\}), \\
- (\frac 1{\pi}m(p)+a_j)\log d(x,p)+g(x),  \quad    &{\text if}&  p=q_j\in  \partial M\cap \Sigma_1 \cap  \{q_{l+1},q_{l+2}, \cdots, q_m\},
\end{array}
\right.
\end{equation*}
for $x\in B^M_R(p)\backslash \{p\}$ with $g\in C^2(B^M_R(p))$, where $d(x,p)$ denotes the Riemannian distance between $x$ and $p$ with respect to $g_0$ and
$$m(p)=\lim_{R\rightarrow 0}\lim_{n\rightarrow \infty}\{\int_{B^M_R(p)}(2e^{2(u_n+\phi)}-e^{u_n+\phi}|e^{\frac{\phi}{2}}\psi_n|_{g_0}^2-K_{g_0})dv_{g_0}+\int_{\partial M\cap B^M_R(p)}(ce^{u_n+\varphi}-h_{g_0})d\sigma_{g_0}\}.
$$

On the other  hand, since for any $\varphi\in C^{\infty}(M)$
\begin{eqnarray*}
& & \int_{M}\nabla (u_n+\phi-G)\nabla
\varphi dv_{g_0} \\
&=&  -\int_{M}\Delta_{g_0}(u_n+\phi-G)\varphi dv_{g_0}+\int_{\partial M}\frac{\partial (u_n+\phi-G)}{\partial n}\varphi d\sigma_{g_0} \\
&=& \int_{M}( 2e^{2(u_n+\phi)}-e^{u_n+\phi}|e^{\frac {\phi} 2}\psi_n|^2_{g_0}-\sum_{p\in
M^o\cap \Sigma_1}m(p)\delta_p)\varphi dv_{g_0}+\int_{\partial M}(ce^{u_n+\phi}-\sum_{p\in \partial M\cap
\Sigma_1}m(p)\delta_p)\varphi d\sigma_{g_0}\\
& \rightarrow & 0,\quad \text { as }
n\rightarrow \infty,
\end{eqnarray*}
by using the fact that $u_n+\phi-\frac{1}{|M|}\int_{M}(u_n+\phi)dg_0$ is uniformly bounded in $W^{1,q}(M,g_0)$, we get
$$
u_n+\phi-\frac 1{|M|}\int_{M}(u_n+\phi)dg_0\rightarrow G
$$
stronly in $C_{loc}^2(M\backslash \Sigma_1)$ and weakly in $W^{1,q}(M,g_0)$. Consequently we have
\begin{equation*}
 \max_{M^o\cap \partial B^M_{R}(p)}u_n-\min_{M^o\cap \partial B^M_{R}(p)}u_n\leq C.\\
\end{equation*}
Therefore we get the blow-up value $m(p)=4\pi$ when $p\in  M^o\cap (\Sigma_1 \setminus \{q_1,q_2, \cdots, q_m\})$,  $m(p)=4\pi(1+\alpha_j)$ when $p=q_j\in  M^o\cap \Sigma_1 \cap \{q_1,q_2, \cdots, q_l\}$, $m(p)=2\pi$ when $p\in  \partial M\cap (\Sigma_1 \setminus \{q_{l+1},q_{l+2}, \cdots, q_m\})$, and $m(p)=2\pi(1+\alpha_j)$ when $p=q_j\in  \partial M\cap \Sigma_1 \cap  \{q_{l+1},q_{l+2}, \cdots, q_m\}$.
By using that
$$\int_{M}2e^{2u_n}-e^{u_n}|\psi_n|^2_gdv_g+\int_{\partial M}ce^{u_n}d\sigma_{g}=\int_{M}2e^{2(u_n+\phi)}-e^{u_n+\phi}|e^{\frac{\phi}2}\psi_n|^2_{g_0}dv_{g_0}+\int_{\partial M}ce^{u_n}d\sigma_{g_0},$$
we get the conclusion of the Theorem.
\hfill{$\square$}

\section*{\bf{Acknowledgements}} Chunqin Zhou was supported partially by NSFC of China (No. 11771285). Miaomiao Zhu was supported in part by National Natural Science Foundation of China (No. 11601325).


\begin{thebibliography}{JWZZ1}


\bibitem[ARS]{ARS} C. Ahn, C. Rim, M. Stanishkov, {\em Exact one-point function of $N=1$ super-Liouville theory with boundary}, Nucl. Phys. B  636 [FS] (2002) 497-513.



\bibitem[BM]{BM} H. Brezis,  F. Merle, {\em Uniform estimates and blow-up
behavior for solutions of $-\Delta u=V(x)e^{u\text{ }}$ in two
dimensions.} Comm. Partial Differential Equations 16 (1991), no.
8-9, 1223-1253.

\bibitem[B]{B} D. Bartolucci, {\em A compactness result for periodic multivortices
in the electroweak theory}, Nonlinear Analysis 53 (2003) 277-297.

\bibitem[BCLT]{BCLT} D. Bartolucci, C. C. Chen, C.-S. Lin, G. Tarantello, {\em Profile of blow-up solutions to mean field equations with singular data}, Comm. PDE 29 (2004) 1241-1265.

\bibitem[BDM]{BDM} D. Bartolucci, F. De Marchis, A. Malchiodi, {\em Supercritical conformal metrics on surfaces with conical singularities}, Int. Math. Res. Not. 24 (2011), 5625-5643.

\bibitem[BaMo] {BaMo} D. Bartolucci, E. Montefusco, Blow-up analysis, existence and qualitative
properties of solutions of the two-dimensional Emden-Fowler equation with
singular potential. {\em Math. Methods Appl. Sci.,} 30 (2007), pp. 2309-2327.


\bibitem[BT]{BT} D. Bartolucci, G. Tarantello, {\em Liouville type equations with singular data and their applications to periodic multivortices for the electroweak theory,} Commun. Math. Phys. 229 (2002) 3-47.

\bibitem[BT1]{BT1} D. Bartolucci, G. Tarantello, {\em The Liouville Equationwith Singular Data: A Concentration-Compactness Principle via a Local Representation Formula}, Journal of Differential Equations 185, 161-180 (2002).

\bibitem[BWZ]{BWZ} J. Bao, L. H. Wang, C. Q. Zhou, {\em Blow-up analysis for solutions to Neumann boundary value problem}. J. Math. Anal. Appl. 418 (2014), 142-162.



\bibitem[DJLW]{DJLW} W. Y. Ding, J. Jost, J. Y. Li, G. F. Wang, {\em Existence results for mean field equations}, Ann. Inst. Henri Poincare, Anal. Non Lineaire 16 (1999), 653-666.

\bibitem[DM]{DM} Z. Djadli, A. Malchildi, {\em Existence of conformal metrics with constant Q-curvature}, Ann. Math. 168 (2008), 813-858.

\bibitem[CL1]{CL1} W. Chen,  C. Li, {\em Prescribing Gaussian Curvatures on Surfaces with Conical Singularities}, J.Geom.Anal. 1 (1991) 359-372.


\bibitem[CL2]{CL2} W. Chen,  C. Li, {\em Classification of solutions of
some nonlinear elliptic equations,} Duke Math. J. 63 (1991), 615-623.

\bibitem[DT]{DT} W. Y. Ding, G. Tian, {\em Energy identity for a class
of approximate harmonic maps from surfaces}, Comm. Anal. Geom.
3(1996), 543-554.





\bibitem[FH]{FH} T. Fukuda, K, Hosomichi, {\em Super-Liouville theory with boundary}, Nucl. Phys, B 635 (2002) 215-254.

\bibitem[GL]{GL}Y. X. Guo, J. Q. Liou, {\em Blow-up analysis for solutions of the laplacian equation with exponential Neumann boundary condition in dimension two}. Commun. Contemp. Math. 8 (2006), 737-761.



\bibitem[HMR]{HMR} O. Hijazi, S. Montiel, A. Rold$\acute{a}$n, {\em Eigenvalue boundary problems for the Dirac
operator.} Comm.Math.Phys. 231(2002), no.3, 375-390.

\bibitem[Jo]{Jo}  J. Jost, {\em Riemannian Geometry and geometric analysis,} 6th
edition, Springer, 2011.

\bibitem[JWZ1]{JWZ1} J. Jost, G. Wang and C. Q. Zhou, {\em super-Liouville equations on closed Riemann
surface}, Comm. PDE, 32 (2007) 1103-1128.

\bibitem[JWZ2]{JWZ2} J. Jost, G. Wang and C. Q. Zhou, {\em Metrics of constant curvature
on a Riemann surface with two corners on the boundary}, Ann. I.H. Poincare-AN,  26 (2009) 437-456.

\bibitem[JWZZ1]{JWZZ1} J. Jost, G. Wang, C. Q. Zhou,  M. M. Zhu, {\em
Energy identities and blow-up analysis for solutions of the super
Liouville equation}, J. Math. Pures Appl. 92 (2009), 295-312.

\bibitem[JWZZ2]{JWZZ2} J. Jost, G. Wang, C. Q. Zhou,  M. M. Zhu, {\em The boundary value problem for the super-Liouville equation}, Ann. Inst. H. Poincare Anal. Non Lineaire. Volume 31, Issue 4, 2014, 685-706.

\bibitem[JZZ1]{JZZ1} J. Jost, C. Q. Zhou, M. M. Zhu, {\em The qualitative boundary behavior of blow-up solutions of
the super-Liouville equations},  J. Math. Pures Appl. 101 (2014), no. 5, 689-715.

\bibitem[JZZ2]{JZZ2} J. Jost, C. Q. Zhou, M. M. Zhu, {\em A local estimate for the super-Liouville equations on closed Riemann surfaces}, Calc. Var. Partial Differ. Equ., Volume 53, Issue 1-2, 2015, 247-264.

\bibitem[JZZ3]{JZZ3} J. Jost, C. Q. Zhou,  M. M. Zhu, {\em Vanishing pohozaev constant and removability of singularities}, J. Differential Geometry, Vol. 111, No. 1 (2019), pp. 91-144.

\bibitem[LM]{LM} H. B. Lawson,  M. Michelsohn, {\em  Spin geometry}. Princeton Mathe. Series, 38. Princeton University Press, Princeton (1989).


\bibitem[LS]{LS} Y. Y. Li, I. Shafrir, {\em Blow-up analysis for solutions of $-\Delta u=Ve\sp u$ in dimension two,} Indiana Univ. Math. J., \textbf{43} (1994), 1255-1270.

\bibitem[Li]{Li} Y. Y. Li, {\em Harnack type inequality: The method of moving planes}, Comm. Math. Phys. 200(1999), 421-444.

\bibitem[Lion]{Lions} P. L. Lions, The concentration-compactness principle in the calculus of variations. The limit case, part I. Revista Matem\'{a}tica Iberoamericana, 1(1), 1985, 145-201.

\bibitem[Liou]{L} J. Liouville, {\em Sur l'\' equation aux diff\' erences partielles
$\frac{d^2}{dudv}\log\lambda\pm \frac {\lambda}{2a^2}=0$,}
J. Math. Pures Appl. {\bf 18},  71 (1853).


\bibitem[Pr]{Pr} J. N. G. N. Prata, {\em The super-Liouville equation on the half-line}, Phys. Lett. B 405 (1997) 271-279.


\bibitem[St]{St} M. Struwe, {\em Variational methods}, Springer, 4th ed., 2008.

\bibitem[Ta]{Ta} G. Tarantello, {\em A quantization property for blow-up solutions of singular Liouville-type equations}, Journal of Functional Analysis 219 (2005) 368-399.

\bibitem[T1]{T1} M. Troyanov, {\em Prescribing curvature on compact
surfaces with conical singularities}, Trans. Amer. Math. Soc.
324 (1991), 793-821.

\bibitem[T2]{T2} M. Troyanov, {\em Metrics of constant curvature on a sphere with two conical singularities},
Differential geometry (Pe\={n}\'{i}scola, 1988), 296-306, Lecture Notes in Math., 1410, Springer, Berlin, 1989.

\bibitem[PW]{PW} T. H. Parker and J. G. Wolfson, {\em Pseudo-holomorphic
maps and bubble trees}, J. Geom. Anal.3 (1993), 63-98.

\bibitem[Ye]{Ye} R. G. Ye, {\em Gromov's compactness theorem for pseudo-holomorphic curves}, Trans. Am. Math. Soc. 342 (2), 671-694 (1994).


\bibitem[Z1]{Z1} M. M. Zhu, {\em Dirac-harmonic maps from degenerating spin surfaces I: the Neveu-Schwarz case}, Calc. Var. Partial Differ. Equ. 35 (2009), no. 2, 169-189.

\bibitem[Z2]{Z2} M. M. Zhu, {\em Quantization for a nonlinear Dirac equation}, Proc. Amer. Math. Soc., 144 (2016), no. 10, 4533-4544.


\bibitem[ZZ]{ZZ} T. Zhang, C. Q. Zhou, {\em Liouville type equation with exponential Neumann boundary condition and with singular data}.  Calc. Var. Partial Differential Equations, 57 (2018), 163-195.


\bibitem[ZZZ]{ZZZ}T. Zhang, C. L. Zhou, C. Q. Zhou, {\em Quantization of the blow-up value for the Liouville equation with exponential Neumann boundary condition}. Commun. Math. Stat. 6 (2018) 29-48.











\end{thebibliography}
\end{document}